\title[H\"older continuity]{H\"older continuity of core entropy for non-recurrent quadratic polynomials}
\author{Malte Ha{\ss}ler}
\address{Cornell University, Department of Mathematics, Malott Hall, 212 Garden Avenue, 14853 Ithaca, USA}
\email{mh2479@cornell.edu}
\author{Dierk Schleicher}
\address{Aix-Marseille Universit\'e and CNRS, UMR 7373, Institut de Math\'ematiques de Marseille, 163 Avenue de Luminy Case 901, 13009 Marseille, France. 
}
\email{dierk.schleicher@univ-amu.fr}
\thanks{This work was funded through the ERC grant 695621 \textit{HOLOGRAM}}
\theoremstyle{plain}
\newtheorem{theorem}{Theorem}
\newtheorem{lemma}[theorem]{Lemma}
\newtheorem{proposition}[theorem]{Proposition}
\newtheorem{corollary}[theorem]{Corollary} 
\newtheorem*{conjecture}{Conjecture} 
\newtheorem*{MainTheorem}{Main Theorem}
\theoremstyle{definition}
\newtheorem{definition}[theorem]{Definition}
\newtheorem*{remark}{Remark}
\newcounter{reminder}
\newcommand{\Hide}[1]{}
\newcommand{\hide}[1]{}
\renewcommand{\theta}{\vartheta}
\renewcommand{\phi}{\varphi}
\newcommand{\0}{\mathtt{0}}
\newcommand{\1}{\mathtt{1}}
\newcommand{\K}{\mathbb K}
\newcommand{\e}{e}
\newcommand{\up}[1]{{{#1}_+}}
\newcommand{\low}[1]{{{#1}_-}}
\newcommand{\sym}{\{\0,\1\}}
\newcommand{\lo}{\low}
\newcommand{\orb}{\text{orb}}
\newcommand{\Newpage}
\newcommand{\eps}{\varepsilon}
\newcommand{\C}{\mathbb C}
\newcommand{\Z}{\mathbb Z}
\newcommand{\R}{\mathbb R}
\newcommand{\N}{{\mathbb N}}
\newcommand{\M}{\mathcal{M}}
\newcommand{\Hub}{\mathcal{H}}
\newcommand{\sm}{\setminus}
\newcommand{\ovl}[1]{\overline{#1}}
\newcommand{\Circle}{\mathbb S^1}
\newcommand{\Cbar}{\ovl \C}
\newcommand{\disk}{\mathbb D}
\newcommand{\diskbar}{\ovl\disk}
\newcommand{\LM}{\mathcal{LM}}
\numberwithin{theorem}{section}
\numberwithin{equation}{section}
\newcommand{\diff}{\operatorname{diff}}
\newcommand{\Diff}{\operatorname{Diff}}
\begin{document}

\maketitle

\begin{abstract}
We prove that core entropy is H\"older continuous as a function of external angles for a large class of quadratic polynomials that are non-recurrent with respect to angle-doubling, in particular all of them that exhibit a finite Hubbard tree. The result follows from a symbolic analysis of the Mandelbrot set and the dynamics of Hubbard trees in terms of kneading sequences which has been established in previous work. 
\end{abstract}
\medskip
\small \textbf{Keywords:} \textit{Complex and symbolic dynamics,  topological entropy, renormalization}

\section{Introduction}

Entropy (more precisely, topological entropy) is a quantitative measure of the complexity of dynamical systems, essentially describing the exponential growth rate of the number of ``substantially different'' finite orbits of length $n$. It is closely related to concepts in physics and information theory, where it also measures the exponential growth rate of the number of configurations.

Specifically in real and complex dynamics, topological entropy has been investigated for a variety of classes of dynamical systems. In complex dynamics, Lyubich~\cite{LyubichEntropyRussian, LyubichEntropy} proved that topological entropy for rational maps of degree $d$ always equals $\log d$, so it only depends on the degree. In contrast, in real dynamics topological entropy is often considered for the restriction to a ``dynamical interval'', an invariant interval that contains the interesting dynamics. In this context, topological entropy is not constant, and questions such as monotonicity and continuity with respect to a parameter are of interest.

Thurston introduced the concept of \emph{core entropy} for (postcritically finite) polynomials as the topological entropy of the restriction to the Hubbard tree, an invariant subset of the dynamical plane that, like the dynamical interval for real maps, captures the interesting dynamics. He raised the issue whether core entropy can be extended to polynomials beyond postcritically finite ones, and whether such an extension could be continuous. 

The first extension to general polynomials had been investigated under the name of \emph{biaccessibility dimension} in \cite{HenkBiaccDim,PhilippDierk} even before Thurston raised the question and discovered the connection. Continuity of core entropy has been proved for quadratic polynomials independently in \cite{DimaEntropy} and \cite{TiozzoContinuity}, and for general polynomials in \cite{GaoTiozzo}.

Specifically for quadratic polynomials, the following conjecture has been raised quite a while ago:

\begin{conjecture}[H\"older continuity of core entropy]
The continuous dependence of core entropy on the polynomial is H\"older (with respect to an appropriate measure on the space of polynomials), such that the H\"older exponent is equal (up to a scaling constant) to core entropy itself; in particular, H\"older continuity fails when core entropy is zero.
\end{conjecture}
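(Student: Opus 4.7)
The plan is to work entirely on the symbolic side. Every quadratic polynomial corresponds, via the Douady--Hubbard parametrization, to an external angle $\theta\in\Circle$, and for non-recurrent parameters the kneading sequence $K(\theta)\in\sym^{\N}$ encodes a finite Hubbard tree together with a finite Markov partition. I would first express the core entropy as $h(\theta)=\log\lambda(\theta)$, where $\lambda(\theta)$ is the spectral radius of the incidence matrix $A(\theta)$ of this partition; this is the standard identification of topological entropy for a piecewise-monotone tree map with the leading eigenvalue of its Markov matrix. The Hölder question then reduces to a perturbation question for the leading eigenvalue of a family of nonnegative matrices whose combinatorial structure varies with $\theta$.

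The central estimate relates the scale of a perturbation of $\theta$ to the depth at which the combinatorial data can differ. If $|\theta_1-\theta_2|<2^{-n}$, then $K(\theta_1)$ and $K(\theta_2)$ agree on the first $n$ symbols, so the Markov graphs coincide on the block corresponding to vertices of depth at most $n$ along the critical orbit. Truncations $\lambda_n(\theta)$ of the Perron eigenvalue approximate $\lambda(\theta)$ with error of order $\lambda^{-n}$, and combining this with a perturbation bound applied to the two full matrices gives
\[
\bigl|\lambda(\theta_1)-\lambda(\theta_2)\bigr|\le C\,\lambda^{-n}=C\cdot 2^{-n\,h(\theta)/\log 2}.
\]
Taking $n\approx -\log_2|\theta_1-\theta_2|$ then yields
\[
\bigl|h(\theta_1)-h(\theta_2)\bigr|\le C\,|\theta_1-\theta_2|^{h(\theta)/\log 2},
\]
that is, Hölder continuity with exponent proportional to $h(\theta)$ itself, as predicted. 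In the degenerate case $h(\theta)=0$ the bound becomes trivial, and the failure part of the conjecture should be confirmed by exhibiting angles near a zero-entropy parameter whose entropy decays slower than any positive power of $|\Delta\theta|$.

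The principal obstacle is turning the informal statement "$K$ agrees on the first $n$ symbols'' into a quantitative comparison of the two Markov graphs and their Perron eigenvalues with constants uniform in $\theta$. Non-recurrence is precisely the hypothesis that postcritical points do not cluster arbitrarily closely along the critical orbit, so the branching structure of the Hubbard tree does not rearrange wildly under small perturbations of $\theta$; this is what should allow uniform control both of the structural perturbation of $A(\theta)$ and of its spectral gap, which is the quantity hidden in the constant $C$ above. The symbolic analysis of the Mandelbrot set and the combinatorics of Hubbard trees from earlier work, invoked in the abstract, are exactly what supplies this uniformity in the non-recurrent regime, and extracting it in a form strong enough to run the eigenvalue perturbation argument is what I expect to be the hardest step. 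Passing to recurrent and especially infinitely renormalizable parameters, where deep renormalizations amplify perturbations, is what the full conjecture would additionally require and is beyond what this strategy provides.
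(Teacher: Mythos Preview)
Your high-level intuition is right --- agreement of kneading sequences to depth $k$ should force an entropy gap of order $e^{-hk}$ --- but the mechanism you propose does not work as stated, and the paper's actual argument is structurally different.

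The first gap is that the finite transition matrix $A(\theta)$ you want to perturb does not exist in the generality you need. A finite Markov partition requires the critical orbit to be finite, i.e.\ $\nu(\theta)$ to be periodic or preperiodic. The paper's hypotheses are weaker: ``non-recurrent with finite Hubbard tree'' means only that $\Hub(\nu)$ has finitely many endpoints, while the critical orbit may still be infinite. There is then no single matrix whose leading eigenvalue is $e^{h(\theta)}$, and even when both $\theta_1,\theta_2$ happen to be postcritically finite, the two matrices $A(\theta_1),A(\theta_2)$ generally have different dimensions and incomparable block structures, so ``agree on the depth-$\le n$ block'' is not a well-defined perturbation.

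What the paper does instead is avoid matrices entirely and count precritical points on the critical path directly, $N_\nu(n)$, with $h(\nu)=\limsup\frac1n\log^+N_\nu(n)$. The comparison is not between $\nu$ and $\nu'$ directly but via a common \emph{periodic} kneading sequence $\mu$ found by the Branch Theorem with $\mu\preceq\nu$, $\mu\preceq\nu'$, and $\Diff(\mu,\nu),\Diff(\mu,\nu')\ge k$. The Inclusion Theorem then gives an explicit combinatorial decomposition of every precritical word of $\nu$ as a concatenation $w_0e_0X_{l_0}w_1e_1X_{l_1}\cdots w_s$ where the $w_i$ are words of $\Hub(\mu)$ and each $X_{l_i}$ has length $\ge k-1$; summing over $s$ with a binomial count gives $N_{\nu}(n)\le Ce^{h(\mu)n}(1+2Ce^{-h(\mu)k})^n$, hence $h(\nu)-h(\mu)\le 2Ce^{-h(\mu)k}$. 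The same bound applied to $\nu'$ yields the H\"older estimate. The uniformity you correctly flag as the hard step is obtained not from a spectral gap but from showing that non-recurrent, non-renormalizable sequences are \emph{uniformly expanding}: any interval bounded by two depth-$n$ precritical points covers the critical path after $n+\lambda$ iterations, which forces $N(n)\le e^{h(n+\lambda)}$ by a horseshoe argument. Making $\lambda$ (and hence $C$) uniform over all nearby $\mu$ requires a further construction (Lemma~6.2) locating fixed reference points $q,q'$ near $\nu$ that witness expansion for every approximating $\mu'$. The renormalizable case is handled by an entirely separate argument showing entropy is constant on little Mandelbrot sets with positive base entropy.
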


The conjecture is natural in the sense that as early as 1980 it was observed by Guckenheimer \cite{Guckenheimer} that in the context of real unimodal interval maps, entropy is H\"older with respect to a parameter when the entropy is positive.  

Several proofs of H\"older continuity of core entropy have been announced a number of years ago (in particular, by Bruin in 2012, and by Fels in 2016), but never substantiated. A partial result for real quadratic polynomials has been proven by Tiozzo \cite{TiozzoHoelderReal}.

In this paper we prove this conjecture for a large class of complex quadratic polynomials, but not all; in particular, we assume non-recurrence of the polynomial.

We pursue a purely combinatorial approach. The combinatorics of our polynomials is specified in terms of an external angle, or in terms of a kneading sequence. The combinatorial notions are explained below; first we state our main result.

\begin{MainTheorem}[H\"older continuity for angles]
\label{Thm:HoelderAngles}
    Let $\theta$ be an external angle that is non-recurrent with respect to angle doubling and has a finite Hubbard tree. Then core entropy in terms of external angles is H\"older continuous locally at $\theta$ with H\"older exponent $(h(\theta)-\eps)/\log 2$ for every $\eps \in(0,h(\nu))$.
    
Furthermore, the same holds for non-recurrent external angles with infinite trees provided that the associated polynomial is renormalizable with a  base sequence that has positive core entropy.
\end{MainTheorem}

Note that the phrasing of the theorem is based on the statement, discussed below, that \emph{every} quadratic polynomial with connected Julia set, postcritically finite or not, has an associated Hubbard tree, which may be an infinite tree: this is the main result of \cite{MalteDierkTrees}. 

The condition for infinite trees can be rephrased as saying that the renormalizable polynomial is contained in a little Mandelbrot set for which the main hyperbolic component is not contained in the ``main molecule'' of the Mandelbrot set. 

Most of our estimates are concerned with establishing H\"older continuity with the conjectured exponent $h(\theta)$ (up to arbitrarily small $\eps$). Since core entropy is at most $\log 2$, the H\"older exponent is at most 1. The conjecture also predicts that this exponent is best possible; we show this in Theorem~\ref{Thm:larger_expo} for a dense subset of kneading sequences.

It is well known that core entropy is in general not H\"older when the core entropy vanishes; our restriction to positive core entropy is thus not a loss. We describe a simple example in Corollary \ref{Cor:zero_entropy}. 

In Section~\ref{Sec:Htrees} we present our combinatorial construction of abstract Hubbard trees as established in \cite{MalteDierkTrees}. In Section~\ref{Sec:parameter} we develop a natural partial order for the space of all kneading sequences (i.e.\ parameter space). 

We define core entropy of a kneading sequence in Section~\ref{Sec:entropy} and show its basic properties. In Section~\ref{Sec:estimates} we introduce special classes of kneading sequences, in particular renormalizable ones. H\"{o}lder continuity of core entropy as a function of kneading sequences is proven in Sections~\ref{Sec:hoelder_nr} and \ref{Sec:hoelder_r}, depending on whether or not the kneading sequence is renormalizable. Finally, we translate these results in terms of external angles in Section~\ref{Sec:angles}.  

We should emphasize that in our setting, core entropy is defined naturally in terms of kneading sequences,  and most of our work concerns this setting.
Kneading sequences have been a key tool for the study of real quadratic polynomials since they have been introduced by Milnor and Thurston in 1979 \cite{MilnorThurston}, and they have a natural extension to complex quadratic polynomials (see for instance \cite{Spiders}). In particular, every external angle (that describes a complex quadratic polynomial) has an associated kneading sequence, but there are many kneading sequences that are not coming from external angles or quadratic polynomials: so-called ``not complex admissible'' kneading sequences; see \cite{DierkHenkAdmiss}. Core entropy is still defined for such kneading sequences (see Definition~\ref{Def:KneadingEntropy}), and our major  results (see Theorems~\ref{Thm:HoelderNonRenorm}, \ref{Thm:HoelderRenorm} and \ref{Thm:Hoelder_low}) concern H\"older continuity of core entropy as a function of kneading sequences (under the same certain conditions such as non-recurrence).

\newpage

\section{Kneading sequences and Hubbard trees}
\label{Sec:Htrees}

The central object of our discussion are \emph{kneading sequences}. Traditionally, these are defined for unimodal real maps \cite{MilnorThurston}. There is a well known extension of kneading sequences to complex postcritically finite quadratic polynomials: each of these has a finite invariant Hubbard tree such that the unique critical point divides the tree into two components. The kneading sequence is the infinite sequence over the alphabet $\{\0,\1\}$ describing the symbolic dynamics of the critical orbit in the Hubbard tree with respect to the unique critical point. In the special case that the critical point is periodic, a $\star$ indicates the positions where the critical orbit returns to the critical point. 

Our approach is more abstract and more general and goes in the opposite direction: we start with an abstract kneading sequence and construct from it a dynamical system and a Hubbard tree, and in particular extract  core entropy. Of course, in traditional cases where a Hubbard tree is defined, our tree coindices with the traditional tree (a Hubbard tree for a given kneading sequence is unique subject to certain properties). 

\begin{definition}[Kneading sequence]
A \emph{$\star$-periodic kneading sequence} is an infinite periodic sequence over the alphabet $\{\0,\1,\star\}$ where a $\star$ occurs exactly once within the period, at the last position. 

A \emph{kneading sequence} is either a non-periodic infinite sequence over the alphabet $\{\0,\1\}$, or a $\star$-periodic kneading sequence.

The unique $\star$-periodic kneading sequence of period $1$ is called the \emph{trivial kneading sequence} $\ovl\star=\star\star\star\dots$. Here and elsewhere, the overbar denotes a periodic repetition, such as $\1\ovl{\0\1}=\1\,\0\1\,\0\1\,\0\1\dots$

By convention, every non-trivial kneading sequence starts with the symbol $\1$.
\end{definition}

\begin{remark}
Kneading sequences occur naturally for the dynamics of angle doubling on the circle; see for instance \cite{HenkDierkPreprint}. In this context, periodic kneading sequences without $\star$ occur naturally, but for our purposes they are not relevant. On the other hand, not all kneading sequences occur by angle doubling: those that do are called {``complex admissible''}; they are classified in \cite{DierkHenkAdmiss}. The more general kneading sequences with respect to our definition are often called ``abstract kneading sequences''.
\end{remark}

\begin{definition}[Dynamical system associated to kneading sequence]
Every kneading sequence $\nu$ ($\star$-periodic or not) has an associated dynamical system $(X_\nu,\sigma)$ where $X_\nu$ is a collection of infinite sequences over the alphabet $\{\0,\1,\star\}$ and the dynamics is given by $\sigma$, the (left) shift on sequences.  
\end{definition}

The collection $X_\nu$ contains the sequence $\nu$, called the \emph{critical value}, and its $\sigma$-preimage $\star\nu$ called the \emph{critical point}. Moreover, it contains all \emph{postcritical points} $\nu^k:=\sigma^k(\nu)$ for $k\ge 0$, as well as \emph{precritical points} of the form $w\star\nu$, where $w$ is a finite word over $\{\0,\1\}$ (possibly empty). The number of iterations until a precritical point is mapped to the critical value is called its \emph{depth}, i.e $|w|+1$. Finally, $X_\nu$ contains all sequences over $\{0,\1\}$, except those of the form $w\nu$: this implies that the only preimage of $\nu$ is $\star\nu$. We will refer to elements of the space $X_\nu$ as \emph{itineraries}. We also call $\nu$ a precritical point; it has depth $0$.

One can think of this space as a symbolic description of the Julia set. 

We define an inverse distance between any two points in $X_\nu$ in terms of the \emph{difference function} $\diff: X_\nu \times X_\nu \to \N \cup \infty$ where $\diff(a,b)$ is defined as the position of the first difference in the sequences $a$, $b$, where an entry $\star$ counts as ``wild card symbol'' that is not different from $\0$ or $\1$.

Note that the case $\diff(a,b)=\infty$ can occur for distinct itineraries $a,b$ iff the kneading sequence $\nu$ is $\star$-periodic. This comes from the natural situation that the Hubbard tree runs through Fatou components, but for our construction it causes some technical problems; these are solved by introducing additional spaces called \emph{Fatou intervals}.

\begin{definition}[Fatou intervals]
Let $\nu$ be a $\star$-periodic kneading sequence. For a precritical point $x$ (possibly equal to $\nu$) and a sequence $w'' \in X_\nu \cap A_d^\infty$ such that $\diff(x, w'')=\infty$, we define the \emph{Fatou interval} 
\[ 
[w'', x ]
\] 
as abstract interval homeomorphic to $[0,1]$ that is disjoint from $X_\nu$, except for the endpoints. 

We define the $\sigma$ map on such an interval as a homeomorphism to the Fatou interval $[\sigma(w''), \sigma(x)]$ (the image $\sigma(x)$ is still a precritical point because $\nu$ is $\star$-periodic). For a given  kneading sequence $\nu$ we denote the space of all Fatou intervals by $F_\nu$, with the convention $F_\nu=\emptyset$ if $\nu$ is non-periodic. 
\end{definition}

In \cite{MalteDierkTrees} we construct the \emph{Hubbard tree} for any non-trivial kneading sequence. Formally, a \emph{tree} is a topological space where any two distinct points $a,b$ are connected by a unique \emph{path} $[a,b]$. A \emph{path} is a subspace homeomorphic to $[0,1]$. A point disconnecting the tree into three or more components is called a \emph{branch point}. A point that does not disconnect the tree when removed is an \emph{endpoint}. A tree is \emph{finite} if it has finitely many endpoints. Our Hubbard trees may well be infinite trees.

\begin{theorem}[The Hubbard Tree \cite{MalteDierkTrees}]
\label{Thm:Hubbard_tree}
For any non-trivial kneading sequence $\nu$, there exists a unique topological space $\Hub(\nu)\subseteq X_\nu \cup F_\nu$ with the following properties
\begin{enumerate}
\label{item:tree}
\item $\Hub(\nu)$ is a tree that satisfies $\sigma(\Hub(\nu))=\Hub(\nu)$. It contains the critical point and thus all postcritical points. 
\item 
\label{Item:HubEndpoints}
Each endpoint of the tree is a postcritical point, i.e.\ one of the points $\sigma^k(\nu)$ for $k\ge 0$. There are two possibilities:
\begin{itemize}
\item
the tree has finitely many endpoints; if $\kappa$ is this number of endpoints, then the endpoints are exactly the points $\nu$, $\sigma\nu$, $\sigma^2\nu$, \ldots , $\sigma^{\kappa-1}\nu$, and no others. In this case, the tree is compact.
\item
The tree has infinitely many endpoints, and these are exactly the points $\sigma^k\nu$ for $k\ge 0$. 
\end{itemize}
In particular, the critical value $\nu$ is always an endpoint. 

\item 
\label{Item:Hubinj}
$\sigma$ is injective on every connected subset that does not contain the critical point $\star\nu$.
\item
\label{item:alpha}
The path $[\star\nu, \nu] \subseteq \Hub(\nu)$ contains a fixed point $\alpha:=\ovl\1$.
\item
\label{item:branching}
The branch points of $\Hub(\nu)$ are periodic or preperiodic sequences in $X_\nu \cap \{\0,\1\}^\infty$. 
\item 
\label{Item:Diff}
If $a,b\in\Hub(\nu)\cap X_\nu$ and  $c \in [a,b]\cap X_\nu$, then $\diff(a,c)\ge \diff(a,b)$.
\item 
\label{Item:PrecDenseHub}
For each  $a\in \Hub(\nu)\cap X_\nu$ and every component $K$ of $\Hub(\nu)\setminus \{a\}$, either $K$ contains a Fatou interval with $a$ on its boundary, or there exist precritical points $a_n \in K$ with $\diff(a_n,a)<\infty$ and $\diff(a_n,a)\to \infty$ (this implies that the $a_n$ converge to $a$ in the topology of $\Hub(\nu)$). 
In particular, precritical points are dense in $\Hub(\nu)$ if $\nu$ is non-periodic. 
\item
\label{Item:Hubcrit}
The path $[\star\nu,\nu]\subset\Hub(\nu)$ is called the \emph{critical path}. It generates the entire tree in the sense that
\begin{equation*}
\Hub(\nu)=\bigcup_{k=0}^\infty \sigma^k([\star\nu, \nu])
\;.
\end{equation*}
\end{enumerate}
\end{theorem}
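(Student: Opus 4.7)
The plan is to construct $\Hub(\nu)$ bottom-up by first putting an \emph{arc structure} on $X_\nu \cup F_\nu$ using the difference function, then defining the critical path $[\star\nu,\nu]$ as a concrete one-dimensional object, and finally taking $\Hub(\nu):=\bigcup_{k\ge 0}\sigma^k([\star\nu,\nu])$ in the spirit of property~\eqref{Item:Hubcrit}; the work is to verify that this union really is a tree and that the eight listed properties hold.

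First I would use the difference function to define an abstract \emph{betweenness} relation on $X_\nu$: a point $c\in X_\nu$ lies between $a,b$ iff $\diff(a,c)\ge\diff(a,b)$ and $\diff(b,c)\ge\diff(a,b)$, which is the intrinsic content of property~\eqref{Item:Diff}. On the critical path I would explicitly list the precritical points $w\star\nu$ with $w$ a prefix of the critical value $\nu$ (these are forced to lie between $\star\nu$ and $\nu$ by itinerary considerations), order them by depth, and insert Fatou intervals between any two points at infinite difference. This produces a linearly ordered, order-complete candidate for $[\star\nu,\nu]$ that is topologically a closed arc; property~\eqref{item:alpha} is then a direct calculation showing that $\ovl\1$ lies between $\star\nu$ and $\nu$ and is fixed by $\sigma$.

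Next I would define $\sigma$ on each Fatou interval as the homeomorphism prescribed in the definition, so that $\sigma$ is continuous on $[\star\nu,\nu]\setminus\{\star\nu\}$ and carries it onto a compact arc ending at $\nu$ and $\sigma\nu$. Iterating and taking the union gives $\Hub(\nu)$ as in~\eqref{Item:Hubcrit}, and shift-invariance of the tree follows by construction. The crucial point is to show that each new forward image $\sigma^k([\star\nu,\nu])$ attaches to the existing union along a single subarc, so that no loops are created; this is where I expect the main obstacle. The argument proceeds by induction on $k$, analyzing how the image arc can meet previous images in terms of coinciding itineraries: two points can only be identified when their infinite tails agree, and the difference-function inequality in \eqref{Item:Diff} forces any overlap to be a connected subarc, ruling out cycles. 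Injectivity of $\sigma$ away from $\star\nu$ (property~\eqref{Item:Hubinj}) reduces to the fact that two itineraries with the same shift differ only in the first symbol, so coincident images force the common point to be $\star\nu$.

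With the tree property established, the remaining items are derived by structural arguments. For \eqref{Item:HubEndpoints}, I would show that any non-postcritical point $x$ lies in the interior of $\sigma^k([\star\nu,\nu])$ for some $k$ by expressing $x$ as a limit of precritical points approaching $x$ from two sides (this is exactly~\eqref{Item:PrecDenseHub}); the dichotomy finite/infinite number of endpoints then comes from whether the orbit of $\nu$ in the combinatorial tree is eventually periodic as an endpoint or not. Property~\eqref{item:branching} is proven by showing that any branch point is characterized by the combinatorial pattern of itineraries at triple junctions, which must be invariant under some iterate of $\sigma$, forcing a (pre)periodic itinerary. Density of precritical points, \eqref{Item:PrecDenseHub}, is a direct application of the arc construction: between any two points of the tree one can insert a precritical point of arbitrarily large depth by choosing a sufficiently long common prefix in $\nu$. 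Finally, uniqueness of $\Hub(\nu)$ follows from the fact that any space $Y\subseteq X_\nu\cup F_\nu$ satisfying \eqref{item:tree}--\eqref{Item:Hubcrit} must contain $[\star\nu,\nu]$ and all its iterates, while conversely being contained in their union by minimality forced by the endpoint characterization.
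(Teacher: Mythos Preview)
The paper does not prove this theorem at all: it is quoted from the companion paper \cite{MalteDierkTrees}, and the only commentary given is the single sentence after the statement saying that in \cite{MalteDierkTrees} ``we first construct the critical path out of the kneading sequence, and then the Hubbard tree as its forward orbit as in \eqref{Item:Hubcrit}.'' At that level your plan agrees with the cited approach: build $[\star\nu,\nu]$ first, then iterate.

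There is, however, a concrete error in how you propose to build the critical path. You say you would ``explicitly list the precritical points $w\star\nu$ with $w$ a prefix of the critical value $\nu$'' and order them by depth. That set is far too small: for $\nu=\1\ovl\0$ there are $2^{n-2}$ precritical points of depth $n$ on the critical path (Lemma~\ref{Lem:entropybasics} and Proposition~\ref{Prop:uniquemax}), not the $O(n)$ prefixes of $\nu$. The paper itself describes the correct construction twice---in the proof of Theorem~\ref{Thm:InclusionTrees} and in the paragraph introducing extended Hubbard trees in Section~\ref{Sec:parameter}---and it is \emph{recursive}: one finds the unique precritical point of lowest depth between the current pair of endpoints (determined by the first position where their itineraries differ), then repeats on each of the two subintervals, inserting a Fatou interval whenever the difference is infinite. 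Only the very first precritical point found has $w$ a prefix of $\nu$; subsequent ones generally do not. Without the full recursive set the order-completion you describe would not yield an arc, and the density statement \eqref{Item:PrecDenseHub} would fail outright. Once you replace your ``prefix'' description by this recursive one, the rest of your outline is consistent with what the paper sketches.
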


The construction of the Hubbard tree in \cite{MalteDierkTrees} is in fact done in reverse order: we first construct the critical path out of the kneading sequence, and then the Hubbard tree as its forward orbit as in \eqref{Item:Hubcrit}. 

\Newpage

\section{Parameter space}
\label{Sec:parameter}

Denote by $\K$ the space of  kneading sequences ($\star$-periodic or non-periodic). In this chapter, we provide structure to this space that allows us to compare different kneading sequences with their dynamics. We define, for $\e\in\{\0,\1\}$, the projections $\pi_{\e}: \K \to \{\0,\1\}^\infty$ as the mapping that replaces every $\star$ by $\e $. We then define the difference $\Diff: \K \times \K \to \N \cup \{+\infty\}$  by
\[
\Diff(\nu, \nu')=\max_{\e \in \{\0,\1\}} \diff(\pi_{\e}(\nu), \pi_{\e}(\nu')).
\]
The topology on $\K$ is defined by the neighborhod basis
\[
N_k(\nu)=\{ \nu' \in \K : \, \Diff(\nu,\nu')\ge k\}.
\]
Note that this is slightly different from $\diff(\cdot,\cdot)$ defined on $X_\nu$: for $\Diff$ every $\star$ must be replaced consistently by either $\0$ or $\1$, while for $\diff$ the $\star$ symbol acts as a wild card that does not differ from $\0$ or from $\1$;  for example 
 $\diff(\ovl{\1\star},\1\0\,\1\1\,\1\1\,\1\0\ldots )>8$, while $\Diff(\ovl{\1\star},\1\0\,\1\1\,\1\1\,\1\0\ldots )=4$. 

Just as for the difference between itineraries, one might ask when $\Diff(\mu, \nu)=\infty$ for two distinct kneading sequences $\mu, \nu$. This leads to the concept of \emph{bifurcation}.

\begin{definition}[Bifurcating $\star$-periodic kneading sequences]
We say that a $\star$-periodic kneading sequence $\nu=\ovl{\nu_1\nu_2\ldots  \nu_{p-1}\star}$ of period $p$ is a \emph{bifurcation} from period $q$ if $q$ strictly divides $p$ and there is a symbol $\e\in \sym$ such that $\ovl{\nu_1\nu_2\ldots  \nu_{p-1}\e}$ has exact period $q$ (but not period lower than $q$).
\end{definition}

It follows directly that if a sequence $\nu$ is a bifurcation from a sequence $\nu'$, then $\Diff(\nu,\nu')=\infty$. The same is true if $\nu$ and $\nu'$ are bifurcations from a common base sequence.

The converse is also true: if $\Diff(\nu,\nu')=\infty$ for $\nu,\nu'\in\K$, then at least one of these sequences must contain a $\star$, so it is $\star$-periodic, and then the other sequence must be periodic and thus $\star$-periodic as well. If $\nu_0=\pi_{\e}(\nu)=\pi_{\e}(\nu')$, then the period of $\nu_0$ must divide the periods of $\nu$ and $\nu'$ (possibly be equal to one of them), and so $\nu$ and $\nu'$ arise from $\nu_0$ by replacing certain symbols $\e$ by $\star$. 

We distinguish between \emph{standard} bifurcations and \emph{non-standard} bifurcations: the former are realized by complex parameters while the latter are not. More details on them are given in \cite[Proposition~4.3]{MalteDierkTrees}.

\emph{Internal addresses} are a good way to bring structure to parameter space; see  \cite{IntAddr}.

\begin{definition}[Internal address/upper and lower periodic sequence]
For every sequence $\nu \in \K\cup \sym^\infty$ starting with $\1$ we define its \emph{internal address} $I(\nu)= S_0\to S_1\to \ldots  $ as a finite or infinite, strictly increasing sequence of positive integers. They are recursively defined as follows.

\begin{itemize}
\item $S_0=1$, $\nu_0:=\ovl \1$; 
\item if $\nu$ is periodic and its exact period coincides with the exact period of $\nu_n$, then $I(\nu)$ is finite and its last entry is this period;
\item otherwise, let $S_{n+1}:=\diff(\nu,\nu_n)$. 
Let $\nu_{n+1}$ be the unique $S_{n+1}$-periodic sequence in $\{\0,\1,\}^\infty$ that first differs from $\nu_{n}$ at position $S_{n+1}$.
\end{itemize}

For a $\star$-periodic sequence $\nu$ with period $p$, there is exactly one choice for $\e\in\{\0,\1\}$ such that the internal address of $\pi_\e(\nu)$ is finite with last entry $S_n=p$. We will call this sequence $\pi_\e$ the \emph{upper sequence of $\nu$} and denote it by $\up\nu$. For the other choice $\e'\neq\e$, the sequence $\pi_{\e'}(\nu)$ is called the \emph{lower sequence} and denoted by $\lo\nu$ (in this case, the internal address is either infinite, or it is finite with last entry $S_n$ strictly dividing $p$). 
\end{definition}

For example, the kneading sequence $\nu=\1\,\ovl{\1\0}$ has infinite internal address $1-3-5-7-\ldots $. The kneading sequence $\eta=\ovl{\1\1\0\1\star}$ has internal address $1-3-5$; with $\up\eta=\ovl{\1\1\0\1\0}$. Its lower sequence $\low\eta=\ovl{\1\1\0\1\1}$ has the infinite address $1-3-6-8-11-13-16-\ldots $.

The sequence $\mu=\ovl{\1\0\1\star}$ has finite internal address $1-2-4$ and $\up\mu=\ovl{\1\0\1\1}$, while $\lo\mu=\ovl{\1\0}$ and $I(\lo\mu)=1-2$. Note that $\mu$ is a bifurcation from $\ovl{\1\star}$ with period~$2$. 

Next we define characteristic points in Hubbard trees. These are well known to have great significance for periodic orbits, but our definition applies (and is useful) also for non-periodic points.

\begin{definition}[Characteristic point]
A point $\tau\in\Hub(\nu)$ is called a \emph{characteristic point} if a single component of $\Hub(\nu)\sm\{\tau\}$ contains the critical point and all points on the orbit of $\tau$ (except $\tau$ itself when $\tau$ is periodic), but not the critical value. 
\end{definition}

Consequently, a characteristic point $\tau$ lies on the critical path of $\nu$, and moreover the path from $\nu$ to any point in the orbit of $\tau$ contains $\tau$. The critical value $\nu$ is always characteristic. 

Next, we define a  binary relation between kneading sequences; we will show in Theorem~\ref{Thm:transitive} that this is a partial order with dynamical significance.

\begin{definition}[Order of kneading sequences]
\label{Def:OrderKneadings}
For kneading sequences $\nu$  and $\mu$, we say that $\mu\prec\nu$ if $\Hub(\nu)\sm\{\nu\}$ contains a characteristic point with itinerary $\mu$ (resp.\ with itinerary $\up \mu$ in case that $\mu$ is $\star$-periodic). We denote by $\Hub_{{\tilde{\mu}}}(\nu)$ the subtree of $\Hub(\nu)$ spanned by the orbit of $\mu$ (resp.\ $\up\mu$).
\end{definition}

We use the usual notation $\mu \preceq \nu \Longleftrightarrow (\mu \prec \nu \text{ or } \mu=\nu)$ and $\nu \succ \mu \Longleftrightarrow \mu \prec \nu$. 

It is shown in \cite[Lemma~3.14]{MalteDierkTrees} that every periodic point $x\in\Hub(\nu)$ that is not an endpoint has a unique characteristic point on its periodic orbit, and all characteristic points are on $[\alpha,\nu]$. 

We say that a point $y\in \Hub(\nu)$  \emph{lies behind} $x$ if it is contained in a different component of $\Hub(\nu)\sm\{x\}$ than $\star\nu$ (that is, if $x$ separates $y$ from $\star\nu$).

\subsection{The Inclusion Theorem}

We say that a finite word $w$ over $\{\0,\1\}$ is \emph{a word in the Hubbard tree $\Hub(\nu)$} if the precritical point $w\star\nu$ is contained in $\Hub(\nu)$.

\begin{theorem}[Inclusion of trees]
\label{Thm:InclusionTrees}
Let $\nu$ and $\mu \neq \nu$ be two kneading sequences such that $\Hub(\nu)$ contains a characteristic itinerary $\tilde{\mu}\in \{ \mu, \up\mu, \lo\mu\}$ (the latter two cases  apply if $\mu$ is $\star$-periodic). Let $\Hub_{{\tilde{\mu}}}(\nu)\subset \Hub(\nu)$ denote the subtree spanned by $\tilde{\mu}$. 
Then we have the following:
\begin{enumerate}
\item
$\sigma(\Hub_{{\tilde{\mu}}}(\nu))=\Hub_{{\tilde{\mu}}}(\nu)\cup[\tilde{\mu},\nu]$. \label{item:it_subtree}
\item
\label{item:branch}
All branch points $\tau$ of $\Hub(\mu)$ appear in $\Hub(\nu)$ with the same itinerary and relative ordering. If $\tau$ is a periodic branch point that is not a preimage of $\lo\mu$, then its number of branches is the same in both trees.
\item
\label{lomu}
If $\tilde{\mu}=\up\mu$, then $\Hub(\nu)$ also contains $\lo\mu$ as characteristic point.
\item
\label{it1}
If $w\star\mu$ is a precritical point on $\Hub(\mu)$, then there is a precritical point $w\star \nu$ on $\Hub_{{\tilde{\mu}}}(\nu)$.
\item
\label{it2}
This identification respects appearance on the critical path, and the order along this path.
\item
\label{it3}
Conversely, if $w\star\nu$ is a precritical point on the critical path of $\Hub(\nu)$ with the property that it is never mapped behind $\tilde{\mu}$ before reaching the critical value, then there is a precritical point $w\star\mu\in\Hub(\mu)$.
\item
\label{item:combinatorics}
Every precritical point on the critical path of $\nu$ has the form \[w_0e_0X_{l_0}w_1e_1X_{l_1}\ldots  X_{l_{s-1}}w_s\star\nu  \text{ or } X_{l_0}w_1e_1X_{l_1}\ldots  X_{l_{s-1}}w_s
\star\nu\] where $s\ge 0$, $l_i\ge \diff(\tilde{\mu},\nu)-1$, $e_i\in \{\0,\1\}$,  the word $w_i$ is  from the Hubbard tree of $\mu$  and $X_l$ stands for the first $l$ entries of $\nu$ (or more precisely the itinerary of the local arm at $\nu$).
\item
\label{item:combinatorics_reversed}
Conversely, for $l=\diff(\tilde{\mu}, \nu)-1$ and every $s\ge 0$, there exists an injective map from  $(s+1)$-tuples of  precritical points $w_i \star \nu$ lying on $\sigma^{l}([\tilde{\mu}, \nu])\cap [\star\nu, \tilde{\mu}]$ that stay inside $\Hub_{{\tilde{\mu}}}(\nu)$ before reaching the critical value to precritical points on the critical path of $\nu$. These have the form
\[w_0e_0X_{l}w_1e_1X_{l}\ldots  X_{l}w_s\star\nu\]
for certain values $e_0,...,e_{s-1}\in \{\0,\1\}$. 
\end{enumerate}
\end{theorem}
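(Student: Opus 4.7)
The plan is to build an explicit dynamical correspondence $\Phi$ between the orbit of $\mu$ in $\Hub(\mu)$ and the orbit of $\tilde{\mu}$ in $\Hub_{\tilde{\mu}}(\nu)$, extended by piecewise injective liftings on arcs. The central fact that drives everything is Theorem~\ref{Thm:Hubbard_tree}\eqref{Item:Hubinj}: $\sigma$ is injective on any connected subset avoiding the critical point. Combined with the characteristic property of $\tilde{\mu}$, which forces the orbit of $\tilde{\mu}$ to live on the opposite side of $\tilde{\mu}$ from $\nu$, this lets me track how the dynamics of $\Hub(\nu)$ restricted to $\Hub_{\tilde{\mu}}(\nu)$ differs from the dynamics of $\Hub(\mu)$: each ``virtual'' return to the critical value in the $\mu$-dynamics translates in $\Hub(\nu)$ into reaching $\tilde{\mu}$ and then traversing the detour $[\tilde{\mu},\nu]$.

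For item~\eqref{item:it_subtree} I would case-split on whether $\star\nu\in\Hub_{\tilde{\mu}}(\nu)$. If not, $\sigma$ is injective on $\Hub_{\tilde{\mu}}(\nu)$, and the image is the tree spanned by the orbit of $\sigma(\tilde{\mu})$, together with the arc $[\tilde{\mu},\nu]$ forced by the characteristic position of $\tilde{\mu}$ on the critical path. If $\star\nu\in\Hub_{\tilde{\mu}}(\nu)$, the image folds at $\nu$, again producing $[\tilde{\mu},\nu]$ as the new piece attached to $\Hub_{\tilde{\mu}}(\nu)$. Items~\eqref{item:branch} and~\eqref{lomu} then follow from the observation that $\sigma$ locally preserves the cyclic order of arms at periodic branch points: since the characteristic property pins down which arm at $\tilde{\mu}$ points toward $\nu$, the branching structure of $\Hub(\mu)$ is faithfully copied onto $\Hub_{\tilde{\mu}}(\nu)$ with the ``$\nu$-arm'' replacing the ``$\mu$-arm''. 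The existence of $\lo\mu$ in~\eqref{lomu} when $\tilde{\mu}=\up\mu$ is a consequence of the bifurcation discussion of Section~\ref{Sec:parameter}: the two periodic itineraries $\up\mu$ and $\lo\mu$ arise from the same underlying periodic orbit, and both must be represented in $\Hub(\nu)$.

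The remaining items~\eqref{it1}--\eqref{item:combinatorics_reversed} form the combinatorial heart. I would proceed in two complementary directions. \emph{Forward:} given a precritical point $w\star\mu\in\Hub(\mu)$, the coding map $\Phi$ lifts $w$ to an arc in $\Hub_{\tilde{\mu}}(\nu)$. Each symbol of $w$ records one application of $\sigma$; whenever the iterate in $\Hub(\mu)$ would reach $\mu$, the parallel iterate in $\Hub(\nu)$ reaches $\tilde{\mu}$ and then needs an additional $l:=\diff(\tilde{\mu},\nu)-1$ steps along $[\tilde{\mu},\nu]$ to arrive at $\nu$. Each such detour inserts an $X_l$-block into the symbol sequence, yielding the interleaved form of~\eqref{item:combinatorics}. \emph{Backward:} starting from a precritical point on the critical path of $\Hub(\nu)$ whose orbit never goes behind $\tilde{\mu}$, one reads off the $X_l$-blocks and excises them to recover a word admissible for $\Hub(\mu)$; property~\eqref{Item:Diff} guarantees that the excision is well-defined, as the $X_l$-blocks provide the maximal initial-segment agreement with $\nu$.

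The main obstacle is the bookkeeping in~\eqref{item:combinatorics} and~\eqref{item:combinatorics_reversed}: one must (a) verify that the interleaved form exhausts all precritical points on the critical path, (b) show injectivity of the backward correspondence on tuples, and (c) handle the boundary case where $\tilde{\mu}$ is $\star$-periodic so that Fatou intervals enter and $\diff$ takes the value $\infty$. I expect this bookkeeping to reduce to a disciplined induction on the number $s$ of $X_l$-blocks, with base case $s=0$ being the direct translation $\Hub(\mu)\leftrightarrow\Hub_{\tilde{\mu}}(\nu)$ established in items~\eqref{it1}--\eqref{it3}, and each inductive step peeling off one block via the dynamical detour described above.
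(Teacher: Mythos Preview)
Your treatment of item~\eqref{item:it_subtree} matches the paper, but there is a genuine confusion in how you handle items~\eqref{it1} and~\eqref{item:combinatorics}. You describe the forward direction as inserting $X_l$-blocks whenever the $\Hub(\mu)$-iterate reaches $\mu$, and say this yields the interleaved form of~\eqref{item:combinatorics}. That conflates two distinct assertions. Item~\eqref{it1} says $w\star\mu$ lifts to $w\star\nu$ with the \emph{same} word $w$; no blocks are inserted. Item~\eqref{item:combinatorics} is a statement about the \emph{additional} precritical points on the critical path of $\nu$ that have no analogue in $\Hub(\mu)$ --- precisely those whose orbit does escape behind $\tilde\mu$ --- and the $X_l$-blocks record those excursions.

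The mechanism you are missing for~\eqref{it1}--\eqref{it2} is a \emph{sandwiching argument}. The paper follows the recursive construction of precritical points on $[\star\mu,\mu]$: adjacent points $w_1\star\mu,w_2\star\mu$ generate a child $w_3\star\mu$ by iterating $[w_1\star\mu,w_2\star\mu]$ until it straddles $\star\mu$. One runs the parallel iteration of $[w_1\star\nu,w_2\star\nu]$ in $\Hub(\nu)$ and checks it straddles $\star\nu$ after the same number of steps. The key point is that at each stage the $\Hub(\nu)$-interval is bounded by two points on the orbit of $\tilde\mu$; whenever the $\Hub(\mu)$-iteration produces an endpoint $\mu$, the $\Hub(\nu)$-iteration produces $\nu$, but since $\tilde\mu\in[\nu,\sigma^t\tilde\mu]$ by the characteristic property one \emph{discards} the arc $[\tilde\mu,\nu]$ and continues with $\tilde\mu$ as the new endpoint. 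So the detour you describe is thrown away, not recorded. Once this is done, items~\eqref{it3}--\eqref{item:combinatorics} follow by analysing the precritical points \emph{skipped} by this recursion --- those landing in $[\tilde\mu,\nu]$ at some stage --- whose itineraries then acquire an $X_l$-block before re-entering the recursion. A smaller issue: $\up\mu$ and $\lo\mu$ are not two labels for one periodic orbit but two genuinely different periodic itineraries; the paper produces $\lo\mu$ in~\eqref{lomu} as a limit of precritical points, not from the bifurcation discussion.
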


This theorem applies in particular when $\mu \prec \nu$ (see Definition~\ref{Def:OrderKneadings}).
\begin{proof}
If $x\in\Hub_{{\tilde{\mu}}}(\nu)$, then by definition there are two points $\mu',\mu''$ on the orbit of $\tilde{\mu}$ with $x\in[\mu',\mu'']$. If $\star\nu\not\in[\mu',\mu'']$, then $\sigma$ sends $[\mu',\mu'']$ homeomorphically to its image, which is thus contained in $\Hub_{{\tilde{\mu}}}(\nu)$. Otherwise, $\sigma([\mu',\mu''])=[\sigma(\mu'),\nu]\cup[\sigma(\mu''),\nu]=[\sigma(\mu'),\tilde{\mu}]\cup[\sigma(\mu''),\tilde{\mu}]\cup[\tilde{\mu},\nu]$ because $\tilde{\mu}$ is characteristic. This proves \eqref{item:it_subtree}.

Since  $\mu$ is characteristic, we have $\tilde{\mu}\in[\alpha,\nu]$ and the preimage $\1\tilde{\mu}\in[\star\nu, \alpha]$. 

Now we prove \eqref{it1} for all precritical points on the critical path of $\Hub(\mu)$. In \cite[Definition 2.4]{MalteDierkTrees}, these precritical points are constructed recursively by constructing the unique precritical point on $[\star\mu,\mu]$ that first hits $\star\mu$, then proceeding recursively for the two sub-intervals on the two sides of the precritical point just constructed. We show that precritical points with the same prefix words can be constructed on the critical path of $\Hub(\nu)$.

The base of the induction is provided by the first precritical point on $[\star\mu,\mu]$, say $w_0\star\mu$. It is found by iterating $[\star\mu,\mu]$ as long as this iteration is injective, that is until the sequences $\star\mu$ and $\mu$ have a different initial symbol after the iterated shift. Note that after the first iteration, the image of $[\star\mu,\mu]$ is $[\mu,\sigma\mu]$.

Set $k:=\diff(\star\mu,\mu)$. Then the length of $w_0$ equals $\diff(\star\mu,\mu)-1$, and the interval $[\star\mu,\mu]$ maps forward $k-1$ iterations, after which the critical point $\star\nu$ cuts the image interval into the two sub-intervals $[\sigma^{k-2}\mu,\star\mu]$ and $[\star\mu, \sigma^{k-1}\mu]$. 
Their immediate images after one further (injective) iteration are $[\sigma^{k-1}\mu,\mu]$ and $[\sigma^{k}\mu,\mu]$.

By analogy in $\Hub(\nu)$ we iterate the interval $[\1\tilde{\mu},\tilde{\mu}]$ as long as this iteration is injective, that is until the sequences $\1\tilde{\mu}$ and $\tilde{\mu}$ have a different initial symbol after the iterated shift. Again, after one iteration we have the interval $[\tilde{\mu},\sigma\tilde{\mu}]$, and we iterate until the first difference is found. This happens after the same number of iterations as in $\Hub(\mu)$: this is clear if $\mu$ is non-periodic, and otherwise this follows from the fact that $[\lo\mu, \mu]\subset \Hub(\mu)$ does not contain precritical points. The precritical point found in $\Hub(\nu)$ has itinerary $w_0\star\nu$, and the two sub-intervals are $[\sigma^{k-2}\tilde{\mu},\star\nu]$ and $[\star\nu, \sigma^{k-1}\tilde{\mu}]$. Their immediate images are $[\sigma^{k-1}\tilde{\mu},\nu]=[\sigma^{k-1}\tilde{\mu},\tilde{\mu}]\cup[\tilde{\mu},\nu]$ and  $[\sigma^{k}\tilde{\mu},\nu^*]=[\sigma^{k}\tilde{\mu},\tilde{\mu}]\cup[\tilde{\mu},\nu]$. 

The precritical point $w_0\star\nu$ does not map during iteration to the part of $\Hub(\nu)$ behind $\mu$ before it lands on the critical value $\nu$ because it is ``sandwiched'' between two points on the orbit of $\mu$: initially we have $w_0\star\nu\in[\1\tilde{\mu},\tilde{\mu}]$, and this sandwiching relation is preserved as long as the interval $[\1\tilde{\mu},\tilde{\mu}]$ maps forward injectively, that is until it maps over $\star\nu$ and the point $w_0\star\nu$ is found.

For the inductive step, consider precritical points $w_1\star\mu$ and $w_2\star\mu$ on the critical path of $\mu$ that construct the child itinerary $w_3\star \mu$ between them and let $|w_2|$ be the higher depth (in an edge case, the point $w_1\star\mu$ may be $\mu$).
By inductive hypothesis, we assume that the precritical points $w_1\star\mu$ and $w_2\star\mu$ have analogues $w_1\star\nu$ and $w_2\star\nu$ in $[\star\nu, \tilde{\mu}]\subset \Hub(\nu)$ such that $w_1\star\nu$ and $w_2\star\nu$ do not map behind $\tilde{\mu}$ before reaching $\nu$. In $\Hub(\mu)$ we iterate the interval $[w_1\star\mu, w_2\star\mu]$ injectively $|w_2|+1$ times until we have $[\sigma^t\mu,\mu]$ (where $t$ is the difference between the depths). The argument is the same as for the base case, where we had $t=1$. In order to find $w_3\star\mu$, we iterate $[\sigma^t\mu,\mu]$ as long as it is injective. Setting $k:=\diff(\star\mu,\sigma^{t-1}\mu)$, the number of injective iterations of $[\sigma^{t}\mu,\mu]$ is $k-2$. After the last injective iteration, we land at the interval $[\sigma^{t+k-2}\mu,\sigma^{k-2}\mu]\ni\star\mu$. 

The new sub-intervals are $[\sigma^{k-2}\mu,\star\mu]$ and $[\star\mu, \sigma^{t+k-2}\mu]$, and their immediate images after one further iteration are $[\sigma^{k-1}\mu,\mu]$ and $[\sigma^{t+k-1}\mu,\mu]$. 

By analogy in $\Hub(\nu)$, we iterate $[w_1\star\nu, w_2\star\nu]$ as long as it is injective. First, after $|w_1|+1$ iterations, we land at $[\nu, \sigma^{|w_1|+1} w_2 \star \nu]$. Since by assumption, $\sigma^{|w_1|+1} w_2 \star \nu$ is a precritical point not behind $\tilde{\mu}$, the interval $[\nu, \sigma^{|w_1|+1} w_2 \star \nu]$ contains $[\tilde{\mu}, \sigma^{|w_1|+1} w_2 \star \nu]$. Iterating only the latter interval further, we obtain $[\sigma^{t}\tilde{\mu}, \nu]$, where $t$ is the same as for $\Hub(\mu)$. Since $\tilde{\mu}$ is characteristic, this interval contains $[\sigma^{t}\tilde{\mu}, \tilde{\mu}]$.

Again this interval can be iterated the same number of times in $\Hub(\nu)$ as in $\Hub(\mu)$ and we once again obtain the two subintervals $[\sigma^{k-2}\tilde{\mu},\star\tilde{\mu}]$ and $[\star\tilde{\mu}, \sigma^{t+k-2}\tilde{\mu}]$. Thus, the precritical point $w_3 \star \nu$ has to lie between $w_1\star\nu$ and $w_2\star\nu$. Since $w_3\star\nu$ is sandwiched first by the two precritical points and then by orbit points of $\mu$, it does not land behind $\tilde{\mu}$ either before reaching the critical point. This concludes the proof of \eqref{it1} and \eqref{it2} along the critical path.

To show \eqref{it1} and \eqref{it2} for a precritical point anywhere on $\Hub(\mu)$, simply note that any precritical point on the Hubbard tree is an image of a precritical point on the critical path (see Theorem~\ref{Thm:Hubbard_tree}~\eqref{Item:Hubcrit}); this implies that the order-preserving identification of precritical points on the critical path extends to the whole tree. 

To show $\eqref{item:branch}$ we distinguish two cases. If a branch point  $\tau \in \Hub(\mu)$ is not an endpoint of a Fatou interval, each branch contains a sequence of precritical points that converge to $\tau$. Hence, by $\eqref{it1}$, the itinerary $\tau$ also exists in $\Hub(\nu)$.  If the branch point is periodic, then each local arm is an image of the two local arms of $\tau$ lying on the critical path by \cite[Corollary 3.15]{MalteDierkTrees}. Since precritical points of $\mu$ lie dense on them, their dynamics and in particular the number of branches at $\tau$ has to stay the same.

The other case is that $\tau$ is an endpoint of a Fatou interval. Then $\mu$ must be a bifurcation and $\tau$ is an iterated preimage of $\lo\mu$. Let $q$ be the period of $\lo\mu$. Then the subtree of $\Hub(\nu)$ spanned by the points $\{\sigma^{kp}\tilde{\mu}\}_{k\ge 0}$ must contain precritical points that coincide with $\lo\mu$ except at positions that are multiples of $q$ and these points thus have to converge to $\tau$. A similar reasoning applies for preimages of $\lo\mu$. These arguments also yield \eqref{lomu}.

To show $\eqref{it3}$, note that the critical path of $\Hub(\nu)$ equals $[\tilde{\mu},\nu]\cup[1\tilde{\mu},\tilde{\mu}]\cup[\star\nu, 1\tilde{\mu}]$. Therefore, every precritical point $w\star\nu$ on the critical path of $\nu$ either lies on $[\tilde{\mu},\nu]\cup [\star\nu, 1\tilde{\mu}]$, or it lies on $[w_1\star\nu, w_2\star\nu]$ for some pair of precritical points as discussed in the induction step above. The only way how they can be omitted in the inductive step above is if they are mapped to $[\tilde{\mu}, \nu]$ once we iterated to $[\nu, \sigma^{|w_1|+1} w_2 \star \nu]$ or once we iterated to $[\sigma^{t}\tilde{\mu}, \nu]$. 

So if $w\star\nu$ is such an omitted precritical point, then $w$ has to coincide with $w_1$ or $w_2$ one step before they are mapped to the new sub-intervals. When $w_1\star\nu$ or $w_2\star\nu$, respectively, would hit the critical point, there is a choice on which side of it $w\star\nu$ lands, and this choice is displayed by $e_i$. After landing in $[\tilde{\mu},\nu]$, the next $\diff(\tilde{\mu},\nu)-1$ entries are determined because this is how long $[\tilde{\mu},\nu]$ is mapped injectively. After that, the iterates of $w\star\nu$ stay outside of $H_{\tilde{\mu}}(\nu)$ and lie on the same side of the critical point as the corresponding iterate of $\nu$, or they land inside of $H_{\tilde{\mu}}(\nu)$ and the argument can be repeated. This shows \eqref{item:combinatorics}.

We also see that for each precritical point $w_0\star\mu$ on the critical path of $\mu$, there exists an interval of itineraries starting with $w_0$ on the critical path of $\nu$ that is homeomorphically mapped onto $[\nu, \tilde{\mu}]$ under $\sigma^{|w_0|+1}$. After $l$ further iterations, this interval is mapped to $[\sigma^l \nu, \sigma^l \tilde{\mu}]\ni \star\nu$. This interval then contains precritical points on the critical path that also exist in $\Hub(\nu)$ and the argument can be repeated for $w_1$. This yields that every representation in \eqref{item:combinatorics_reversed} is assumed by a precritical point of $\nu$, the map is injective because the positions of the words $X_l$ indicate each time the point enters $[\tilde{\mu}, \nu]$. 
\end{proof}

\begin{theorem}[Order among kneading sequences]
\label{Thm:transitive}
The relation $\prec$ is a strict partial order among kneading sequences.
\end{theorem}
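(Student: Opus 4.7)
The plan is to verify the two axioms of a strict partial order, namely irreflexivity ($\nu\not\prec\nu$) and transitivity ($\mu\prec\nu$ and $\nu\prec\eta\Rightarrow\mu\prec\eta$); anti-symmetry then follows formally. Throughout I would lean on Theorem~\ref{Thm:Hubbard_tree} for basic properties of Hubbard trees and, above all, on the Inclusion Theorem~\ref{Thm:InclusionTrees} which is the main structural tool.

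For irreflexivity I would argue that the critical value $\nu$ (respectively $\up\nu$ in the $\star$-periodic case) is the unique point of $\Hub(\nu)$ carrying the itinerary $\nu$: by Theorem~\ref{Thm:Hubbard_tree}\eqref{Item:HubEndpoints} the critical value is an endpoint, and Theorem~\ref{Thm:Hubbard_tree}\eqref{Item:Hubinj} together with the generation property \eqref{Item:Hubcrit} prevents any other point from realising this itinerary. A characteristic point, however, must separate $\star\nu$ from $\nu$, so it cannot be $\nu$ itself; hence no characteristic point of $\Hub(\nu)$ has itinerary $\tilde\nu$.

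For transitivity, assume $\mu\prec\nu$ witnessed by a characteristic point $\tau\in\Hub(\nu)\setminus\{\nu\}$ with itinerary $\tilde\mu$, and $\nu\prec\eta$. The point $\tau$ lies on the critical path $[\star\nu,\nu]$, and by Theorem~\ref{Thm:Hubbard_tree}\eqref{Item:PrecDenseHub} I can approximate $\tau$ from both sides by precritical points $w_n\star\nu$ lying on that critical path, with itineraries converging to $\tilde\mu$ (the special situation in which $\tau$ is an endpoint of a Fatou interval is handled via item \eqref{lomu} of the Inclusion Theorem and the bifurcation structure, where $\lo\mu$ plays the role of $\tilde\mu$). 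Applying Theorem~\ref{Thm:InclusionTrees}\eqref{it1}--\eqref{it2} to the pair $(\nu,\eta)$ transports these precritical points to an order-preserving sequence $w_n\star\eta$ inside $\Hub_{\tilde\nu}(\eta)\subset\Hub(\eta)$, sandwiched between two points on the orbit of $\tilde\nu$. The itineraries of $w_n\star\eta$ agree with those of $w_n\star\nu$ and therefore converge to $\tilde\mu$, so these precritical points converge (in the topology of $\Hub(\eta)$) to a point $\tau'$ whose itinerary is $\tilde\mu$.

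It remains to show that $\tau'$ is characteristic in $\Hub(\eta)$, and this is what I expect to be the main obstacle. The orbit of $\tau'$ in $\Hub(\eta)$ does not stay inside $\Hub_{\tilde\nu}(\eta)$: by Theorem~\ref{Thm:InclusionTrees}\eqref{item:it_subtree} it can wander into $[\tilde\nu,\eta]$, and I need to argue that it never lands on the $\eta$-side of $\tau'$. The idea is to compare with $\Hub(\nu)$, where the orbit of $\tau$ stays on the $\star\nu$-side of $\tau$. Under the order-preserving identification provided by the Inclusion Theorem, iterates of $\tau'$ either stay in $\Hub_{\tilde\nu}(\eta)$ (and then mirror the position of the corresponding iterate of $\tau$ in $\Hub(\nu)$) or cross into $[\tilde\nu,\eta]$, in which case they are separated from $\tau'$ by $\tilde\nu$, which itself lies on the $\star\eta$-side of $\tau'$ because $\tau'\in\Hub_{\tilde\nu}(\eta)$ lies between $\star\eta$ and $\tilde\nu$ on the critical path. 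Combining these two cases gives that the full forward orbit of $\tau'$ lies on the $\star\eta$-side, while $\tau'\ne\eta$; this is exactly the defining property of a characteristic point, so $\mu\prec\eta$ and transitivity is established.
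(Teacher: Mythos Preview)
Your overall strategy --- irreflexivity plus transitivity via precritical approximation and the Inclusion Theorem --- is exactly the paper's approach. But two steps in the transitivity argument are not correct as written.

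\textbf{The characteristicity of $\tau'$.} You place $\tau'$ between $\star\eta$ and $\tilde\nu$ on the critical path, so the order along $[\star\eta,\eta]$ is $\star\eta,\ldots,\tau',\ldots,\tilde\nu,\ldots,\eta$. Then $\tilde\nu$ lies on the \emph{$\eta$-side} of $\tau'$, not the $\star\eta$-side; your concluding sentence has the sides reversed. Consequently, if an iterate of $\tau'$ really did land in $[\tilde\nu,\eta]$, it would lie \emph{behind} $\tau'$ and characteristicity would fail. The correct argument (which the paper uses) is that this case never occurs: the precritical points $w_n\star\eta$ transported from $\Hub(\nu)$ are sandwiched between orbit points of $\tilde\nu$ and hence never map behind $\tilde\nu$ before reaching the critical value (this is the content of Theorem~\ref{Thm:InclusionTrees}\eqref{it1},\eqref{it3}); passing to the limit, the orbit of $\tau'$ stays inside $\Hub_{\tilde\nu}(\eta)$. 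Only then does the mirroring with $\Hub(\nu)$ give characteristicity.

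\textbf{The bifurcation case.} When $\nu$ is a standard bifurcation of $\mu$ (so $\tilde\mu=\lo\nu$ bounds a Fatou interval in $\Hub(\nu)$), precritical points do not approximate $\tilde\mu$ from the $\nu$-side, and your reference to item~\eqref{lomu} and ``$\lo\mu$ plays the role of $\tilde\mu$'' does not address this: item~\eqref{lomu} concerns $\lo\nu$ appearing in the larger tree, not the existence of $\up\mu$ there, and using $\lo\mu$ would not give $\mu\prec\eta$ by definition. The paper treats this case separately: if the bifurcation ratio $p/q$ exceeds $2$ then $\up\mu$ is a branch point of $\Hub(\nu)$ and survives in $\Hub(\eta)$ by Theorem~\ref{Thm:InclusionTrees}\eqref{item:branch}; if $p/q=2$ one finds $\up\mu$ as the $\sigma^q$-fixed point on $[\up\nu,\sigma^q\up\nu]\subset\Hub(\eta)$. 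You need an argument of this kind to close the gap.
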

\begin{proof}
Clearly, $\nu \prec \nu$ does not hold because itineraries are unique in a Hubbard tree and $\up\nu$ does not occur on the critical path of $\star$-periodic kneading sequences by construction. Thus, it suffices to prove transitivity of the relation. Let $\nu_1,\nu_2,\nu_3$ be three arbitrary kneading sequences satisfying $\nu_1\prec \nu_2$ and $\nu_2 \prec \nu_3$. We wish to conclude that $\nu_1 \prec \nu_3$. 

First suppose that $\nu_2$ is not a standard bifurcation of $\nu_1$. Then $\up{\nu_1}$ is a limit point of precritical points on the critical path of $\nu_2$. By Theorem~\ref{Thm:InclusionTrees}, every precritical point of $\nu_2$ can be found on the critical path of $\nu_3$, hence, $\up{\nu_1}$ lies on the critical path of $\nu_3$. 

Now let $\nu_2$ be a standard bifurcation of period $p$ with base sequence $\nu_1$ which has period $q$. If $p/q>2$, then $\nu_1$ is a branch point and thus also appears in $\Hub(\nu_3)$ by Theorem~\ref{Thm:InclusionTrees}~\eqref{item:branch}. If $p=2q$, then $[\up\nu_2, \sigma^q \up\nu_2]$ is mapped onto itself under $\sigma^q$ with reverted orientation. Thus, $\nu_1$ appears as fixed point. 

Since none of the precritical points also found in $\Hub(\nu_2)$ map behind $\nu_2$ (resp. $\up{\nu_2}$) in $\Hub(\nu_3)$, the sequence $\up{\nu_1}$ does not map behind $\up{\nu_2}$ either. Thus since $\up{\nu_1}$ is characteristic in $\Hub(\nu_2)$, it therefore has to be characteristic in $\Hub(\nu_3)$, too. Hence, $\nu_1 \prec \nu_3$.
\end{proof}

\subsection{Extended Hubbard trees}

We say two kneading distinct sequences $\nu$ and $\mu$ are \emph{comparable} if $\nu\prec \mu$ or $\mu \prec \nu$. In that case, the inclusion theorem tells us that the Hubbard tree of the smaller sequence is in some sense included inside the tree of the larger sequence, so one cne could interpret the partial order ``$\prec$'' on kneading sequences as subset relation ``$\subset$'' on Hubbard trees.

However, if the sequences are not comparable, we can extend each tree so as to contain a point whose itinerary is the other kneading sequence. 
To do this, let $\nu$  be a kneading sequence and $\mu \in X_\nu \cap \sym^\infty \setminus \{\up\nu \}$ be an itinerary. Just as for the critical path $[\star\nu, \nu]$  (see  \cite[Section 2.3]{MalteDierkTrees}), we can construct the interval $[\mu, \nu]$ recursively: first by determining the first precritical point $w\star\nu$ between the itineraries $\mu$ and $\nu$, and by repeating the step between neighboring itineraries if their symbolic difference is finite, otherwise by adding a Fatou interval. After taking the closure, one obtains a linearly ordered set $[\mu, \nu]$ of itineraries (and possibly Fatou intervals) homeomorphic to the unit interval $[0,1]$. 

\begin{definition}[Extended Hubbard tree]
Let $\nu$  be a kneading sequence and $\mu \in X_\nu \cap \sym^\infty \setminus \{\up\nu\}$. We define the \emph{Hubbard tree of $\nu$ extended to $\mu$} as the following set in $X_\nu \cup F_\nu$:
\[
\Hub(\nu) [\mu] := \Hub(\nu) \cup \bigcup_{n\ge 0}^\infty \sigma^n( [\mu, \nu])
\;.
\]
\end{definition}

\begin{proposition}[Extended Hubbard tree]
The set $ \Hub(\nu) [\mu]$ is $\sigma$-invariant. If $\mu \in \Hub(\nu)$, then $\Hub(\nu) [\mu]=\Hub(\nu)$. If $\mu$ is (pre)periodic, then $\Hub(\nu) [\mu]$ is a tree. 
\end{proposition}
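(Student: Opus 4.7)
The plan is to prove the three assertions in order, with the tree property in the third assertion being the only substantial point.

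For $\sigma$-invariance, Theorem~\ref{Thm:Hubbard_tree}~\eqref{item:tree} gives $\sigma(\Hub(\nu)) = \Hub(\nu)$, while $\sigma(\sigma^n([\mu,\nu])) = \sigma^{n+1}([\mu,\nu])$ merely reindexes the iterated terms. Hence
\[
\sigma(\Hub(\nu)[\mu]) = \Hub(\nu) \cup \bigcup_{n\geq 1} \sigma^n([\mu,\nu]) \subseteq \Hub(\nu)[\mu].
\]
For the second assertion, if $\mu \in \Hub(\nu)$ then both endpoints of $[\mu,\nu]$ lie in the tree $\Hub(\nu)$. The recursive construction of $[\mu,\nu]$ produces only precritical points and Fatou intervals between $\mu$ and $\nu$, and by uniqueness of paths in a tree together with Theorem~\ref{Thm:Hubbard_tree}~\eqref{Item:PrecDenseHub}, the resulting arc coincides with the unique path from $\mu$ to $\nu$ inside $\Hub(\nu)$. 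Forward invariance of $\Hub(\nu)$ then gives $\sigma^n([\mu,\nu]) \subseteq \Hub(\nu)$ for all $n$, so the union adds nothing new.

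For the tree property when $\mu$ is (pre)periodic, the key fact is that the forward orbit $O := \{\mu_k := \sigma^k \mu : k\geq 0\}$ is a finite set. First I would decompose $[\mu,\nu] = [\mu,p] \cup [p,\nu]$, where $p$ is the first point at which $[\mu,\nu]$ enters $\Hub(\nu)$ when traversed from $\mu$: then $[p,\nu] \subseteq \Hub(\nu)$ (by the preceding paragraph applied with $p$ in place of $\mu$), while $[\mu,p] \cap \Hub(\nu) = \{p\}$. Since $\star\nu \in \Hub(\nu)$, the critical point cannot be an interior point of $[\mu,p]$. I would then show
\[
\Hub(\nu)[\mu] = \Hub(\nu) \;\cup\; \bigcup_{\mu_k \in O \,\sm\, \Hub(\nu)} [\mu_k,p_k],
\]
where $[\mu_k,p_k]$ denotes the unique arc in the ambient space from $\mu_k$ to its first entry $p_k$ into $\Hub(\nu)$. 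For ``$\supseteq$'' one iterates $[\mu,p]$ forward and recovers each protruding arc; for ``$\subseteq$'' one writes $\sigma^n([\mu,\nu]) = \sigma^n([\mu,p]) \cup \sigma^n([p,\nu])$, uses $\sigma^n([p,\nu]) \subseteq \Hub(\nu)$, and checks that the portion of $\sigma^n([\mu,p])$ lying outside $\Hub(\nu)$ is contained in $[\mu_n,p_n]$. Because $O$ is finite, the right-hand side is $\Hub(\nu)$ with finitely many arcs each attached at a single endpoint, and hence a tree.

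The main obstacle I expect is the control of $\sigma^n([\mu,p])$ when some intermediate iterate $\sigma^j([\mu,p])$ passes through the critical point in its interior: there $\sigma$ is no longer injective by Theorem~\ref{Thm:Hubbard_tree}~\eqref{Item:Hubinj}, and the resulting image path can fold back through $\nu$ into $\Hub(\nu)$. The plan is to argue that any such folded portion lies in the forward-invariant set $\Hub(\nu)$, so that the only part of the image protruding from $\Hub(\nu)$ is a single initial arc ending at $\mu_{n+1}$, realized as (a sub-arc of) $[\mu_{n+1},p_{n+1}]$. Finiteness of $O$ is what ultimately bounds the total number of protruding arcs; without (pre)periodicity they could accumulate and the union would no longer form a tree.
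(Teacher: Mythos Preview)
Your proposal is correct and follows essentially the same approach as the paper: both argue invariance trivially, both use the ``convexity'' of the recursive interval construction for the second claim, and for the tree property both show that each $\sigma^n([\mu,\nu])$ contributes at most one new arc attached to $\Hub(\nu)$ at a single point, so that finiteness of the orbit of $\mu$ yields a tree. Your treatment is in fact more careful than the paper's terse sketch---in particular your observation that the critical point cannot lie in the interior of any protruding arc $[\mu_j,p_j]$ (since $\star\nu\in\Hub(\nu)$), which makes the induction on $\sigma^n([\mu,p])\subseteq[\mu_n,p_n]\cup\Hub(\nu)$ go through cleanly, is exactly what underlies the paper's one-line claim that each iterate contains ``at most one interval not already in $\Hub(\nu)$''.
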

\begin{proof}
The set is invariant by construction. Since the construction of the interval $[\mu, \nu]$ is recursive, if two points of $[\mu, \nu]$ appear in $\Hub(\nu)$, so do all points between it. So if $\mu$ lies already in the tree, nothing new is created. Each of the sets  $\sigma^n( [\nu, \mu])$ contain at most one interval not already in $\Hub(\nu)$, which goes from a postcritical point to an orbit point of $\mu$. And each interval added adds at most one branch to the tree. Since $\mu$ has a finite orbit, $\Hub(\nu)  [\mu]$ is a tree. 
\end{proof}

The statement should also hold for $\mu$ that are neither periodic nor preperiodic, but the result stated suffices for our purposes.

\subsection{The Branch Theorem}

We will need a technical lemma from \cite{HenkDierkPreprint} which makes use of the symbolic notion of $\rho$-functions: For every infinite sequence $\nu=\1\nu_2.... \in \{\0,\1\}^\infty$ define $\rho_\nu(n):=\inf\{k>n: \, \nu_k \neq \nu_{k-n}\}$ and $\orb_{\rho}(k):=\{k, \rho(k),\rho^2(k),...\}$. 

\begin{lemma}[Combinatorics of $\rho$-orbits]
Let $\nu=\1\nu_2.... \in \{\0,\1\}^\infty$ and set $\rho=\rho_\nu$. If $m \in \orb_{\rho}(1)$ and $s<m<\rho(s)$, then $m \in \orb_\rho(\rho(m-s)-(m-s))$. 
\end{lemma}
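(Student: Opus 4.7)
The plan is to prove $m\in\orb_\rho(u)$ where $u:=\rho(m-s)-(m-s)$, by strong induction on the depth $r$ of $m$ in $\orb_\rho(1)$, meaning the least $r$ with $\rho^r(1)=m$. The first step is to unpack the hypotheses symbolically. Writing $t:=m-s$, the inequality $s<m<\rho(s)$ is equivalent to the statement that the block $\nu_{s+1}\nu_{s+2}\ldots\nu_m$ coincides with the initial block $\nu_1\nu_2\ldots\nu_t$; in particular $\nu_m=\nu_t$ and more generally $\nu_{s+i}=\nu_i$ for $1\le i\le t$. This self-similarity is the bridge that will allow me to transport $\rho$-orbit relations between positions in $[1,t]$ and positions in $[s+1,m]$. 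Similarly, the number $u$ is characterized by $\nu_j=\nu_{j-t}$ for $t<j<u+t$ and $\nu_{u+t}\neq\nu_u$.

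For the inductive step I would write $m=\rho(k)$ with $k\in\orb_\rho(1)$, $k<m$, and split on whether $k>s$ or $k\le s$. If $k>s$, then $s<k<m<\rho(s)$, so the inductive hypothesis applies to $k$ and gives $k\in\orb_\rho(\rho(k-s)-(k-s))$. Using the matching $\nu_{s+i}=\nu_i$ on the relevant interval, I would identify $\rho(k-s)-(k-s)$ with a point already lying on $\orb_\rho(u)$, and conclude that $k$, and hence $m=\rho(k)$, lies in $\orb_\rho(u)$.

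If on the other hand $k\le s$, then $m-k\ge t$ and $\rho(k)=m$ forces $\nu_{m-k}\neq\nu_m=\nu_t$. Combined with the shift-by-$t$ agreement $\nu_j=\nu_{j-t}$ for $t<j<u+t$, this constrains $m-k$ so tightly that it must equal $u+t$, or more generally must lie on the forward $\rho$-chain from $u$ that terminates at $m$. Applying one final $\rho$ then gives $m\in\orb_\rho(u)$, completing the induction. The base case $m=\rho(1)$ is a direct check: then the condition $s<m<\rho(s)$ together with $\rho(1)=m$ forces $u=\rho(t)-t$ and $m$ to stand in the required chain relation by inspection.

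The main obstacle is the case $k>s$, where one needs the technical identification that $\rho(k-s)-(k-s)$ actually lies on $\orb_\rho(u)$ rather than on some unrelated orbit. Verifying this requires showing that the self-similarity imposed by $s<m<\rho(s)$ transports $\rho$-orbits faithfully under the shift $i\mapsto s+i$ on the matching block, and this in turn hinges on the structural fact that $\rho_\nu$-orbits are invariant under local block symmetries of the sequence $\nu$. I would isolate this as a subsidiary claim and verify it by minimality of the $\rho$-function, which is the combinatorial heart of the argument.
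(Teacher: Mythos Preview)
The paper does not prove this lemma at all: its entire proof reads ``See \cite[Lemma~4.3(1)]{HenkAlexExist}.'' So there is no argument in the paper to compare against; the result is imported wholesale from Bruin--Kaffl--Schleicher.

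As for your proposal: you have correctly unpacked the hypotheses (the block identity $\nu_{s+i}=\nu_i$ for $1\le i\le t$ coming from $m<\rho(s)$, and the characterization of $u=\rho(t)-t$), and those are indeed the only raw ingredients available. But the inductive scheme you sketch does not obviously close. In the case $k>s$ your inductive hypothesis yields $k\in\orb_\rho(v)$ with $v=\rho(k-s)-(k-s)$, and you then need $v\in\orb_\rho(u)$ (or some equivalent link) to conclude $m=\rho(k)\in\orb_\rho(u)$. That link is a statement of exactly the same shape as the lemma itself, relating the $\rho$-orbit of one shifted quantity to that of another, and your ``subsidiary claim'' that block self-similarity transports $\rho$-orbits under the shift $i\mapsto s+i$ is doing all the work. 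Invoking ``minimality of the $\rho$-function'' is not a proof; it is a restatement of what needs to be shown. Without an explicit argument here the induction is circular.

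The case $k\le s$ is also thinner than you suggest. From $\nu_{m-k}\ne\nu_m=\nu_t$ and the shift-by-$t$ matching you can extract $\nu_{m-k}=\nu_{s-k}$ and (via $\rho(k)=m$) $\nu_{s-k}=\nu_s$, hence $\nu_s\ne\nu_t$; but this alone does not pin $m-k$ down to $u+t$ or place it on the $\rho$-chain from $u$. More bookkeeping is required, and the phrase ``or more generally must lie on the forward $\rho$-chain from $u$'' is precisely the conclusion you are trying to establish. If you want to pursue this route, you should try to prove directly (without induction on depth) that $u\le m$ and that $\rho$ applied iteratively to $u$ cannot skip over $m$; the argument in \cite{HenkAlexExist} proceeds along those lines.
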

\begin{proof} See \cite[Lemma~4.3(1)]{HenkAlexExist}.  \end{proof}

As explained in \cite{HenkAlexExist}, one can interpret the $\rho$-function in the following way: The depth of the first precritical point between $\nu_1...\nu_{s-1}\star\nu$ and $\nu$ equals $\rho(s)$. Consequently, $\rho(m-s)-(m-s)$ is the depth of the first precritical point between $\nu^{m-s}$ and $\nu$. And $\orb_\rho(1)$ simply is the internal address of $\nu$. We can thus restate the lemma as:

\begin{lemma}[Combinatorics of $\rho$-orbits, restated]
\label{Lem:rhocombs}
Let $m$ be an element of the internal address of a non-periodic kneading sequence $\nu$. Moreover, let $\zeta \in \Hub(\nu)$ be a precritical point of depth $s<m$ such that $[\zeta, \nu]$ is mapped injectively under $\sigma^m$. Then $\rho_m \in [\nu^{m-s}, \nu]$, where $\rho_m$ is the precritical point of depth $m$ that coincides with $\nu$ for more than $m$ entries. 
For $\star$-periodic kneading sequences, the statement still holds if $m$ is an element of the internal address of $\lo\nu$. 
\end{lemma}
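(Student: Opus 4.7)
The plan is to derive the restated lemma as a translation of the preceding symbolic lemma \cite[Lemma~4.3(1)]{HenkAlexExist} into the geometric language of Hubbard trees, using the dictionary between the $\rho$-function and precritical points stated in the paragraph preceding the statement.

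For the hypotheses: $m$ being an element of the internal address of $\nu$ is by definition $m \in \orb_\rho(1)$. The injectivity of $\sigma^m$ on $[\zeta, \nu]$ is equivalent to saying that no precritical point of depth $\le m$ lies in the interior of $[\zeta, \nu]$. When $\zeta$ is the canonical depth-$s$ precritical point $\nu_1\ldots\nu_{s-1}\star\nu$ on the critical path, the dictionary says that the depth of the first precritical point strictly between $\zeta$ and $\nu$ is $\rho(s)$, so the injectivity assumption is equivalent to $\rho(s) > m$. For a general $\zeta\in\Hub(\nu)$ of depth $s$, the interval $[\zeta,\nu]$ is obtained as an image of a subarc of the critical path under the forward-orbit generation in Theorem~\ref{Thm:Hubbard_tree}~\eqref{Item:Hubcrit}, so the same inequality $\rho(s)>m$ characterises injectivity. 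Together with the trivial $s<m$ we are exactly in the hypothesis of \cite[Lemma~4.3(1)]{HenkAlexExist}.

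Applying that lemma yields $m \in \orb_\rho\bigl(\rho(m-s)-(m-s)\bigr)$. By the dictionary, $\rho(m-s)-(m-s)$ is the depth of the first precritical point between $\nu^{m-s}$ and $\nu$. The full $\rho$-orbit starting from this depth enumerates the depths of the iterated precritical points on $[\nu^{m-s},\nu]$ that coincide with $\nu$ on ever longer initial segments, via the same recursive construction of precritical points on a subarc ending at $\nu$ that is used in \cite[Section~2.3]{MalteDierkTrees}, now applied to $[\nu^{m-s},\nu]$ in place of $[\star\nu,\nu]$. Membership of $m$ in this orbit therefore asserts the existence, on $[\nu^{m-s},\nu]$, of the precritical point of depth $m$ whose itinerary agrees with $\nu$ on more than $m$ entries, that is, of $\rho_m$.

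The main obstacle is verifying this last translation step, namely that the symbolic iteration of $\rho$ starting from the ``first-difference depth'' $\rho(m-s)-(m-s)$ matches entry by entry the depths of the precritical points in $[\nu^{m-s},\nu]$ produced by the geometric recursion from \cite{MalteDierkTrees}; this should follow from a direct induction comparing the two recursions, but requires care because the geometric construction passes through images of the critical path and one must rule out ``extra'' or ``missing'' precritical points. The extension to $\star$-periodic $\nu$ requires no additional work: every assertion is made in terms of $\rho$-combinatorics and precritical points of the $\{\0,\1\}$-valued substructure of $\Hub(\nu)$, and since by hypothesis $m$ lies in the internal address of $\lo\nu$, the whole argument applies verbatim with $\nu$ replaced by $\lo\nu$.
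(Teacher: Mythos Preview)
Your approach is correct and matches the paper's: the paper gives no separate proof of this lemma, presenting it explicitly as a restatement of the preceding symbolic lemma via the dictionary described in the paragraph between them. One minor simplification: your concern about ``general $\zeta$'' is unnecessary, since the injectivity hypothesis forces $\diff(\zeta,\nu)>m>s-1$, which already pins down the first $s-1$ entries of $\zeta$ as $\nu_1\ldots\nu_{s-1}$ and hence $\zeta=\nu_1\ldots\nu_{s-1}\star\nu$; the remaining agreement on entries $s+1,\ldots,m$ is then exactly the condition $\rho(s)>m$.
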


\begin{lemma}[Monotonicity of internal address]
\label{Lem:int_add}
Let $\nu$ be a kneading sequence whose internal address starts with $S_1-S_2-\ldots  -S_{n}-S_{n+1}$ for some $n \ge 1$. Let $\mu$ be the $\star$-periodic sequence with internal address $S_1-S_2-\ldots  -S_n$. Then $\mu \prec \nu$.

Moreover, if $\nu$ is $\star$-periodic and the internal address of its lower sequence $\lo\nu$ starts with $S_1'-S_2'-\ldots  -S_{m}'$, then $\mu' \prec \nu$ where $\mu'$ is the $\star$-periodic sequence with internal address $S_1'-S_2'-\ldots  -S_m'$.

\end{lemma}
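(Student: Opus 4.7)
The plan is to prove both statements by induction on $n$ (respectively $m$), constructing $\up\mu$ (resp.\ $\up{\mu'}$) as a characteristic periodic point in $\Hub(\nu)$ by a fixed-point argument for $\sigma^{S_n}$ on a sub-arc of the critical path $[\star\nu,\nu]$.

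For the base case $n=1$, the kneading sequence $\mu$ has internal address $1$, so $\mu=\ovl\star$ and $\up\mu=\ovl\1=\alpha$. By Theorem~\ref{Thm:Hubbard_tree}~\eqref{item:alpha}, $\alpha$ lies on $[\star\nu,\nu]$; and since the internal address of $\nu$ has at least two entries, $\alpha\neq\nu$, so $\alpha$ is a fixed point strictly between $\star\nu$ and $\nu$ with orbit $\{\alpha\}$, hence characteristic.

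For the inductive step, put $p:=S_n$. Because $p$ lies in the internal address of $\nu$, the $\rho$-function framework underlying Lemma~\ref{Lem:rhocombs} yields a unique precritical point $\zeta\in(\star\nu,\nu)$ of depth $p$ on the critical path; set $J:=[\zeta,\nu]$. First I would verify that (i) no precritical point of depth between $p+1$ and $S_{n+1}-1$ lies in the interior of $J$, hence $\sigma^p$ is injective on $J$ with $\sigma^p(\zeta)=\nu$ and $\sigma^p(\nu)=\nu^p$; and (ii) every point in $J\setminus\{\zeta\}$ starts with the symbols $\nu_1\ldots\nu_p$, since it lies on the $\nu$-side of $\zeta$. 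The heart of the argument is then to produce a fixed point $x^\ast\in J$ of $\sigma^p$. Because $\nu$ is an endpoint of $\Hub(\nu)$ (Theorem~\ref{Thm:Hubbard_tree}~\eqref{Item:HubEndpoints}), the image $\sigma^p(J)=[\nu,\nu^p]$ exits $\nu$ along $J$, so $J$ and $[\nu,\nu^p]$ share an initial sub-arc from $\nu$. An intermediate value argument applied either to $\sigma^p$ on the longest sub-interval of $J$ returning to $J$, or to $(\sigma^p)^{-1}$ in the case $J\subset[\nu,\nu^p]$, supplies such an $x^\ast$. Its itinerary has period dividing $p$ and begins with $\nu_1\ldots\nu_p$, hence equals $\ovl{\nu_1\ldots\nu_p}=\nu_n=\up\mu$. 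Finally $x^\ast\in[\star\nu,\nu]\setminus\{\nu\}$ separates $\star\nu$ from $\nu$, and since the whole orbit of $x^\ast$ lives in the $\sigma^p$-invariant piece of $J$ on the $\star\nu$-side of $x^\ast$ (as forced by the itineraries of the cyclic shifts of $\up\mu$), $x^\ast$ is characteristic and $\mu\prec\nu$.

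The second statement follows the same scheme with $\nu$ replaced by $\lo\nu$: the $\star$-periodic clause of Lemma~\ref{Lem:rhocombs} furnishes precritical points of depth $S'_m$ on the critical path of $\Hub(\nu)$, and the fixed-point construction then produces characteristic points with itineraries $\up{\mu'}$. The main obstacle I foresee is the fixed-point existence in the case where $\nu^p$ (resp.\ its analog for $\lo\nu$) branches off the critical path: verifying that the intermediate value argument actually succeeds amounts to comparing the location on $J$ of this branch-off point $y$ with that of its $\sigma^p$-preimage in $J$, which is the only real case analysis required.
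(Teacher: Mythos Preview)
Your overall strategy---locate $\up\mu$ as a fixed point of $\sigma^{p}$ (with $p=S_n$) on the interval $J=[\rho_n,\nu]$ via an intermediate-value argument---is the same as the paper's. But the place you flag as ``the only real case analysis required'' is exactly where the proof has genuine content, and you have not supplied it.

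Concretely, when $\nu^{p}$ branches off the critical path at some $y\in J$, your plan is to compare $y$ with its $\sigma^{p}$-preimage $z$ in $J$. But nothing in your setup forces the favourable inequality between $y$ and $z$; a naive intermediate-value argument can fail here because $\sigma^{p}$ may send the sub-arc of $J$ near $\nu$ entirely off into the branch. The paper resolves this by contradiction: it assumes $\up\mu$ is \emph{not} on the critical path, extends $\Hub(\nu)$ to include it, and analyses the branch point $\tau$ where $\up\mu$ splits off. After one application of $\sigma^{p}$, either one lands in the easy expanding case, or the iterates of $\tau$ enter a side branch $Y$ at $\sigma^{p}\tau$. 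In that hard case the paper locates a precritical point $\zeta\in Y$ of depth $s<p$ with $[\zeta,\nu]$ injective under $\sigma^{p}$, and then applies Lemma~\ref{Lem:rhocombs} to conclude $\rho_n\in[\nu^{p-s},\nu]$; this contradicts $\rho_n\in[\star\nu,\tau]$. This use of the $\rho$-orbit combinatorics is the missing ingredient in your sketch, and it is not recoverable from a bare comparison of $y$ and $z$.

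A second gap is your characteristicness argument. The sentence ``the whole orbit of $x^{\ast}$ lives in the $\sigma^{p}$-invariant piece of $J$ on the $\star\nu$-side of $x^{\ast}$, as forced by the itineraries of the cyclic shifts of $\up\mu$'' does not do the job: you need to control the full $\sigma$-orbit, not just the $\sigma^{p}$-orbit, and itinerary information alone does not pin down on which side of $x^{\ast}$ an orbit point lies. The paper instead invokes \cite[Lemma~3.14]{MalteDierkTrees} (every periodic non-endpoint has a characteristic point on its orbit) and then checks that among the orbit points of $\up\mu$ the one closest to $\nu$ is $\up\mu$ itself.

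Finally, note that your induction never invokes the inductive hypothesis; the argument is really a direct one for each $n$, as in the paper.
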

\begin{proof}
Let us first assume that $\nu$ is non-periodic. By the construction of the internal address, the critical path of $\nu$ contains precritical points $\rho_1=\star\nu \prec \rho_2 \prec \ldots   \prec \rho_n \prec \rho_{n+1} \prec \nu$ such that $\rho_j$ has depth $S_j$ and $\rho_{j+1}$ is the precritical point of lowest depth between $\rho_j$ and $\nu$. 

Set $p=S_n$. Let $\mu$ be the $p$-periodic sequence that coincides with $\nu$ for at least $p$ entries. Assume that $\mu$ does not lie on the critical path. Extend the Hubbard tree of $\nu$ to $\mu$. By definition of the internal address, we have $\rho_{n+1}\in [\mu, \nu]$. Let $\tau \in [\rho_n, \rho_{n+1}]$ be the branching point where $\mu$ branches off. The interval $[\mu, \rho_n]$ is injectively mapped by $\sigma^p$ to $[\mu, \nu]$, so the point $\sigma^p \tau$ lies on this interval. It does not lie on $[\tau, \mu]$ though, because this would violate expansivity of the interval $[\tau, \sigma^p \tau]$ which must contain a precritical point. Hence, $\sigma^p \tau \in [\tau, \nu]$. 

The interval $[\rho_n, \nu]$ can be mapped injectively under $\sigma^p$. If $\rho_n$ lies between $\nu$ and $\sigma^p \nu$, then this mapping is expanding and there must be a fixed point on $[\rho_n, \nu]$. Since this point coincides with $\nu$ for at least $S_n$ entries, it has to be $\mu$. 

Otherwise, $\sigma^p\nu$ branches off at $\sigma^p \tau$ and $\sigma^{2p}\tau \in [\sigma^p \tau, \sigma^p \nu]$. Let $Y$ denote this branch at $\sigma^p\nu$. While the interval $[\sigma^p \nu, \sigma^{2p} \tau]$ is injective under $\sigma^p$, all further $\sigma^p$-iterates of $\tau$ lie in $Y$. Since there must lie a precritical point on $[\sigma^p \nu, \sigma^{2p} \tau]$, there must exist a precritical point $\zeta \in Y$ of depth $s<p$. Moreover, we can choose $\zeta$ as close as possible to $\nu$ such that $[\zeta, \nu]$ is injective under $\sigma^p$. Hence, the subtree spanned by the tree points $\zeta, \nu$ and $\tau$ will be homeomorphically mapped by $\sigma^p$ to the subtree spanned by the points $\sigma^{p-s} \nu, \sigma^p \nu, \sigma^p\tau$. We conclude that $\sigma^{p-s}\nu \in Y$ and by Lemma~\ref{Lem:rhocombs} this implies $\rho_n \in [\sigma^{p-s}, \nu]$, but this is impossible because $\rho_n$ already lies on $[\star\nu,\tau]$. 

Hence, we can conclude that $\mu$ lies on the critical path. By \cite[Lemma~3.14]{MalteDierkTrees}, an iterate of $\mu$ is characteristic. Since the orbit point of $\mu$ closest to $\nu$ is $\mu$ itself, we conclude $\mu \prec \nu$. 

For the periodic case we have precritical points $\rho_1'=\star\nu \prec \rho_2' \prec \ldots   \prec \rho_n'  \prec \lo\nu \prec \nu$ such that $\rho_j'$ has depth $S_j'$ and $\rho_{j+1}'$ is the precritical point of lowest depth between $\rho_j$ and $\lo\nu$. The rest follows as before, as everything behind $\lo\nu$ consists of Fatou intervals. This shows the second part of the lemma. Finally note that the internal addresses of $\lo\nu$ and $\nu$ coincide as long as the entries are smaller than the period, so the first part of the lemma also holds for $\star$-periodic $\nu$. 
\end{proof}

\begin{theorem}[The Weak Branch Theorem]
\label{Thm:branch_weak}
Let $\nu$ and $\nu'$ be two kneading sequences such that $k:=\Diff(\nu,\nu')<\infty$. Then there exists a $\star$-periodic kneading sequence $\mu$ such that 
\begin{enumerate}
\item \label{it:branch1} $\mu \preceq \nu$ and $\mu \preceq \nu'$
\item \label{it:branch2} $\Diff(\mu, \nu)\ge k$ and $\Diff(\mu, \nu')\ge k$. 
\end{enumerate}
\end{theorem}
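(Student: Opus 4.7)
The plan is to construct $\mu$ as the $\star$-periodic kneading sequence whose internal address is the longest common initial segment of the internal addresses of appropriately symbolized versions of $\nu$ and $\nu'$. Lemma~\ref{Lem:int_add} then supplies $\mu\preceq\nu$ and $\mu\preceq\nu'$, and a direct comparison of $\up\mu$ with $\pi_e(\nu)$ and $\pi_e(\nu')$ for a suitable $e\in\sym$ will give the $\Diff$ bound.

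Concretely, I would first pick $e^*\in\sym$ attaining the maximum in the definition of $\Diff$, so that $\diff(\pi_{e^*}(\nu),\pi_{e^*}(\nu'))=k$, and set $a:=\pi_{e^*}(\nu)$ and $b:=\pi_{e^*}(\nu')\in\sym^\infty$. Comparing their internal addresses $I(a)=1=S_0\to S_1\to\ldots$ and $I(b)=1=T_0\to T_1\to\ldots$, let $n$ be the largest index with $S_i=T_i$ for all $i\le n$. Unwinding the recursive definition, the two addresses cannot both terminate at $S_n$ (else $a=b$, contradicting $k<\infty$), and the smaller of their next entries—or the sole next entry of whichever address continues—equals $k$; in particular $S_n<k$.

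Let $\mu$ be the $\star$-periodic kneading sequence of period $S_n$ with internal address $1\to S_1\to\ldots\to S_n$, so that $\up\mu$ is exactly the $S_n$-periodic sequence $a_n=b_n$ produced at step $n$ of both recursions, and it agrees with both $a$ and $b$ throughout their first $k-1$ positions. I would then deduce $\mu\preceq\nu$ by invoking the appropriate part of Lemma~\ref{Lem:int_add}: the first part applies when $\nu$ is non-periodic (via $I(\nu)=I(a)$), or when $\nu$ is $\star$-periodic with $a=\up\nu$ and $I(\up\nu)$ extending past $S_n$; the second part applies when $\nu$ is $\star$-periodic with $a=\lo\nu$ (via $I(\lo\nu)=I(a)$); and the remaining degenerate case—$\nu$ being $\star$-periodic with $a=\up\nu$ and $I(\up\nu)$ terminating exactly at $S_n=p$—forces $\mu$ and $\nu$ to share both period and upper sequence, so $\mu=\nu$ and the relation is automatic. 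The same reasoning yields $\mu\preceq\nu'$.

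For the $\Diff$ bound I would choose $e\in\sym$ with $\pi_e(\mu)=\up\mu$ and observe that $\up\mu=a_n$ agrees with $a=\pi_{e^*}(\nu)$ on the first $k-1$ positions by construction. When $\nu$ is non-periodic this $e$ also satisfies $\pi_e(\nu)=a$, immediately giving $\diff(\pi_e(\mu),\pi_e(\nu))\ge k$; when $\nu$ is $\star$-periodic, $\pi_e(\nu)$ differs from $a$ only at multiples of the $\star$-period of $\nu$, and a case analysis using the inclusion $\mu\preceq\nu$ controls the positions at which such a disagreement can appear before position $k$, yielding $\diff(\pi_e(\mu),\pi_e(\nu))\ge k$ and hence $\Diff(\mu,\nu)\ge k$; the analogous argument for $\nu'$ completes the proof. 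The main obstacle will be this last bookkeeping of $\star$-replacements when all three sequences are simultaneously $\star$-periodic of possibly incommensurable periods, where the full force of $\mu\preceq\nu$ must be invoked to ensure that the $\star$-replacements do not destroy the agreement between $\up\mu$ and $\pi_e(\nu)$ before position $k$.
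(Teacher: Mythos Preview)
Your approach is essentially the same as the paper's: pick the $e^*$ realizing $\Diff(\nu,\nu')$, take the longest common initial segment of the internal addresses of $\chi=\pi_{e^*}(\nu)$ and $\chi'=\pi_{e^*}(\nu')$, let $\mu$ be the $\star$-periodic sequence with that internal address, invoke Lemma~\ref{Lem:int_add} for $\mu\preceq\nu$ and $\mu\preceq\nu'$, and read off the $\Diff$ bound from the next entries $S_{n+1},S_{n+1}'$ of the two addresses. The paper dispatches your anticipated ``main obstacle'' in one line by asserting $\Diff(\mu,\nu)\ge\diff(\up\mu,\chi)$ without spelling out the $e$-alignment; your more explicit case analysis plan for the $\star$-periodic situation is prudent but does not represent a different strategy.
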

\begin{remark}
We call the theorem the weak branch theorem because the sequence $\mu$ found may not be maximal. One can show that for each pair of non-comparable $\nu$ and $\nu'$ there exists a $\star$-periodic or preperiodic kneading sequence $\mu$ that satisfies \eqref{it:branch1} and \eqref{it:branch2} but no sequence $\mu' \succ \mu$ satisfies \eqref{it:branch1}. This has been shown for $\star$-periodic kneading sequences in \cite[Theorem~8.16]{HenkDierkPreprint}. 
\end{remark}
\begin{proof}
Let $\chi$ and $\chi'$ be the lower or upper sequences of $\nu$ and $\nu'$, respectively, such that $k=\Diff(\nu,\nu')=\diff(\chi,\chi')$ (with the convention that $\chi=\nu$ if $\nu$ is non-periodic and same for $\nu'$). Since $k< \infty$, the internal addresses of $\chi$ and $\chi'$ must differ at some point. Without loss of generality, we may choose $n$ such that the first $n$ entries  $S_1-\ldots  -S_n$ of the internal addresses of $\chi$ and $\chi'$ are the same but $S_n$ is the last entry in the internal address of $\chi$ or $S_{n+1}' < S_{n+1}$, where $S_{n+1}$ and $S_{n+1}'$ are the $(n+1)$-th entries of the internal address of $\nu$ and $\nu'$, respectively. By definition of the internal address, we then have $k=\diff(\chi,\chi')=S_{n+1}'$. 

Now let $\up\mu$ be the periodic sequence with internal address $S_1-\ldots  -S_n$ and set $\mu$ to be the corresponding kneading sequence. By Lemma~\ref{Lem:int_add}, the sequence $\mu$ satisfies \eqref{it:branch1}. Moreover, $\mu=\nu$ or $\Diff(\mu, \nu)\ge \diff(\up\mu,\chi)=S_{n+1}>k$. And we also have $\Diff(\mu, \nu')\ge\diff(\up\mu, \chi')=S_{n+1}'=k$. This shows \eqref{it:branch2}.
\end{proof}

For the sake of readability, we will no longer point out the difference between a $\star$-periodic kneading sequence $\mu$ in parameter space and its representation as upper sequence $\up\mu$ in the dynamics of a Hubbard tree for the rest of the paper when there is no confusion. 

\Newpage

\section{Core entropy}
\label{Sec:entropy}

\begin{definition}[Core entropy]
\label{Def:KneadingEntropy}
For a kneading sequence $\nu$, let $N_\nu(n)$ denote the number of precritical points of depth $n$ on the critical path $[\star\nu,\nu]$, and let $\log^+(x):=\max(\log x,0)$ for $x\ge 0$. We define \emph{core entropy} of $\nu$ as follows:
\[
h(\nu):=\limsup_{n\to\infty} \frac 1 n \log^+(N_\nu(n))
\;.
\]
\end{definition}

We understand $\log$ as the natural logarithm; other choices of logarithm would change the value by a fixed constant.

If $\nu$ is $\star$-periodic or preperiodic so that $\Hub(\nu)$ is a finite tree, then this definition coincides with all other definitions of core entropy such as via the eigenvalue of the (finite) transition matrix \cite[Lemma~2.3]{DimaEntropy}.

\begin{lemma}[Basic properties of core entropy]
Core entropy of a kneading sequence $\nu$ has the following properties:
\begin{enumerate}
    \item  $0\le h(\nu) \le \log 2$.
\item $N_\nu(n)\le 2^{n-2}$ for $n\ge 2$.
    \item If $\mu \prec \nu$, then $h(\mu)\le h(\nu)$ and even $N_\mu(n)\le N_\nu(n)$ for all $n$.
\end{enumerate}
\label{Lem:entropybasics}
\end{lemma}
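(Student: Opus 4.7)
The plan is to prove (2) first by a direct combinatorial count, then derive the upper half of (1) from it, and finally handle (3) as an immediate application of the Inclusion Theorem.

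For (2), the key observation is that the critical point $\star\nu$ separates $\Hub(\nu)$ into two components whose itineraries begin with $\0$ and $\1$ respectively. Since every non-trivial kneading sequence starts with $\1$ by convention, every point of $[\star\nu,\nu]\setminus\{\star\nu\}$ has an itinerary beginning with $\1$. A precritical point of depth $n$ has the form $w\star\nu$ with $|w|=n-1$, so for $n\ge 2$ the first letter of $w$ is forced to be $\1$, giving $w=\1 w'$ with $|w'|=n-2$. Since distinct tree points correspond to distinct itineraries in $X_\nu$, this yields at most $2^{n-2}$ possible precritical points of depth $n$ on the critical path.

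For (1), the bound $h(\nu)\ge 0$ is immediate from $\log^+\ge 0$. The upper bound $h(\nu)\le\log 2$ then follows directly from (2): one has $\log^+ N_\nu(n)\le (n-2)\log 2$, and dividing by $n$ and taking the $\limsup$ gives the claim.

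For (3), Theorem~\ref{Thm:InclusionTrees}, parts \eqref{it1} and \eqref{it2}, produces an injective map sending each precritical point $w\star\mu$ on the critical path of $\Hub(\mu)$ to a precritical point $w\star\nu$ on the critical path of $\Hub(\nu)$; this map preserves the word $w$, and hence the depth $|w|+1$. Therefore $N_\mu(n)\le N_\nu(n)$ for every $n$, and the inequality $h(\mu)\le h(\nu)$ follows at once from the definition of core entropy. In the $\star$-periodic case the only adjustment is to work with the representation $\up\mu$ prescribed by Definition~\ref{Def:OrderKneadings}. There is no substantial obstacle here: (2) is elementary once the convention on the starting symbol is invoked, and (3) is a direct consequence of the depth-preserving, critical-path-preserving injection already built into the Inclusion Theorem.
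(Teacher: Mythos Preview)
Your proof is correct. For parts (1) and (3) you do exactly what the paper does. For part (2) you take a genuinely different route: you argue by a direct symbolic count, using that any point on $(\star\nu,\nu]$ has itinerary beginning with $\1$, so a depth-$n$ precritical point $w\star\nu$ on the critical path has $w=\1 w'$ with $|w'|=n-2$, giving at most $2^{n-2}$ possibilities. The paper instead argues dynamically: the unique lowest-depth precritical point (of depth $n_1\ge 2$) cuts the critical path into two subintervals, each of which then contains a unique next precritical point, and so on --- each subdivision doubles the count while increasing depth by at least one, yielding $N(n)\le 2^{n-2}$. The paper's own remark immediately after the lemma acknowledges that the elementary counting argument (stated there with the cruder bound $2^{n-1}$) also works; your version sharpens that remark to recover the full $2^{n-2}$. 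Your approach is shorter and more transparent; the paper's recursive argument has the advantage, as noted there, of generalizing to higher degrees where the alphabet bound no longer matches the dynamical bound.
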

\begin{proof}
Recall that every interval in $\Hub(\nu)$ that does not contain the critical point is mapped injectively. If $n_1$ is the lowest depth of precritical points on the interior of the critical path, then $n_1\ge 2$ and the precritical point of depth $n_1$ must be unique. It divides the critical path into two subintervals. Each will be iterated injectively at least $n_1+1$ times, so in each of the subintervals there exists a unique precritical point of next higher depth (which may be different on the two subintervals). 

In each subdivision step, the number of subintervals doubles, while the depth increases by one or more. Therefore, $N(n)\le 2^{n-2}$, and this implies the upper bound on $h(\nu)$. 

The final claim follows directly from Theorem~\ref{Thm:InclusionTrees} \eqref{it1}: every precritical point $w\star\mu$ on the critical path of $\Hub(\mu)$ has an analogue $w\star\nu$ on the critical path of $\Hub(\nu)$ of equal depth, so for each depth the critical path in $\Hub(\nu)$ has at least as many precritical points as the critical path in $\Hub(\mu)$. In other words, $N_\mu(n)\le N_\nu(n)$ for all $n$, and this implies the claim on entropy.
\end{proof}

\begin{remark}
The $\log 2$ bound also simply follows from the fact that there are at most $2^{n-1}$ precritical points of depth $n$ that can be formed with the alphabet $\{\0,\1\}$. However, the proof presented here has the advantage that it generalizes to higher degrees; a systematic discussion can be found in \cite{MalteDierkTranscEntropy}. 
\end{remark}

The upper bound is sharp and assumed by a unique kneading sequence:
\begin{proposition}[Entropy $\log 2$]
\label{Prop:uniquemax}
$\1\overline{\0}$ is the unique kneading sequence with core entropy $\log 2$. More precisely, if $\nu\neq1\ovl\0$ and $s$ is the position of the second $\1$ in a kneading sequence $\nu$, then $h(\nu)\le \log 2 -2^{-s}/s $.
\end{proposition}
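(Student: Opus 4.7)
The plan is first to show directly that $\nu_0 := \1\overline{\0}$ achieves equality in the inequality $h(\nu) \leq \log 2$ of Lemma~\ref{Lem:entropybasics}, and second to quantify how much entropy is lost for any other $\nu$ in terms of the position $s$ of its second $\1$.

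For $\nu_0$ itself, a direct computation shows that its $\rho$-function (from Lemma~\ref{Lem:rhocombs}) satisfies $\rho_{\nu_0}(k) = k+1$ for all $k \geq 1$. Substituted into the recursive subdivision of the critical path employed in the proof of Lemma~\ref{Lem:entropybasics}, each subinterval then splits at precisely the next depth level, producing exactly $N_{\nu_0}(n) = 2^{n-2}$ precritical points at depth $n$, and hence $h(\nu_0) = \log 2$.

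For the strict inequality with a quantitative loss, fix $\nu \neq \nu_0$ whose second $\1$ appears at position $s$, so that $\nu_1 = \nu_s = \1$ while $\nu_2 = \cdots = \nu_{s-1} = \0$. In the recursive subdivision, I would focus on the subinterval containing the critical value $\nu$ (the ``rightmost'' subinterval at each stage). This subinterval is iterated forward along the postcritical orbit, and the symbolic comparison determining when it splits is whether $\sigma^{k-1}\nu$ and $\sigma^k \nu$ start with opposite symbols, i.e., whether $\nu_k \neq \nu_{k+1}$. For the reference sequence $\nu_0$ this occurs at every step, while the match $\nu_1 = \nu_s$ for our $\nu$ kills the splitting event at relative iterate $s-1$ along the rightmost branch. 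Propagating this observation through the subdivision tree by applying Theorem~\ref{Thm:InclusionTrees} to each subinterval, I would show that every $s$ consecutive depth levels multiply the total count by at most $2^s - 1$ rather than the maximum $2^s$.

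This yields the bound $N_\nu(n) \leq C_s (2^s - 1)^{n/s}$ for some constant $C_s > 0$, and therefore
\[
h(\nu) \;\leq\; \frac{1}{s}\log(2^s-1) \;=\; \log 2 + \frac{1}{s}\log(1-2^{-s}) \;\leq\; \log 2 - \frac{2^{-s}}{s},
\]
using the elementary estimate $\log(1-x) \leq -x$ for $x\in(0,1)$. In particular $h(\nu) < \log 2$ whenever $\nu\neq\nu_0$, which establishes uniqueness. The main obstacle is verifying the ``one missing split per $s$ levels'' claim \emph{uniformly} across the subdivision tree rather than only near its root; doing so requires a careful combinatorial analysis using the $\rho$-function identity from Lemma~\ref{Lem:rhocombs} to track how the pattern $\nu_1 = \nu_s$ recurs in the forward iterates of every rightmost subinterval, not merely the initial critical path.
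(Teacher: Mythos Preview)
Your target bound and the final chain of inequalities are correct, and the first part (that $\nu_0=\1\ovl\0$ has $N(n)=2^{n-2}$) is fine. The gap is exactly where you flag it: the claim that \emph{every} window of $s$ consecutive depth levels loses at least one split, uniformly across the subdivision tree. Your argument only treats the rightmost branch (the one tracking $\nu$), and the tools you invoke to extend it are not apt. Theorem~\ref{Thm:InclusionTrees} compares the trees of two \emph{different} kneading sequences $\mu\prec\nu$; it says nothing about the internal subdivision structure of a single tree, so it cannot be applied ``to each subinterval'' as you suggest. Likewise Lemma~\ref{Lem:rhocombs} is a statement about entries of the internal address and does not obviously propagate your missing-split observation to arbitrary branches.

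The paper bypasses the subdivision bookkeeping entirely with a forbidden-subword argument. Set $w=\1\0\ldots\0$ of length $s$; since $\nu_s=\1$, the sequence $\nu$ does not begin with $w$. The key observation is that in the recursive construction of precritical points on the critical path, every new symbol appended comes from comparing $\nu$ with some shift $\sigma^t\nu$, and since $\nu$ always starts with $\1$ but never with $w$, the block $w$ cannot occur anywhere in the itinerary of a precritical point before the $\star$. Partitioning a length-$n$ word into $\lceil n/s\rceil$ blocks of length at most $s$ then gives at most $2^s-1$ choices per block, hence $N(n+1)\le(2^s-1)^{n/s+1}$ and the same final inequality you wrote. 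This is both shorter and avoids the uniform-over-all-branches difficulty that your approach leaves open.
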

\begin{proof}
One can check that for $\nu=\1\overline{\0}$ all possible $2^{n-2}$ precritical points of depth $n$ will be constructed on the critical path, hence its core entropy is $\log 2$. (It is well known that this kneading sequence is realized by the real quadratic polynomial $x\mapsto x^2-2$ on $[-2,2]$ or equivalently by $x\mapsto 2x^2-1$ on $[0,1]$.)

However if $\nu \neq \1\overline{\0}$,  let $s\ge 2$ be the position of its second entry equal to $\1$ and define the word $w=\1\0\ldots  \0$ of length $s$. 
Then all $2^{s-1}+1$ precritical points of depths at most $s$  occur on the critical path of $\nu$, but the precritical point $w\star\nu$ of depth $s+1$ does not. Moreover, we claim that the sequence $w$ never occurs anywhere in the itinerary of any precritical point on the critical path before the $\star$ symbol. This is because precritical points are created by comparing $\nu$ with an iterate of $\nu$, but the former always starts with $\1$ but does not start with the word $w$. 

This claim implies that if we divide a word of length $n$ of a precritical point on the critical path into $\lceil n/ s \rceil$ subwords of length at most $s$, then there are at most $2^{s}-1$ possibilities to choose each subword. Thus,
\[
N(n+1)\le (2^s -1)^{n/s +1}
\]
and
\[
h(\nu) \le \lim_{n \to \infty}  \frac 1 n \log\left( (2^s -1)^{n/s +1}\right)=\log(2^s-1)/s   < \log 2 -2^{-s}/s 
\;.
\]
\end{proof}

\subsection{Core entropy on the Hubbard tree}

We have defined core entropy only using precritical points on the critical path. A different possibility would be to define $N_H(n)$ as the number of precritical points on the entire Hubbard tree $\Hub(\nu)$. This leads to the alternate definition
\[
h_{H}(\nu):=\limsup_{n\to\infty} \frac 1 n \log^+(N_H(n)).
\]
We obviously have $0\le h(\nu)\le h_H(\nu) \le \log 2$. If the Hubbard tree is finite, then $h=h_H$, see Lemma~\ref{Lem:FiniteTrees}. However, if the tree is infinite, it can happen that $h<h_H$. As explained in \cite[Example 7]{MalteDierkTrees}, a kneading sequence where every finite word $w$ over $\{\0,\1\}$ occurs somewhere in the sequence has the property that $w\star\nu$ is a precritical point on the Hubbard tree, so $h_H(\nu)=\log 2$. On the other hand, $h(\nu)<\log 2$ by Proposition~\ref{Prop:uniquemax}. There are infinitely many kneading sequences with $h_H(\nu)=\log 2$, while $h(\nu)$ may be arbitrarily small positive. This and other possible definitions of core entropy are discussed in \cite{MalteDierkTranscEntropy}.

\Newpage

\section{Estimates on precritical points}
\label{Sec:estimates}

\begin{definition}[Special classes of kneading sequences]
\label{Def:SpecialKneadings}
We define the following classes of kneading sequences.
\begin{itemize}
\item
A sequence $\nu$ is called \emph{recurrent} if it is not periodic but $\diff(\nu,\sigma^n\nu)$ is unbounded.  Otherwise, $\nu$ is called $\emph{non-recurrent}$.
\item 
Let $\kappa(\nu)\in\N\cup\{+\infty\}$ be the number of endpoints of the Hubbard tree $\Hub(\nu)$. We say that $\nu$ is \emph{tree-finite} if $\kappa(\nu)<\infty$, and \emph{tree-infinite} otherwise.
\item
we say that $\nu$ is \emph{uniformly expanding with parameter $\lambda\in\N$} if, whenever $\rho$ and $\rho'$ are two precritical points on the critical path with equal depth $n$, then the image of $[\rho,\rho']$ after $n+\lambda$ iterations  contains the critical path. 
\end{itemize}
\end{definition}

In our definition, we consider periodic sequences as non-recurrent, while preperiodic sequences are clearly non-recurrent.

\begin{lemma}[Finite trees]
\label{Lem:FiniteTrees}
If $\nu$ is tree-finite, then $\displaystyle\Hub(\nu)=\bigcup_{k=0}^{\kappa(\nu)-1} \sigma^k[\star\nu,\nu]$ and $h_H(\nu)=h(\nu)$. 
\end{lemma}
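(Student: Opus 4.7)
The plan is to prove the two statements in turn, both leveraging Theorem~\ref{Thm:Hubbard_tree}. For the decomposition of $\Hub(\nu)$, the essential input is the characterization of endpoints in \eqref{Item:HubEndpoints} together with the identity $\Hub(\nu) = \bigcup_{k=0}^\infty \sigma^k[\star\nu,\nu]$ from \eqref{Item:Hubcrit}. For the entropy identity, I reduce the count of precritical points in $\Hub(\nu)$ to a count along the critical path, up to a bounded shift in depth.

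For the first statement, set $U := \bigcup_{k=0}^{\kappa(\nu)-1} \sigma^k[\star\nu,\nu]$. The inclusion $U \subseteq \Hub(\nu)$ is immediate from $\sigma$-invariance. For the converse, I first note that $U$ is connected: each summand $\sigma^k[\star\nu,\nu]$ contains $\sigma^{k-1}\nu$ as the image of $\star\nu$ and $\sigma^k\nu$ as the image of $\nu$, so consecutive summands share at least the orbit point $\sigma^k\nu$. Being a closed connected subset of the topological tree $\Hub(\nu)$, the set $U$ is itself a subtree. Since $U$ contains each endpoint $\sigma^k\nu$ of $\Hub(\nu)$ for $k=0,\ldots,\kappa(\nu)-1$, and since a finite topological tree is the unique smallest subtree containing all its endpoints, we conclude $U = \Hub(\nu)$. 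In particular, the forward orbit of the critical path in \eqref{Item:Hubcrit} stabilizes at index $\kappa(\nu)-1$.

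For the entropy identity, the inequality $h(\nu) \le h_H(\nu)$ is clear from $N_\nu(n) \le N_H(n)$. For the reverse inequality I use the decomposition just obtained: any precritical point $x$ of depth $d$ on $\Hub(\nu)$ lies in some $\sigma^k[\star\nu,\nu]$ with $0\le k\le \kappa(\nu)-1$, so $x = \sigma^k y$ for some $y\in[\star\nu,\nu]$. Then $\sigma^{d+k}y = \sigma^d x = \nu$, so $y$ is a precritical point on the critical path of depth at most $d+k \le d+\kappa(\nu)-1$. This gives
\[
N_H(d) \;\le\; \sum_{k=0}^{\kappa(\nu)-1} \sum_{j=0}^{d+k} N_\nu(j) \;\le\; \kappa(\nu)\,(d+\kappa(\nu)) \max_{0\le j\le d+\kappa(\nu)-1} N_\nu(j).
\]
Since the right-hand side is at most a polynomial-in-$d$ multiple of $\max_{j\le d+\kappa(\nu)-1} N_\nu(j)$, taking $\frac{1}{d}\log$ and passing to the $\limsup$ absorbs both the polynomial prefactor and the additive constant $\kappa(\nu)-1$ in the exponent, yielding $h_H(\nu) \le h(\nu)$.

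The only subtle step is the tree-topological fact that a finite tree equals the minimal subtree containing its endpoints, which is what promotes the inclusion $\{\nu,\sigma\nu,\ldots,\sigma^{\kappa(\nu)-1}\nu\}\subseteq U$ to the equality $U = \Hub(\nu)$; this is standard and available from the framework of \cite{MalteDierkTrees}. Beyond this, the bound on $N_H(d)$ is elementary bookkeeping, so I do not anticipate any significant obstacle.
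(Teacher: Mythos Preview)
Your proof is correct and follows essentially the same approach as the paper: both use that the finite union contains all endpoints and hence equals the finite tree, and both bound $N_H$ by pulling precritical points back along $\sigma^k$ to the critical path. The paper obtains the sharper inequality $N_H(n)\le \sum_{i=0}^{\kappa-1} N_\nu(n+i)$ (since the preimage $y$ of a depth-$d$ point under $\sigma^k$ has depth exactly $d+k$, not merely at most $d+k$), and this tighter bound is reused later in Lemma~\ref{Lem:UpperBoundEnya}; your looser double sum still suffices for the entropy conclusion here.
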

\begin{proof}
The finite union contains the paths $[\nu, \sigma\nu], [\sigma(\nu), \sigma^2(\nu)],\ldots  ,\sigma^{\kappa-1}(\nu), \sigma^\kappa(\nu)]$ connecting all endpoints with each other. Hence, they cover the entire Hubbard tree. Moreover, we conclude that every precritical point on the Hubbard tree of depth $n$ is  the image of a precritical point on the critical path of depth at most $n+\kappa-1$. Hence,
\begin{equation}
N_H(n)\le \sum_{i=0}^{\kappa-1} N(n+i).
\label{Eq:NumberEnyaTree}
\end{equation}
By definition of core entropy, we have for all $\eps>0$ and sufficiently large $n$ the bound $N(n)\le e^{(h(\nu)+\eps)n}$. The inequality above then yields $h_H(\nu)\le h(\nu)+\eps$ and hence $h_H(\nu)=h(\nu)$. 
\end{proof}

\begin{lemma}[Monotonicity of $\kappa$]
\label{Lem:KappaMonotone}
If $\mu \prec \nu$, then $\kappa(\mu)\le\kappa(\nu)$.
\end{lemma}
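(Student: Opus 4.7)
The plan is to compare the endpoints of $\Hub(\mu)$ with those of the subtree $\Hub_{\tilde\mu}(\nu)\subseteq\Hub(\nu)$ spanned by the orbit of the characteristic point $\tilde\mu$, and then to inject endpoints of the subtree into endpoints of the full tree. If $\kappa(\nu)=\infty$ there is nothing to show, so I may assume $\kappa(\nu)<\infty$; by Theorem~\ref{Thm:Hubbard_tree}~\eqref{Item:HubEndpoints} the tree $\Hub(\nu)$ is then compact, and consequently its closed subtree $\Hub_{\tilde\mu}(\nu)$ is also a finite tree.

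For the first comparison I would use the orbit-level bijection $\sigma^k\mu\leftrightarrow\sigma^k\tilde\mu$. Theorem~\ref{Thm:InclusionTrees}~\eqref{item:branch} (with item~\eqref{lomu} handling $\lo\mu$-preimages when $\mu$ is a bifurcation) tells us that every branch point of $\Hub(\mu)$ reappears in $\Hub(\nu)$ with the same itinerary, the same number of arms, and the same relative ordering of these arms. Since the branch-point data together with the arm assignments completely determine the combinatorial tree structure on the spanning orbit, for any triple $\sigma^{k_1}\mu,\sigma^{k_2}\mu,\sigma^{k_3}\mu$ the information of which one separates the other two (or at which branch point the three paths meet) is preserved under the bijection. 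An orbit point is an endpoint of its spanned subtree iff no two other orbit points separate it, so the bijection sends endpoints of $\Hub(\mu)$ bijectively to endpoints of $\Hub_{\tilde\mu}(\nu)$; in particular $\Hub_{\tilde\mu}(\nu)$ has exactly $\kappa(\mu)$ endpoints.

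Finally I would invoke a general topological principle: for any subtree $T'$ of a finite tree $T$, the endpoints of $T'$ inject into the endpoints of $T$. To each endpoint $e$ of $T'$ assign $e$ itself if $e$ is already an endpoint of $T$; otherwise $T\setminus\{e\}$ is disconnected and, since $T'\setminus\{e\}$ is connected, it lies in a single component of $T\setminus\{e\}$, leaving an external component $C_e$ disjoint from $T'\setminus\{e\}$ in which one picks any endpoint of $T$. For distinct $e_1\ne e_2$ the external components $C_{e_1},C_{e_2}$ are disjoint: otherwise the unique path from a common point to $e_1$ would have to leave the component $C_{e_2}$ through $e_2$, forcing $e_2\in C_{e_1}$, in contradiction with the disjointness of $C_{e_1}$ and $T'\setminus\{e_1\}$. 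Applying this principle to $T'=\Hub_{\tilde\mu}(\nu)\subseteq T=\Hub(\nu)$ yields $\kappa(\mu)\le\kappa(\nu)$.

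The main obstacle is ensuring the combinatorial identification in the second paragraph is watertight in the $\star$-periodic bifurcation case, where the Fatou intervals of $\Hub(\mu)$ are ``inflated'' inside $\Hub(\nu)$ into genuine subarcs that also carry the orbit of $\lo\mu$. Forming $\Hub_{\tilde\mu}(\nu)$ from the orbit of $\up\mu=\tilde\mu$ alone (as in Definition~\ref{Def:OrderKneadings}) keeps the two tree skeleta combinatorially isomorphic, so that the endpoint count carries over; verifying this rigorously is the delicate step of the argument.
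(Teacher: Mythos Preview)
Your argument is workable but takes a longer route than the paper's, and the step you yourself flag as delicate is exactly what the paper avoids.

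The paper's proof is a single sentence: by Theorem~\ref{Thm:InclusionTrees}\,\eqref{item:branch} every branch point of $\Hub(\mu)$ reappears in $\Hub(\nu)$ with at least as many arms, and from this one concludes directly that $\Hub(\mu)$ cannot have more endpoints than $\Hub(\nu)$. The implicit principle is the Euler-type identity for finite trees,
\[
\kappa \;=\; 2 \;+\; \sum_{b\ \text{branch point}} \bigl(\deg(b)-2\bigr),
\]
so an injection of branch points that does not decrease local degrees immediately gives $\kappa(\mu)\le\kappa(\nu)$. No passage through the subtree $\Hub_{\tilde\mu}(\nu)$, no orbit-level bijection, and no endpoint-injection lemma are needed.

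By contrast, your second paragraph asserts that the bijection $\sigma^k\mu\leftrightarrow\sigma^k\tilde\mu$ preserves the betweenness relation on orbit points. Theorem~\ref{Thm:InclusionTrees}\,\eqref{item:branch} speaks about branch points of $\Hub(\mu)$ inside $\Hub(\nu)$, not about branch points of the subtree $\Hub_{\tilde\mu}(\nu)$, and it guarantees equality of arm counts only for periodic branch points that are not preimages of $\lo\mu$; turning this into a full combinatorial isomorphism $\Hub(\mu)\cong\Hub_{\tilde\mu}(\nu)$ takes real work (density of precritical points plus items~\eqref{it1}--\eqref{it2}), which you defer. Your step~3 is fine as a general tree fact. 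So your approach is not wrong, but it trades the paper's one-line counting argument for a structural isomorphism that is heavier to nail down, especially in the bifurcation case you single out.
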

\begin{proof}
Since all branch points of $\Hub(\mu)$ are also branch points with at least as many branches in $\Hub(\nu)$ by Theorem~\ref{Thm:InclusionTrees}~\eqref{item:branch}, the Hubbard tree of $\mu$ cannot have more endpoints than $\Hub(\nu)$.
\end{proof}

\begin{remark}
It turns out that if $\kappa(\nu)=\infty$, then $\nu$ is maximal in the sense that there is no $\nu'\succ \nu$: whenever $\nu'\succ\nu$ and both are non-periodic, then it is easy to see that there exists a $\star$-periodic $\mu$ with $\nu'\succ\mu\succ\nu$; then $\kappa(\nu)\le\kappa(\mu)<\infty$. 

\end{remark}

In the following lemma, and elsewhere when there is no danger of confusion, we write for simplicity $h=h(\nu)$, $\kappa=\kappa(\nu)$, and $\lambda=\lambda(\nu)$. 

\begin{lemma}[Bound for uniformly expanding sequences]
\label{Lem:UpperBoundEnya}
If $\nu$ is uniformly expanding with parameter $\lambda$, then $N(n)\le  e^{h (n+\lambda)}$ for all $n$. Moreover, $N_H(n) \le \kappa e^{h(n+\lambda+\kappa)}$.
\end{lemma}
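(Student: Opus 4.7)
The plan is to derive a super-multiplicative estimate for $N(\cdot)$ out of the uniform expansion hypothesis and then feed it into the $\limsup$-definition of $h$ in a Fekete-style argument, converting the asymptotic rate into a uniform pointwise bound. Concretely, the core step is to prove
\[
N(n+m+\lambda) \;\ge\; N(n)\,N(m) \qquad \text{(up to a harmless additive correction)}
\]
for all $n,m\ge 0$. To establish this, I would enumerate the precritical points of depth exactly $n$ on the critical path as $\rho_1,\dots,\rho_{N(n)}$. For each adjacent pair $(\rho_i,\rho_{i+1})$, uniform expansion gives $\sigma^{n+\lambda}([\rho_i,\rho_{i+1}])\supseteq[\star\nu,\nu]$, so every depth-$m$ precritical point $q$ on the critical path pulls back to at least one $p\in[\rho_i,\rho_{i+1}]$ with $\sigma^{n+\lambda}(p)=q$, and hence $\sigma^{n+m+\lambda}(p)=\nu$. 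By injectivity of $\sigma$ on intervals missing $\star\nu$ (Theorem~\ref{Thm:Hubbard_tree}), such $p$ is a precritical point of depth exactly $n+m+\lambda$, apart from finitely many preimages coinciding with preexisting lower-depth precritical points already lying in the open interval. Since adjacent intervals have disjoint interiors, summing over the (essentially) $N(n)$ intervals gives the desired lower bound.

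Iterating with $m=n$ produces $N(kn+(k-1)\lambda) \ge N(n)^k$ modulo subleading corrections, so
\[
h \;=\; \limsup_{N\to\infty}\tfrac1N\log N(N) \;\ge\; \lim_{k\to\infty}\frac{k\log N(n)}{kn+(k-1)\lambda} \;=\; \frac{\log N(n)}{n+\lambda},
\]
which rearranges to $N(n)\le e^{h(n+\lambda)}$; the edge case $h=0$ follows because super-multiplicativity then forces $N(n)\le 1$ directly (any value $N(n_0)\ge 2$ would propagate exponentially and give $h>0$). The second inequality is now immediate from Lemma~\ref{Lem:FiniteTrees}: since $\kappa<\infty$ in our setting, that lemma gives $N_H(n)\le\sum_{i=0}^{\kappa-1}N(n+i)$, and substituting the bound just proven yields
\[
N_H(n) \;\le\; \sum_{i=0}^{\kappa-1} e^{h(n+i+\lambda)} \;\le\; \kappa\, e^{h(n+\kappa-1+\lambda)} \;\le\; \kappa\, e^{h(n+\lambda+\kappa)}.
\]

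The main obstacle is the careful bookkeeping in the super-multiplicativity step: one must distinguish preimages that are genuinely \emph{new} precritical points of depth exactly $n+m+\lambda$ from those that accidentally coincide with preexisting precritical points of lower depth inside the open intervals $(\rho_i,\rho_{i+1})$, and one must verify that no double-counting occurs across different intervals. Both issues are combinatorial: the problematic coincidences can only arise from precritical points of depth below $n+\lambda$, whose total count inside $[\star\nu,\nu]$ is bounded independently of $m$, and the interval interiors are disjoint by construction, so the preimages produced in different intervals are automatically distinct. These corrections are subleading and disappear in the $\limsup$, so the stated uniform bounds follow.
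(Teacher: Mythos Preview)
Your approach is correct and reaches the stated bounds, but it takes a longer path than the paper. The paper observes directly that the $N(n)$ depth-$n$ precritical points cut the critical path into $N(n)+1$ subintervals each of whose $\sigma^{n+\lambda}$-image contains the critical path; this is an $(N(n)+1)$-horseshoe for $\sigma^{n+\lambda}$, and the paper simply cites the standard fact (Llibre--Misiurewicz) that a $k$-horseshoe for $f^m$ forces $h(f)\ge (\log k)/m$, giving $h\ge \log(N(n)+1)/(n+\lambda)$ in one line. Your super-multiplicativity estimate $N(n+m+\lambda)\gtrsim N(n)N(m)$ and the subsequent Fekete-style iteration are exactly the hand-rolled proof of that horseshoe entropy bound in this particular setting, so you are re-deriving the cited lemma rather than invoking it. Both routes rest on the same geometric observation; the paper's is shorter, yours is more self-contained. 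Two minor remarks: your worry about preimages ``coinciding with preexisting lower-depth precritical points'' is largely unnecessary in the non-periodic case, since a point $p$ with $\sigma^{n+\lambda}(p)=q$ precritical of depth $m$ automatically has its unique $\star$ at position $n+\lambda+m$; and if you want the exact constant (not just $N(n)\le 1+e^{h(n+\lambda)}$) you should, like the paper, include the two outermost subintervals bounded by $\star\nu$ and $\nu$, which also cover the critical path after $n+\lambda$ iterations by the same reasoning as in Lemma~\ref{Lem:renormalizable_nonperiodic}. The second inequality via Lemma~\ref{Lem:FiniteTrees} is handled identically in both proofs.
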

The last claim is of course meaningless if $\kappa=\infty$.

In the proof, we need the following concept: a closed interval $I\subset\Hub(\nu)$ is an \emph{$n$-horseshoe} for $\sigma^k$ if $I$ contains $n$ closed sub-intervals such that $\sigma^k$ maps each of them onto a set that contains $I$. It is well known that this implies that the topological entropy of $\sigma$, hence the core entropy of $\nu$, is then at least $(\log n)/k$ \cite[Theorem~A]{horseshoes}.

\begin{proof}
By definition, the interior of the critical path contains $N(n)$ precritical points of depth exactly $n$. These $N(n)$ precritical points divide the critical path into exactly $N(n)+1$ different intervals. By definition of uniform expansivity, each of these is sent by $\sigma^{n+\lambda}$ to an image that contains the critical path. This means that the critical path itself is an $N(n)+1$-horseshoe for $\sigma^{n+\lambda}$, which implies
\[
h=h(\nu) \ge \frac{\log(N(n)+1)}{n+\lambda} \qquad \text{and thus} \qquad
 N(n) < e^{h(n+\lambda)}
\;.
\]
Using \eqref{Eq:NumberEnyaTree} in Lemma~\ref{Lem:FiniteTrees}, we then have
\[
N_H(n)\le \sum_{i=0}^{\kappa-1} N(n+i) \le \sum_{i=0}^{\kappa-1} e^{h(n+\lambda+i)} \le  \kappa e^{h(n+\lambda+\kappa)}
\;.
\]
\end{proof}

\goodbreak

We need to introduce one more relevant kind of kneading sequences.

\begin{definition}[Renormalizable kneading sequence]
\label{Def:Renormalizable}
A kneading sequence $\nu$ is called \emph{$p$-renormalizable} (with period $p\ge 2$) if the subtree $K \subset \Hub(\nu)$ spanned by the points $\sigma^{kp}(\nu)$ for $k\ge 0$ has the following properties:
\begin{enumerate}
\item
\label{Item:RenormStrictSubtree}
it contains more than one point, but not all of $\Hub(\nu)$;
\item
\label{Item:RenormInvariant}
it is invariant under $\sigma^p$ in the sense that $\sigma^p(K)\subset K$;
\item 
\label{Item:RenormPeriodic}
finally, if $\nu$ is $\star$-periodic, then its period is greater than $p$. 
\end{enumerate}
In this case, we call $K$ the \emph{little Hubbard tree of $\nu$} (with respect to $p$-renormalization).
\end{definition}

By definition, the subtree $K$ is uniquely determined in terms of $\nu$ and $p$ whenever it exists (because its endpoints are). We will see later that $\star\nu\in \sigma^{p-1}(K)$, so we can change the invariance in \eqref{Item:RenormInvariant} to $\sigma^p(K)=K$.

Note that $K$ cannot contain $\star\nu$ and thus not the entire critical path: if it did, it would contain all sequences $\sigma^{kp}(\star\nu)$ for $k\ge 0$ by $\sigma^p$-invariance, so $\sigma(K)$ would contain all sequences $\sigma^{kp}(\nu)$, thus $\sigma(K)\supseteq K$; this implies by induction $\sigma^p(K)\supseteq \sigma(K)\supseteq K\supseteq \sigma^p(K)$, hence $\sigma(K)=K$. But if $K$ contains the critical path and is $\sigma$-invariant, it must contain the entire Hubbard tree by construction, and this is excluded for renormalization.

\begin{lemma}[Renormalizable subtree]
\label{Lem:renorm}
Let $\nu$ be a kneading sequence and suppose that $\Hub(\nu)$ contains a proper connected subtree $K_0$ that contains the critical value and at least one other point, and that satisfies $\sigma^p(K_0)\subset K_0$ for some $p\ge 2$. Then $\nu$ is $p$-renormalizable, except possibly if $\nu$ is $\star$-periodic with period dividing $p$.
\end{lemma}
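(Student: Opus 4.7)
The plan is to verify the three conditions of Definition~\ref{Def:Renormalizable} for the subtree $K := \bigcup_{i,j\ge 0}[\sigma^{ip}(\nu),\sigma^{jp}(\nu)]$, the convex hull in $\Hub(\nu)$ of the forward $\sigma^p$-orbit of $\nu$, using $K_0$ as an ambient $\sigma^p$-invariant container.

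First I would establish $K\subseteq K_0$. Since $K_0$ is a connected subtree containing $\nu$ with $\sigma^p(K_0)\subseteq K_0$, induction gives $\sigma^{kp}(\nu)\in K_0$ for all $k\ge 0$, and connectedness then forces the convex hull of these points to lie inside $K_0$. This immediately gives $K\subsetneq\Hub(\nu)$, one half of condition~(1). The other half---that $K$ contains more than one point---fails exactly when $\sigma^p(\nu)=\nu$, which is precisely the excluded case; otherwise $[\nu,\sigma^p(\nu)]$ is a nontrivial arc inside $K$. Condition~(3) is obtained from the same exclusion together with the observation that if $\nu$ were $\star$-periodic of period $q\le p$ with $q\nmid p$, then the orbit $\{\sigma^{kp}(\nu)\}$ would already hit $q/\gcd(p,q)\ge 2$ of the $q$ endpoints of the finite tree $\Hub(\nu)$, and a short endpoint-count argument shows that any $\sigma^p$-invariant connected container of these endpoints must be all of $\Hub(\nu)$, contradicting the properness of $K_0$.

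The main obstacle is condition~(2), $\sigma^p(K)\subseteq K$. My approach is to show that $K$ contains no precritical point of depth strictly less than $p$. Granting this, on any geodesic $[\sigma^{ip}(\nu),\sigma^{jp}(\nu)]\subseteq K$ the map $\sigma^p$ is either injective (so the image is the geodesic $[\sigma^{(i+1)p}(\nu),\sigma^{(j+1)p}(\nu)]\subseteq K$) or folds only at precritical points of depth exactly $p$; but such a fold point $w\star\nu$ with $|w|=p-1$ satisfies $\sigma^p(w\star\nu)=\nu\in K$, so the image is a union of two geodesics meeting at $\nu$, still contained in $K$. In either case $\sigma^p(K)\subseteq K$.

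To establish the depth claim I would argue by contradiction. If $w\star\nu\in K\subseteq K_0$ with $d:=|w|+1<p$, then $\sigma^p$-invariance of $K_0$ forces $\sigma^{kp-d}(\nu)\in K_0$ for every $k\ge 1$, enlarging the set of $\sigma$-iterates of $\nu$ present in $K_0$ beyond $\{\sigma^{kp}(\nu)\}$. Combined with connectedness of $K_0$ and density of precritical points in $\Hub(\nu)$ (Theorem~\ref{Thm:Hubbard_tree}\eqref{Item:PrecDenseHub}), iterating this observation on further low-depth precritical points found in the newly produced arcs propagates precritical points in $K_0$ down to arbitrarily small depth, ultimately putting the critical point $\star\nu$ itself into $K_0$. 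But then applying the argument of the parenthetical remark following Definition~\ref{Def:Renormalizable} to $K_0$---using that $K_0$ contains the convex hull $K$ of the orbit and thus that $\sigma(K_0)$ contains every $\sigma^{kp}(\nu)$---yields $\sigma(K_0)\supseteq K_0$ and hence $K_0=\Hub(\nu)$, contradicting properness. The hard part will be making this inductive ``propagation of low-depth precritical points'' fully rigorous; it will rely on the combinatorial structure of precritical points on the critical path developed in Section~\ref{Sec:Htrees}, particularly the injectivity and density statements of Theorem~\ref{Thm:Hubbard_tree}.
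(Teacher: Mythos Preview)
Your overall setup matches the paper: define $K$ as the convex hull of $\{\sigma^{kp}(\nu)\}_{k\ge 0}$, observe $K\subseteq K_0$, and verify the three conditions of Definition~\ref{Def:Renormalizable}. The divergence is in condition~\eqref{Item:RenormInvariant}, and there your argument has a real gap. Your ``propagation'' scheme needs each newly acquired arc in $K_0$ to produce a precritical point of \emph{strictly smaller} depth, but density (Theorem~\ref{Thm:Hubbard_tree}\eqref{Item:PrecDenseHub}) gives no control on depths: the precritical points you find on $[\nu,\sigma^{p-d}(\nu)]$ could all have depth $\ge p$, and then the induction stalls. There is also a secondary problem with your endgame: the parenthetical remark after Definition~\ref{Def:Renormalizable} proves $\sigma(K)\supseteq K$ using that $K$ is the convex hull of the $\sigma^{kp}(\nu)$, so that containing the spanning points forces containing $K$. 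For a general $K_0\supsetneq K$ you only get $\sigma(K_0)\supseteq K$, not $\sigma(K_0)\supseteq K_0$, so even if you did manage to place $\star\nu$ in $K_0$, the contradiction would not follow as written.

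The paper closes this gap with a single preliminary reduction that you are missing: before doing anything else, replace $K_0$ (if necessary) by an iterated image of $K_0\cap\sigma^t(K_0)$ so that $K_0\cap\sigma^t(K_0)$ is at most a single point for every $t\in\{1,\dots,p-1\}$. Since $\nu$ is an endpoint of $\Hub(\nu)$, two connected subtrees meeting at $\nu$ must overlap on a nondegenerate arc; hence after the reduction $\nu\notin\sigma^t(K_0)$ for $0<t<p$. But a precritical point of depth $d<p$ in $K_0$ would give $\nu\in\sigma^d(K_0)$---an immediate contradiction. So $K_0$ (and hence $K$) contains no precritical point of depth $<p$, and your final paragraph's fold analysis then goes through cleanly. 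This reduction is the key idea; the rest of your outline is fine once it is in place.
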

\begin{proof}
We may assume that for $t \in \{1,\ldots  ,p-1\}$, the intersection $K_0\cap \sigma^t(K_0)$ is empty or consists of a single point (otherwise iterate this intersection until it contains the critical value and replace  $K_0$ with it; such an iterate exists because the intersection consists of more than one point and thus contains either a Fatou interval or a precritical point). This intersection point, if it exists, cannot be the critical value because the latter is an endpoint of $\Hub(\nu)$.

By $\sigma^p$-invariance, $K_0$ contains all points $\sigma^{kp}(\nu)$ for $k\ge 0$. Every interval of the form $[\sigma^{kp}\nu, \sigma^{(k+1)p}\nu]\subset K_0$ is mapped injectively at least $p-1$ times because otherwise an iterate $\sigma^t(K_0)$ with $t<p$ would contain the critical value. In other words,
\begin{equation*}
\sigma^p([\sigma^{kp}\nu, \sigma^{(k+1)p}\nu])\in \left\{ \left[\sigma^{(k+1)p}\nu, \sigma^{(k+2)p}\nu\right], \left[\nu, \sigma^{(k+1)p}\nu\right] \cup \left[\nu, \sigma^{(k+2)p}\nu\right]\right\}.
\end{equation*}
Let $K$ be the subtree spanned by the points $\sigma^{kp}\nu$ for $k\ge 0$. Then we conclude that $\sigma^p(K)\subset K$. So properties \eqref{Item:RenormStrictSubtree} and \eqref{Item:RenormInvariant} are satisfied,  \eqref{Item:RenormPeriodic} holds by assumption.
\end{proof}

\begin{lemma}[Intersection of little Hubbard tree]
\label{Lem:lht_cap} 
Let $\nu$ be $p$-renormalizable and let $K$ be its little Hubbard tree. For $t \in \{1,\ldots  ,p-1\}$, the intersection $K\cap \sigma^t(K)$ is empty or consists of a single point. 
\end{lemma}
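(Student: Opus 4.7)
The plan is to argue by contradiction, adapting the mechanism from the proof of Lemma~\ref{Lem:renorm} that iterates a $\sigma^p$-invariant subtree until it contains the critical value. First I would suppose that $J := K \cap \sigma^t(K)$ contains more than one point for some $t \in \{1, \ldots, p-1\}$ and observe that $J$ is a non-trivial $\sigma^p$-invariant subtree of $K$: indeed $\sigma^p(\sigma^t(K)) = \sigma^t(\sigma^p(K)) \subset \sigma^t(K)$ combined with $\sigma^p(K)\subset K$ gives $\sigma^p(J) \subset J$, and $J \subset K$ forces $\star\nu \notin J$.

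Next I would import the trick from Lemma~\ref{Lem:renorm}: because $J$ has more than one point it contains either a precritical point or a Fatou interval, so some iterate $\sigma^s(J)$ contains $\nu$. The set $K_0 := \sigma^s(J)$ remains $\sigma^p$-invariant because $\sigma^s$ and $\sigma^p$ commute. Rerunning the reasoning at the end of the proof of Lemma~\ref{Lem:renorm}, any $\sigma^p$-invariant subtree containing $\nu$ must contain the whole $\sigma^p$-orbit of $\nu$, so $K \subset K_0$. On the other hand $J \subset K$ yields $K_0 \subset \sigma^s(K)$, and writing $s = ap + r$ with $0 \le r < p$ and iterating $\sigma^p(K)\subset K$ gives $\sigma^s(K) \subset \sigma^r(K)$, hence $K \subset \sigma^r(K)$. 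In the case $r = 0$ a direct chase through the inclusions forces $\sigma^s(J)=K \subset J$, so $J = K$ and therefore $K \subset \sigma^t(K)$. In every case one lands at $K \subset \sigma^{r'}(K)$ for some $r' \in \{1, \ldots, p-1\}$, and iterating this inclusion while using $(\sigma^p)^{r'}(K) \subset K$ collapses the chain of subtrees to the equality $K = \sigma^{r'}(K)$.

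The final step would be to derive a contradiction from $K = \sigma^{r'}(K)$ via the endpoint structure. After checking that $\sigma^{r'}|_K$ is injective (using that $\star\nu \notin \sigma^j(K)$ for $0 \le j < r'$, with $\star\nu$ appearing only at the last step $j=p-1$), the restriction becomes a self-homeomorphism of $K$ and must send endpoints to endpoints. The critical value $\nu$ is an endpoint of $\Hub(\nu)$ by Theorem~\ref{Thm:Hubbard_tree}\,\eqref{Item:HubEndpoints} and hence of $K$, so $\sigma^{r'}(\nu)$ is an endpoint of $K$, necessarily of the form $\sigma^{kp}(\nu)$ for some $k \ge 0$. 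The identification $\sigma^{r'}(\nu) = \sigma^{kp}(\nu)$ then forces $\nu$ to be periodic with period dividing $|kp - r'|<p$, contradicting either the non-periodicity of $\nu$ or property~\eqref{Item:RenormPeriodic} of Definition~\ref{Def:Renormalizable}, which requires the $\star$-period of $\nu$ to exceed $p$.

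I expect the main obstacle to be the last paragraph: carefully verifying the injectivity of $\sigma^{r'}$ on $K$ (equivalently, tracking the precise $j$ for which $\star\nu \in \sigma^j(K)$) and confirming that the endpoints of $K$ really lie in the spanning set $\{\sigma^{kp}(\nu)\}_{k \ge 0}$ rather than merely being limit points of that set, which may require invoking Theorem~\ref{Thm:Hubbard_tree}\,\eqref{Item:PrecDenseHub} or an implicit finiteness/non-recurrence argument.
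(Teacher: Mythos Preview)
Your route to $K=\sigma^{r'}(K)$ for some $r'\in\{1,\dots,p-1\}$ is essentially the paper's argument; the paper streamlines the case split by observing directly that one of $s$ or $s+t$ is not a multiple of $p$ and taking $r$ to be that one, which lands on $K=\sigma^r(K)$ without the $r=0$ detour.

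The genuine gap is the injectivity of $\sigma^{r'}|_K$ in your final step, and it is not merely unverified---it is false in general. Once $\sigma^{r'}(K)=K$, the set $\sigma^j(K)$ depends only on $j\bmod r'$; since $\nu\in K=\sigma^p(K)$ forces $\star\nu\in\sigma^{p-1}(K)=\sigma^{(p-1)\bmod r'}(K)$, you get $\star\nu\in\sigma^m(K)$ for some $m\in\{0,\dots,r'-1\}$. Either $m=0$, giving $\star\nu\in K$ and an immediate contradiction (this only happens when $r'\mid p-1$), or $m\ge 1$ and $\sigma^{r'}$ genuinely folds $K$. So you cannot conclude that the image $\sigma^{r'}(\nu)$ is an endpoint of $K$. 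The paper sidesteps this by using \emph{surjectivity} instead: pick $\zeta\in K$ with $\sigma^r(\zeta)=\sigma^p\nu$; since the endpoints of $K$ lie among the $\sigma^{kp}\nu$ and $r$ is not a multiple of $p$, such a $\zeta$ cannot be an endpoint (unless $\nu$ has period at most $p$, which is excluded), hence $\zeta$ is interior; but an interior point cannot map to an endpoint of the image tree without folding, so $\zeta$ would have to be precritical, which again forces a postcritical point $\sigma^u\nu$ with $u<p$ to be an endpoint of $K$, a contradiction. This preimage argument also makes your second flagged worry (about endpoints of $K$ being in the spanning set) the only remaining point to justify, and the paper simply uses that $K$ is the connected hull of $\{\sigma^{kp}\nu\}$.
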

\begin{proof}
If the statement does not hold, then $K\cap \sigma^t(K)$ contains an interval (possibly a Fatou interval) and thus a precritical point. Hence, there exists $s\ge 1$ such that $\sigma^s(K\cap \sigma^t(K))$ contains the critical value. Set
\begin{equation}
K_1:=\sigma^s(K\cap \sigma^t(K))\subset\sigma^s(K)\cap \sigma^{t+s}(K) 
\;.
\label{Eq:K_1}
\end{equation}
Since $K_1$ is also $p$-invariant, we have $K\subset K_1$. Let $r\in \{s,s+t\}$ be an integer that is not a multiple of $p$ (this is possible because $t$ is not divisible by $p$). Iterating then yields $K \subset K_1 \subset \sigma^r(K) \subset \sigma^{rp}(K) \subset  K$: the first inclusion was just stated, the second is in \eqref{Eq:K_1}, the third follows from $K\subset \sigma^r(K)$ by iteration, and the last follows from $\sigma^p(K)\subset K$. Together, this implies $K=\sigma^r(K)$. We claim that there is no point $\zeta \in K$ such that $\sigma^r(K)=\sigma^p \nu$, leading to a contradiction. First note that $\sigma^p\nu$ is an endpoint of the subtree $K$ (but not necessarily of the entire tree) and all such endpoints are of the form $\sigma^{kp}\nu$. So since $r$ is not a multiple of $p$, the point $\zeta$ is an interior point of $K$. Thus, a neighborhood of $\zeta$ can not map injectively to $\sigma^p \nu$. If $\zeta$ was a precritical point, then invariance under $\sigma^p$ would imply that a postcritical point $\sigma^{u}\nu$ with $u<p$ is an endpoint of $K$, which leads to a contradiction too.
\end{proof}

Non-recurrence and tree-finiteness are independent properties: both finite and infinite trees can be recurrent or not (see \cite[Section 4.2]{MalteDierkTrees} for examples).  The same does not hold for $\lambda$-finiteness as the following two lemmas point out (where we treat the non-periodic and periodic case separately).

\begin{lemma}[Non-periodic uniformly expanding sequences]
\label{Lem:renormalizable_nonperiodic} 
Every kneading sequence that is non-recurrent, non-periodic and non-renormalizable is uniformaly expanding. 
\end{lemma}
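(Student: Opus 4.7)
My plan is to prove uniform expansion by analyzing the image of an interval between two precritical points of equal depth and showing that after a bounded number of iterations beyond the depth, the image already covers the entire critical path.

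First I set up the structural picture. Let $\rho\ne\rho'$ be precritical points on the critical path of equal depth $n$. By the recursive binary-tree construction of precritical points on $[\star\nu,\nu]$, the open interval $(\rho,\rho')$ contains a unique precritical point $\tau$ of least depth $m$, and necessarily $m<n$ (their least common ancestor in the binary tree of precritical points lies strictly between them and has strictly smaller depth). Hence $\sigma^{m-1}$ is injective on $[\rho,\rho']$ and sends it to a non-degenerate arc through $\star\nu$; applying $\sigma$ once more folds at $\star\nu$ and produces a subtree containing $\nu$. After $n-1$ iterations in total, the image $T:=\sigma^{n-1}([\rho,\rho'])$ is a connected subtree of $\Hub(\nu)$ containing $\star\nu$ together with iterates of $\rho,\rho'$ clustered near $\star\nu$.

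Next I aim for a uniform spreading statement: there exists $\lambda=\lambda(\nu)\in\N$, depending only on $\nu$, such that $\sigma^{\lambda+1}(T)$ contains the critical path $[\star\nu,\nu]$ for every subtree $T$ arising as above. Non-recurrence in the form $\diff(\nu,\sigma^k\nu)\le D$ for a uniform $D$ prevents the postcritical orbit from accumulating on $\nu$, which forces $T$ to have a uniform minimum ``extent'' near $\star\nu$ and this extent to grow under subsequent iteration. Non-renormalizability, via Lemma~\ref{Lem:renorm}, precludes the forward orbit $\bigcup_{k\ge 0}\sigma^k(T)$ from being trapped inside any proper $\sigma^p$-invariant subtree, so this orbit must eventually cover $[\star\nu,\nu]$. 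Combining the two gives a uniform $\lambda$; then $\sigma^{n+\lambda}([\rho,\rho'])=\sigma^{\lambda+1}(T)\supseteq[\star\nu,\nu]$ for all admissible pairs, which is exactly uniform expansion with parameter $\lambda$.

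The main obstacle is extracting a single uniform $\lambda$ from the qualitative hypotheses. Non-recurrence only bounds how closely the postcritical orbit approaches $\nu$, and non-renormalizability only rules out invariant proper subtrees; neither alone provides a quantitative escape time. I expect the uniform bound to come from a contradiction-and-limit argument: if no uniform $\lambda$ worked, one would select a sequence of counterexample pairs $(\rho_i,\rho_i')$ with escape times tending to infinity, and apply the Weak Branch Theorem~\ref{Thm:branch_weak} together with the Inclusion Theorem~\ref{Thm:InclusionTrees} to pass to a limiting $\star$-periodic $\mu\prec\nu$ whose presence would force either $p$-renormalizability of $\nu$ or an arbitrarily close approach of the postcritical orbit of $\nu$ to itself, either of which contradicts the standing hypotheses.
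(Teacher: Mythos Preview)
Your skeleton is sound up to the point where you show that $\sigma^{n}([\rho,\rho'])$ contains an arc of the form $[\nu,\nu^{t}]$ for some $t>0$. The gap is in how you extract a \emph{uniform} $\lambda$. The contradiction-and-limit argument you sketch does not work as stated: the Weak Branch Theorem and the Inclusion Theorem are parameter-space tools comparing \emph{different} kneading sequences $\nu,\nu'$, not different pairs of precritical points inside a fixed $\Hub(\nu)$; there is no obvious way to attach a kneading sequence $\mu\prec\nu$ to a counterexample pair $(\rho_i,\rho_i')$ so that failure of covering translates into the combinatorial hypotheses of those theorems. Moreover, $\Hub(\nu)$ need not be compact, so a limit of subtrees $T_i$ need not exist in any useful sense.

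The paper avoids this entirely by a direct, constructive reduction to a single fixed interval. Non-recurrence gives a precritical point $q$ on $[\star\nu,\nu]$ with $q\in[\nu,\nu^{i}]$ for \emph{every} $i>0$; thus $[\nu,q]$ is contained in every $[\nu,\nu^{t}]$ and hence in every $\sigma^{n}([\rho,\rho'])$. Uniformity then reduces to the single question of whether some iterate of the fixed interval $[\nu,q]$ covers the critical path. Taking $q$ of minimal depth $k$, one has $\sigma^{k}([\nu,q])\supset[\nu,q]$, so the sets $K_{m}:=\sigma^{km}([\nu,q])$ are nested and increasing; if they never reach $\star\nu$, their closure is a proper $\sigma^{k}$-invariant subtree containing $\nu$, and Lemma~\ref{Lem:renorm} forces $\nu$ to be renormalizable. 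This is where non-renormalizability is actually used, and it yields a specific $\lambda$ rather than a limiting contradiction.
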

\begin{proof}
Since $\nu$ is non-recurrent and precritical points are dense, and since $\nu$ is an endpoint of $\Hub(\nu)$, there exists a precritical point $q\neq\nu$ such that $q\in[\nu,\nu^i]$ for every $i>0$; choose $q$ so that it has minimal depth, say $k$. We claim that there is a a number $\lambda$ such that $\sigma^\lambda([\nu,q])\supset [\star\nu,\nu]$.

The map $\sigma^k$ sends $[\nu,q]$ homeomorphically to $[\nu^k,\nu]\supset [q,\nu]$. 
Thus the sets $K_n=\sigma^{kn}([\nu,q])$ are monotonically increasing. If  $\star\nu\in K_n$ for some $n$, then the claim is proved.

The other case to consider is that $\star\nu\not \in K_n$ for all $n$. 
Let $K$ be the closure of $\bigcup_{n\ge 0}K_n$. This set is invariant under $\sigma^k$ by construction, it contains $\nu$ but not only $\nu$. This implies that either $\star\nu\not\in K$, or $\star\nu$ is an endpoint of $K$. But $\star\nu$ is never an endpoint of $\Hub(\nu)$ when $\nu$ contains an entry $\0$, which is always the case when $\nu$ is non-periodic. By Lemma~\ref{Lem:renorm} it follows that $\nu$ is renormalizable.

Now we show that $\nu$ is uniformly expanding with parameter $\lambda$. 
Let $[\rho,\rho']\subset\Hub(\nu)$ be an interval that is bounded by two precritical points of equal depth $n$. 
There must be a precritical point of depth less than $n$ on $[\rho,\rho']$.
Thus there is a postcritical point $\nu^t\neq\nu$ such that $\sigma^n([\rho,\rho'])\supset[\nu,\nu^t]\supset[\nu,q]$, hence $\sigma^{n+\lambda}([\rho,\rho'])\supset\sigma^\lambda([\nu,q])\supset [\star\nu,\nu]$. This proves the lemma.
\end{proof}

\begin{lemma}[Periodic uniformly expanding sequences]
\label{Lem:renormalizable_periodic} 
Every $\star$-periodic kneading sequence $\nu$ is uniformly expanding unless it is $q$-renormalizable such that $q$ is a proper divisor of the period of $\nu$. Stronger yet, there is a $\lambda'\in\N$ such that for every $\nu^k$ the image of $[\nu,\nu^k]$ after $\lambda'$ iterations contains the entire Hubbard tree. 
\end{lemma}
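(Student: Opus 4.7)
The plan is to establish the stronger statement first, from which uniform expansion follows readily. Since $\nu$ is $\star$-periodic with period $P$, the Hubbard tree $\Hub(\nu)$ is finite by Theorem~\ref{Thm:Hubbard_tree} (the postcritical orbit contains only $P$ points). Fix $k \in \{1, \ldots, P-1\}$. Because both $\nu$ and $\nu^k$ are $P$-periodic, $\sigma^P$ fixes the endpoints of $[\nu, \nu^k]$, so $\sigma^P([\nu, \nu^k])$ is a connected subtree containing both endpoints and therefore contains $[\nu, \nu^k]$ itself. Iterating, the sets $K_n^{(k)} := \sigma^{nP}([\nu, \nu^k])$ form a nested increasing sequence of subtrees of $\Hub(\nu)$. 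Because the finite tree has only finitely many subtrees whose endpoints lie among the finitely many postcritical and branch points, this sequence stabilizes at some $L_k := K_{n_k}^{(k)}$ satisfying $\sigma^P(L_k) = L_k$.

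The key step is to show $L_k = \Hub(\nu)$. Suppose for contradiction that $L_k \subsetneq \Hub(\nu)$. Consider the $\sigma$-orbit $L_k, \sigma(L_k), \ldots, \sigma^{P-1}(L_k)$ of $\sigma^P$-invariant subtrees, and let $q$ be the smallest positive integer with $\sigma^q(L_k) = L_k$; then $q$ divides $P$. If $q < P$, then $L_k$ is a proper $\sigma^q$-invariant connected subtree containing the critical value $\nu$, so Lemma~\ref{Lem:renorm} yields that $\nu$ is $q$-renormalizable (the exception there requires the period of $\nu$ to divide $q$, which fails since $q < P$), contradicting our hypothesis. The remaining case $q = P$ is the principal technical obstacle; to deal with it I would adapt the argument of Lemma~\ref{Lem:renormalizable_nonperiodic}: choose $k = m$ where $\nu^m$ is the postcritical point closest to the endpoint $\nu$ along the unique initial edge of the tree from $\nu$, so that $\nu^m \in [\nu, \nu^i]$ for every $i \not\equiv 0 \pmod P$; then iterate $[\nu, \nu^m]$ under $\sigma^{P-m}$, which fixes $\nu$ and sends $\nu^m$ to $\nu$, producing another nested increasing sequence whose stabilized limit either contains $\star\nu$ (whence it contains the critical path and, by further $\sigma$-iteration using Theorem~\ref{Thm:Hubbard_tree}, the whole tree) or gives a proper $\sigma^{P-m}$-invariant subtree to which Lemma~\ref{Lem:renorm} applies, again yielding the contradiction that $\nu$ is renormalizable with proper divisor period.

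Once $L_k = \Hub(\nu)$ has been established for every $k$, setting $\lambda' := P \cdot \max_k n_k$ gives the stronger statement $\sigma^{\lambda'}([\nu, \nu^k]) \supseteq \Hub(\nu)$ for every $k$. Uniform expansion then follows: for precritical points $\rho, \rho'$ of equal depth $n$ on the critical path, the image $\sigma^n([\rho, \rho'])$ is a connected subtree containing $\nu = \sigma^n(\rho) = \sigma^n(\rho')$, and hence contains $[\nu, \nu^k]$ for some $k$; applying $\sigma^{\lambda'}$ then yields $\sigma^{n+\lambda'}([\rho, \rho']) \supseteq \sigma^{\lambda'}([\nu, \nu^k]) \supseteq \Hub(\nu) \supseteq [\star\nu, \nu]$, with parameter $\lambda'$. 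The hard part will be the $q = P$ case, where one must use the specific geometry of the postcritical orbit on the finite tree and the careful choice of $m$ to rule out a cyclic orbit of $L_k$ under $\sigma$ with trivial intersection structure, forcing either a renormalization with proper divisor period or the capture of $\star\nu$ by the iteration.
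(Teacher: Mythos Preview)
Your overall architecture---stabilize $\sigma^{nP}([\nu,\nu^k])$ to a $\sigma^P$-invariant subtree $L_k$, use Lemma~\ref{Lem:renorm} when the minimal return period $q$ is a proper divisor of $P$, and derive uniform expansion from the stronger covering statement---matches the paper's proof. The derivation of uniform expansion at the end is also essentially the paper's (though you should note that $\sigma^n([\rho,\rho'])$ contains some $[\nu,\nu^k]$ because there is a precritical point of depth $<n$ between $\rho$ and $\rho'$, not merely because both endpoints map to $\nu$).

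The genuine gap is your treatment of the case $q=P$. Two concrete problems:
\begin{itemize}
\item The map $\sigma^{P-m}$ does \emph{not} fix $\nu$; it sends $\nu$ to $\nu^{P-m}$ and $\nu^m$ to $\nu$. So the iterates $\sigma^{n(P-m)}([\nu,\nu^m])$ are not nested in the way you claim, and there is no stabilization argument available along these lines.
\item Even if you obtained a proper $\sigma^{P-m}$-invariant subtree, Lemma~\ref{Lem:renorm} would only give $(P-m)$-renormalizability. Since $P-m$ is in general not a divisor of $P$, this does not contradict the hypothesis of the lemma, which only excludes renormalization periods that are proper divisors of $P$.
\end{itemize}

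The paper closes this case by a completely different argument. After refining $L_k$ (passing to iterated intersections if necessary) to a set $K_0$ with $\sigma^P(K_0)=K_0$ whose distinct $\sigma$-iterates meet pairwise in at most one point, one observes that each of the $P$ sets $K_0,\sigma(K_0),\ldots,\sigma^{P-1}(K_0)$ contains at least two of the $P$ postcritical points. A counting argument on the finite tree (essentially: $P$ subtrees, $P$ marked points, pairwise edge-disjoint) forces two of these iterates to coincide, say $\sigma^s(K_0)=K_0$ with $0<s<P$. Then $\sigma^{\gcd(s,P)}(K_0)=K_0$, and $\gcd(s,P)$ \emph{is} a proper divisor of $P$, so Lemma~\ref{Lem:renorm} now yields the desired contradiction. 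This pigeonhole step is the missing idea in your plan.
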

\begin{proof}
The proof is similar to the one before. Let $p$ be the period of $\nu$. If for all postcritical points $\nu^k$ with $k\in\{1,..,p-1\}$, there exists $i_k$ such that $\sigma^{i_k}([\nu,\nu^k])=\Hub(\nu)$, then we can take $\lambda(\nu)=\max_k i_k$: if $[\rho,\rho']$ is an interval bounded by two precritical points of equal depth $n$, then $\sigma^n([\rho,\rho'])\supset[\nu,\nu^k]$ for some $k>0$, and 
\[
\sigma^{n+\lambda}([\rho,\rho'])\supset \sigma^{\lambda-i_k}(\sigma^{i_k}([\nu,\nu^k]))\supset \sigma^{\lambda-i_k}(\Hub(\nu))=\Hub(\nu)
\;.
\]

The other case is that there exists $k\in\{1,..,p-1\}$ such that no iterate of $[\nu,\nu^k]$ ever covers the Hubbard tree. 
Since $\sigma^p([\nu,\nu^k])\supset[\nu,\nu^k]$, the sets $K_t:=\sigma^{tp}([\nu,\nu^k])$ are increasing and bounded by postcritical points, of which there are only finitely many, so there is a $K_T$ such that $K_{T+1}=K_T$, hence $\sigma^p(K_T)=K_T$. 

If any two of the $p-1$ sets $K_T,\sigma(K_T),\ldots  ,\sigma^{p-1}K_T$ intersect in more than a point, we look instead at the iterated images of the intersection, as argued in previous proofs. Thus, we find a set $K_0$ that is bounded by postcritical points and contains $\nu$ and at least another point, and so that $\sigma^p(K_0)=K_0)$ and $K_0$ intersects any of its $p-1$ iterates in at most one point.

Our next claim is that two of the sets $K_0,\sigma(K_0),\ldots  ,\sigma^{p-1}(K_0)$ are identical. This follows from the tree structure of $\Hub(\nu)$ and the fact that each of these $p-1$ sets contains at least two postsingular points, while the pairwise intersection contains at most one point.

Therefore there is an $s\in\{1,\ldots,p-1\}$ such that $\sigma^s(K_0)=K_0=\sigma^p(K_0)$. This implies that $\sigma^q(K_0)=K_0$ for $q:=\gcd(s,p)<s$. By Lemma~\ref{Lem:renorm}, we conclude that $\nu$ is $q$-renormalizable. 
\end{proof}

The previous two lemmas thus provide an upper bound on $N(n)$ by Lemma~\ref{Lem:UpperBoundEnya}. The next lemma shows the existence of a lower bound.

\begin{lemma}[Lower bound for periodic sequences]
\label{Lem:LowerBoundEnya}
Let $\nu$ be a non-renormalizable, $\star$-periodic kneading sequence. Then there exists $c>0$ such that $N(n)\ge c e^{h(\nu)n}$ for all $n$.
\end{lemma}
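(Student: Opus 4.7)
Since $\nu$ is $\star$-periodic, the Hubbard tree $\Hub(\nu)$ is finite by Theorem \ref{Thm:Hubbard_tree} and $h=h_H$ by Lemma \ref{Lem:FiniteTrees}. Non-renormalizability combined with Lemma \ref{Lem:renormalizable_periodic} gives uniform expansion with some parameter $\lambda$, so that $\sigma^{n+\lambda}(J)\supset[\star\nu,\nu]$ for any interval $J$ bounded by two precritical points of equal depth $n$.

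The strategy is to derive a supermultiplicative lower bound on the counting function from uniform expansion and to match it against the upper bound $N(n)\le e^{h(n+\lambda)}$ of Lemma \ref{Lem:UpperBoundEnya}. The $N(n)$ precritical points of depth exactly $n$ on the critical path $P=[\star\nu,\nu]$, together with the endpoints of $P$, partition $P$ into $N(n)+1$ subintervals, of which $N(n)-1$ are \emph{inner}: bounded on both sides by precritical points of depth exactly $n$. Uniform expansion implies $\sigma^{n+\lambda}(J_i)\supset P$ for each inner subinterval $J_i$, so $J_i$ contains at least one preimage under $\sigma^{n+\lambda}$ of every precritical point of $P$ of depth at most $m$, each such preimage being itself a precritical point of depth at most $n+\lambda+m$. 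Summing over the $N(n)-1$ inner subintervals yields
\[
\tilde N(n+\lambda+m)\ge (N(n)-1)\cdot \tilde N(m),\qquad \tilde N(k):=\textstyle\sum_{j=1}^{k}N(j).
\]

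Iterating this inequality starting from some $n_0$ with $N(n_0)\ge e^{(h-\eps)n_0}$ (which exists for every $\eps>0$ since $h=\limsup\tfrac{1}{n}\log N(n)$), the accumulated multiplicative factor together with the upper bound forces $\tfrac{1}{n}\log\tilde N(n)\to h$, and thus $\tilde N(n)\ge c_\eps\,e^{(h-\eps)n}$ for every $\eps>0$ and all sufficiently large $n$. To upgrade this to the uniform estimate $N(n)\ge c\,e^{hn}$ for all $n$, one passes to the transition matrix $A$ of $\sigma$ on a Markov partition of the finite tree $\Hub(\nu)$ by postcritical and branch points: $A$ has spectral radius $e^h$, and by Lemma \ref{Lem:renorm} non-renormalizability forbids a proper connected subtree containing the critical value from being $\sigma^q$-invariant for any $q$ that properly divides the period of $\nu$. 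The resulting irreducibility of the dominant strongly connected component of $A$, together with the Perron--Frobenius theorem, delivers the desired uniform lower bound on $N(n)$.

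The main obstacle is exactly this last passage from an asymptotic growth rate to a uniform constant: a priori $N(n)$ could concentrate along certain residue classes modulo the period of $\nu$, producing the correct overall exponent $h$ but dipping below $c\,e^{hn}$ for individual values of $n$. Ruling this out requires non-renormalizability in its full strength via the transition matrix, after which the constant $c$ must be further adjusted to absorb the finitely many small values of $n$.
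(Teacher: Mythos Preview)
Your proposal has a genuine gap, and the supermultiplicative detour in the first half does not help close it.

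The paper's proof is entirely the transition-matrix argument you sketch in your second half, but carried out completely. The key step you are missing is \emph{primitivity} of $A$, not merely irreducibility of a dominant component. Irreducibility alone still allows eigenvalues of modulus $e^{h}$ other than $e^{h}$ itself (roots of unity times the spectral radius), and then $(A^n)_{kk}$ can genuinely oscillate --- exactly the obstacle you name but do not remove. Your attempt to rule this out via Lemma~\ref{Lem:renorm} only excludes $\sigma^q$-invariant subtrees for $q$ properly dividing the period of $\nu$; but the imprimitivity index of $A$ has no a priori reason to divide the period of $\nu$, so this does not establish aperiodicity.

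The paper closes this gap by invoking the \emph{stronger} clause of Lemma~\ref{Lem:renormalizable_periodic}: there is a single $t$ such that $\sigma^t$ of \emph{every} edge covers the whole tree, i.e.\ $A^t$ has all entries positive. This gives primitivity directly, so Perron--Frobenius yields $|\lambda_i|<\lambda_1$ for $i\ge 2$, and a Jordan-form estimate then gives $(A^n)_{kk}\ge c_1\lambda_1^n + O(n^M|\lambda_2|^n)$ with $c_1>0$. Taking $e_k$ to be the edge of the critical path adjacent to $\star\nu$ gives $N(n+1)\ge (A^n)_{kk}\ge c\,e^{hn}$.

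Your supermultiplicative inequality is correct but superfluous: it only recovers the growth rate $h$, which you already know, and you abandon it before the hard step anyway. The whole argument should go straight to the matrix, use the strong form of Lemma~\ref{Lem:renormalizable_periodic} to get primitivity, and finish.
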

\begin{proof}
Divide the Hubbard tree of $\nu$ into $M$ edges $e_1,...,e_M$ formed by the critical point, postcritical points and branching points. The dynamics of the tree can then be described by a transition matrix $A$. As we mentioned previously, the core entropy $\nu$ equals the logarithm of the leading eigenvalue $\lambda_1$ of $A$. Let $\lambda_2,...,\lambda_M$ denote the other eigenvalues, counted with multiplicity. Since $\nu$ is non-renormalizable, Lemma~\ref{Lem:renormalizable_periodic} tells us that that there exists $t \in \N$ such that the $\sigma^t$ image of every of the $M$ edges equals the entire Hubbard tree. In other words, $A^t$ is a positive matrix, so $A$ is primitive. The Perron--Frobenius theorem then tells us that $|\lambda_i|< \lambda_1$ for $i=2,...,M$. By bringing $A$ into Jordan-normal-form we see that for every $j,k\in {1,...,M}$ there exists constants $c_1,...,c_M$ with $c_1>0$ such that for every $n$ 
\begin{equation}
\label{ineq:matrix}
(A^n)_{kj} \ge c_1 \lambda_1^n + c_2 n^M \lambda_2^n + ... c_M n^M \lambda_M^n. 
\end{equation}
Let $e_k$ be the edge on the critical path with endpoint $\star\nu$. Then $(A^n)_{kk}$ counts the number of precritical points of depth $n+1$ on $e_k$, which is less than $N(n+1)$, thus proving the claim.
\end{proof}

\Newpage

\section{H\"older continuity --- the non-renormalizable case}
\label{Sec:hoelder_nr}

Our first lemma on H\"older continuity has no prerequisite on $\nu$. 
\begin{lemma}[H\"older continuity from above]
\label{Lem:estimate_from_above}
Suppose that a kneading sequence $\nu$ with entropy $h=h(\nu)>0$ has a constant $C>0$ so that for all $n$ we have the bound $N_H(n)\le C e^{h n}$. Then for all $\nu' \succ \nu$  the entropy difference satisfies
\[
0\le h(\nu')-h(\nu) \le 2C  e^{-h k}
\]
where $k:=\diff(\nu,\nu')$.
\end{lemma}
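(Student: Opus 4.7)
The inequality $h(\nu) \le h(\nu')$ is immediate from the monotonicity statement in Lemma~\ref{Lem:entropybasics}(3), since $\nu \prec \nu'$ by hypothesis.

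For the upper bound, the plan is to exploit the combinatorial normal form provided by Theorem~\ref{Thm:InclusionTrees}~\eqref{item:combinatorics}: every precritical point of depth $n$ on the critical path of $\nu'$ can be written in one of the forms
\[
w_0\, e_0\, X_{l_0}\, w_1\, e_1\, X_{l_1}\, \cdots\, X_{l_{s-1}}\, w_s\, \star,
\qquad\text{or}\qquad
X_{l_0}\, w_1\, e_1\, \cdots\, X_{l_{s-1}}\, w_s\, \star,
\]
with $s \ge 0$, $e_i \in \{\0,\1\}$, $l_i \ge k-1$, and each $w_i$ a word in $\Hub(\nu)$. By hypothesis, the number of such words of length $m$ equals $N_H(m+1) \le Ce^{h(m+1)}$, which gives effective control on the basic building blocks.

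I would package the count in a generating function. Setting $A(x) := \sum_{m \ge 0} N_H(m+1)\,x^m$, each $w$-block contributes a factor $A(x)$, each filler $e_i X_{l_i}$ with $l_i \ge k-1$ contributes $2x \cdot x^{k-1}/(1-x) = 2x^k/(1-x)$, and the trailing $\star$ contributes $x$. Summing the geometric series in $s$ yields
\[
N_{\nu'}(n) \;\le\; [x^n] P(x),
\qquad
P(x) \;=\; \frac{x\,A(x)\,(1-x+x^{k-1})}{1 - x - 2x^k A(x)},
\]
so that $h(\nu') \le -\log x^{*}$, where $x^{*}$ is the smallest positive zero of the denominator of $P$. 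Invoking the hypothesis in the form $A(x) \le Ce^h/(1-e^hx)$ for $x < e^{-h}$, the defining equation of $x^{*}$ becomes
\[
(1 - x^{*})(1 - e^h x^{*}) \;=\; 2Ce^h (x^{*})^k.
\]
Writing $x^{*} = e^{-h}(1-\delta)$ with $\delta > 0$ small and solving asymptotically identifies $\delta$ as being of order $Ce^{-hk}$; a careful extraction of the sharp constant then yields $h(\nu') - h \le 2Ce^{-hk}$, as claimed.

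\textbf{Main obstacle.} The decomposition of Theorem~\ref{Thm:InclusionTrees}~\eqref{item:combinatorics} is an upper bound on the possible form of a precritical point and may in principle overcount: distinct structural tuples $(s, w_i, l_i, e_i)$ can represent the same point. The main difficulty is to extract the precise constant $2C$ in the exponent, since a naive asymptotic expansion of the pole $x^{*}$ easily produces a looser constant involving factors like $1/(1-e^{-h})$. Suppressing these requires either a finer analysis of the denominator near $x = e^{-h}$ or a more economical direct count that respects the uniqueness of the normal form.
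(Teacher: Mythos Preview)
Your approach shares the paper's starting point---the structural decomposition from Theorem~\ref{Thm:InclusionTrees}~\eqref{item:combinatorics}---but diverges in how the count is organized. Where you set up a generating function $P(x)$ and locate its smallest positive pole, the paper performs a direct combinatorial overcount: for each fixed $s$ it bounds the number of admissible placements of the $X$-blocks crudely by $\binom{n}{s}$, invokes only the global length constraint $\sum_i|w_i|\le n-ks$ to get $\prod_i N_H(|w_i|)\le C^{s+1}e^{h(n-ks)}$ uniformly over all such placements, and then recognizes the resulting sum $\sum_s\binom{n}{s}(2Ce^{-hk})^s$ as $(1+2Ce^{-hk})^n$. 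Applying $\log(1+x)\le x$ to the $\limsup$ yields $h(\nu')-h\le 2Ce^{-hk}$ with no further work.

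Your own diagnosis of the obstacle is accurate and is not merely cosmetic: the pole analysis of $P(x)$ genuinely produces a bound of the form $h(\nu')-h\lesssim \dfrac{2C}{1-e^{-h}}\,e^{-hk}$, because the factor $(1-x)^{-1}$ coming from the free lengths $l_i\ge k-1$ interacts with the pole of $A(x)$ at $e^{-h}$. There is no evident way to suppress this factor while staying inside the generating-function framework. The paper's binomial overcount avoids the issue precisely because it does \emph{not} sum over the $l_i$ separately---it folds all the placement freedom into a single $\binom{n}{s}$, which is exactly what the binomial identity needs. Your route would still establish H\"older continuity from above with the correct exponent $h$ (and this suffices for the downstream theorems, where constants are eventually absorbed into the $\eps$), but it does not deliver the sharp constant $2C$ in the lemma as stated; to match the lemma exactly you should replace the pole analysis by the elementary binomial count.
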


\begin{proof}

We count the precritical points of generation $n$ on the critical path of $\Hub(\nu')$. Consider one such point and let $w$ be its itinerary; this is a finite word of length $n$. Then by Theorem~\ref{Thm:InclusionTrees} \eqref{item:combinatorics}
\begin{equation}
\label{rep1}
w=w_0e_0X_{l_0}w_1e_1X_{l_1}\ldots  X_{l_{s-1}}w_s
\end{equation}
where $s\ge 0$, $l_i\ge k-1$, $e_i\in \{\0,\1\}$,  $X_l$ stands for the first $l$ entries of $\nu$ and $w_i$ is a word from the Hubbard tree of $\mu$ where we allow $w_0e_0$ to be empty. Based on this representation, we are going to give an upper bound on the number of possible words of fixed length $n$. 

There are less than $\binom{n}{s}$ choices for the positions of the words $X_{l_0},...,X_{l_{s-1}}$ in the word $w$. For each choice, the lengths $|w_i|$ of the words $w_0,...,w_s$ and the positions of the $e_i$ are determined. For the $s$ entries $e_0$, \dots, $e_{s-1}$, we have $2^s$ choices. Moreover we have the bounds $N_H(|w_i|)\le Ce^{h|w_i|}$ and $\sum_{i=0}^s |w_i|\le n - ks$. 

These observations allow us to give an upper bound for $N_{\nu'}(n)$:

\begin{align*}
N_{\nu'}(n)
& \le \sum_{s= 0}^n \binom{n}s 2^s  \prod_{i=0}^s N_H(|w_i|)
\le  \sum_{s= 0}^n \binom{n}s 2^s C^{s+1} e^{h(n-ks)} \\
&= C e^{hn}  \sum_{s= 0}^n  \binom{n}s \left( 2C e^{-hk}\right)^s 
= C e^{hn}  \left( 1 + 2Ce^{-hk}\right)^n.
\end{align*}

This yields
\[
\frac 1 n \log N_{\nu'}(n) \le h + \log(1+2Ce^{-h k}) +\log C/n \le h + 2C e^{-h k}+\log C/n
\]
and the claim follows by taking the lim sup.
\end{proof}

The converse statement Theorem~\ref{Thm:InclusionTrees}~\eqref{item:combinatorics_reversed} yields lower bounds on the number of precritical points instead that show that the H\"older exponent is optimal:

\begin{theorem}[Optimal exponent for H\"older continuity]
\label{Thm:larger_expo}
Let $\nu$ be a $\star$-periodic, non-re\-nor\-ma\-li\-zable kneading sequence with positive core entropy.  Then core entropy is \emph{not} locally H\"older continuous with any exponent greater than $h(\nu)$. 
\end{theorem}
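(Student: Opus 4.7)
The strategy is to produce, for each large $k$, a kneading sequence $\nu^{(k)}$ with $\nu \prec \nu^{(k)}$ and $\Diff(\nu,\nu^{(k)}) \le k$ satisfying a matching lower bound
\[
h(\nu^{(k)}) - h(\nu) \;\ge\; c\, e^{-h(\nu) k}
\;,
\]
for a constant $c>0$ independent of $k$. Since the natural ``distance'' coming from the neighborhood basis $N_k(\nu)$ is of order $e^{-k}$, Hölder continuity at $\nu$ with exponent $\alpha>h(\nu)$ would force $|h(\nu^{(k)})-h(\nu)| \le C e^{-\alpha k}$, and combined with the lower bound this yields $c \le C e^{(h-\alpha)k}\to 0$, a contradiction. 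So it suffices to build the sequences $\nu^{(k)}$ and establish the lower bound.

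For the construction, use that $\nu$ is $\star$-periodic, say with internal address $S_1\!\to\!\dots\!\to\!S_n$, and take $\nu^{(k)}$ to be a $\star$-periodic kneading sequence whose internal address extends this by a single entry $S_{n+1}$ with $S_{n+1}=k+1$ (or any comparable choice). By Lemma~\ref{Lem:int_add} we get $\nu \prec \nu^{(k)}$, and by construction $\Diff(\nu,\nu^{(k)})=k$. Now apply Theorem~\ref{Thm:InclusionTrees}\,\eqref{item:combinatorics_reversed} with $\tilde\mu$ the copy of $\nu$ characteristic in $\Hub(\nu^{(k)})$ and $l=\Diff(\tilde\mu,\nu^{(k)})-1=k-1$. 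This gives an injective map from $(s+1)$-tuples of precritical points $w_i\star\nu^{(k)}$ inside $\Hub_{\tilde\mu}(\nu^{(k)})$ (subject to a positional constraint) to precritical points of the form $w_0 e_0 X_l w_1 e_1 X_l \ldots X_l w_s \star\nu^{(k)}$ on the critical path of $\nu^{(k)}$. If the $w_i\star\nu^{(k)}$ have depths $d_i$, the resulting precritical point has depth exactly $\sum_i d_i + s\, l$, and the $s$ symbols $e_i\in\{\0,\1\}$ contribute a factor $2^s$.

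For the count, the Inclusion Theorem identifies precritical points inside $\Hub_{\tilde\mu}(\nu^{(k)})$ with precritical points of $\nu$ of the same depth. Since $\nu$ is non-renormalizable, $\star$-periodic and has positive core entropy, Lemma~\ref{Lem:LowerBoundEnya} gives $N_\nu(m)\ge c_0 e^{h m}$ for all sufficiently large $m$. Convolving this bound over the $(s+1)$ depths with $\sum d_i = n - s(k-1)$ produces
\[
N_{\nu^{(k)}}(n) \;\gtrsim\; \sum_{s\ge 0} 2^s c_0^{s+1}\, e^{h(n-s(k-1))}\, \binom{n-s(k-1)}{s}
\;.
\]
Optimizing over $s$ (the maximum is attained near $s^\ast\sim n\cdot 2c_0 e^{-h(k-1)}$, which is $o(n)$ for $k$ large but grows linearly in $n$ for each fixed $k$), a standard Stirling estimate shows the dominant contribution gives $\tfrac1n \log N_{\nu^{(k)}}(n)\ge h(\nu)+c\, e^{-hk}+o(1)$ as $n\to\infty$, hence $h(\nu^{(k)})\ge h(\nu)+c\,e^{-hk}$, which combined with the paragraph above proves the theorem.

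The main obstacle is the positional side condition in \eqref{item:combinatorics_reversed}: the precritical points $w_i\star\nu^{(k)}$ must lie on $\sigma^{k-1}([\tilde\mu,\nu^{(k)}])\cap[\star\nu^{(k)},\tilde\mu]$ and remain inside $\Hub_{\tilde\mu}(\nu^{(k)})$ before reaching the critical value. One must verify that this restriction does not kill the exponential count obtained from Lemma~\ref{Lem:LowerBoundEnya}, i.e.\ that a positive fraction (or at least an exponentially large number) of the precritical points counted by $N_\nu$ correspond to admissible $w_i$. This is where uniform expansivity of $\nu$ (Lemma~\ref{Lem:renormalizable_periodic}, valid precisely because $\nu$ is $\star$-periodic and non-renormalizable) enters: it guarantees that images of small intervals cover the whole tree after a bounded number of iterates, so up to a fixed combinatorial overhead the restricted count differs from $N_\nu$ only by a multiplicative constant, which is absorbed into~$c_0$.
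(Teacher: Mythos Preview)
Your approach matches the paper's: construct $\nu^{(k)}\succ\nu$ via Lemma~\ref{Lem:int_add}, feed Theorem~\ref{Thm:InclusionTrees}\,\eqref{item:combinatorics_reversed} with the lower bound of Lemma~\ref{Lem:LowerBoundEnya}, and extract $h(\nu^{(k)})\ge h(\nu)+c\,e^{-h(\nu)k}$ from a binomial-type sum. Two points deserve correction.

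First, the factor $2^s$ is not available: in \eqref{item:combinatorics_reversed} the symbols $e_i$ are \emph{determined} by the tuple $(w_0,\dots,w_s)$ (``for certain values $e_0,\dots,e_{s-1}$''), not freely chosen. Drop the $2^s$; the estimate still closes, and the paper's computation does exactly this, arriving at $N_{\nu'}(n)\ge\tfrac{c}{2}\,e^{hn}(1+c\,e^{-hk})^n$ via the binomial theorem rather than Stirling.

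Second, your last paragraph identifies the real issue but resolves it only heuristically. The paper handles the positional constraint concretely: it fixes the edge $I\subset\Hub(\nu)$ on the critical path with endpoint $\star\nu$, observes that every precritical point on $I$ has an analogue in $\sigma^{k-1}([\tilde\mu,\nu^{(k)}])\cap[\star\nu^{(k)},\tilde\mu]$ that stays in $\Hub_{\tilde\mu}(\nu^{(k)})$ (this is the content of the inclusion argument, not uniform expansivity), and then invokes the sharper inequality~\eqref{ineq:matrix} from the \emph{proof} of Lemma~\ref{Lem:LowerBoundEnya} to get $N^I(n)\ge c\,e^{hn}$ on that single edge. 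This removes the need for your expansivity patch.
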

We should point out that requiring positive entropy is obsolete by Lemma~\ref{Lem:periodic_bound}. 
\begin{proof}

By Lemma~\ref{Lem:int_add}, there exists infinitely many sequences $\nu' \succ \nu$. For each of them, set $k:=\Diff(\nu', \nu)=\diff(\nu',\up\nu)$. 
Divide $\Hub(\nu)$ into edges formed by the critical value, postcritical points and all branching points and let $I$ be the edge on the critical value with endpoint $\star\nu$. Note that for every precritical point on $I$ there exists an analogue in $\sigma^{k-1}([\nu,\nu'])\cap [\star\nu', \nu]$ which does not leave $\Hub_\nu(\nu')$ before reaching the critical point by Theorem~\ref{Thm:InclusionTrees}~\eqref{it3}. By Theorem~\ref{Thm:InclusionTrees}~\eqref{item:combinatorics_reversed}, for every choice of $s\ge 0$ we have an injective map from $(s+1)$-tuples of precritical points $w_i\star\mu \in e$ to precritical points on the critical path of $\nu'$, which have the form
\[w_0e_0X_{l}w_1e_1X_{l}\ldots  X_{l}w_s\star\nu.\]
for some suitable $e_0, ..., e_{s-1} \in \{\0,\1\}$ and $l=k-1$. We now want to count those precritical points. For a word of length $n$ of the above form we claim that there are at least $\binom{n}{s}/2$ ways to choose the positions of the $s$ subwords $X_l$ when $n$ is large: There are $\binom{n}{s}$ ways to choose $s$ positions in a word of lenght $n$, however they need to be at least $s+1$ positions apart from each other. As $s$ is fixed and $n$ grows, this scenario becomes likely, showing the claim. Let $N^I(n)$ count the number of precritical points of depth $n$ on $I$. By Lemma~\ref{Lem:LowerBoundEnya}, or more precisely inequality $\eqref{ineq:matrix}$, we obtain $c>0$ such that $N^I(n)\ge c e^{h(\nu)n}$ for all $n$. A calculation like in the previous lemma yields:

\begin{align*}
N_{\nu'}(n)
& \ge \frac 1 2 \sum_{s= 0}^n \binom{n}s \prod_{i=0}^s N_I(|w_i|)
\ge \frac 1 2 \sum_{s= 0}^n \binom{n}s  c^{s+1} e^{h(\nu)(n-ks)} \\
&= \frac c 2 e^{h(\nu)n}  \sum_{s= 0}^n  \binom{n}s \left(c e^{-h(\nu)k}\right)^s 
= \frac c 2 e^{h(\nu)n}  \left( 1 + ce^{-h(\nu)k}\right)^n.
\end{align*}
In terms of entropy, this then implies $h(\nu')\ge h(\nu) + c e^{-h(\nu)k}$ and thus core entropy is not H\"older continuous for any exponent greater than $h(\nu)$. 
\end{proof}

\begin{lemma}[Characteristic branch point]
\label{Lem:mu}
Let $\nu$ be a non-recurrent kneading sequence that is not a bifurcation. Then there exists a characteristic periodic sequence $q$ that separates $\nu$ from its entire forward orbit, and there exists a natural number $r$ so that the following condition is satisfied:  for every (pre)periodic {characteristic} sequence $\mu$ on $[q,\nu]$ there exists a characteristic (pre)periodic sequence $\mu'$ with the following properties:
\begin{enumerate}
    \item \label{mu'_loc} $q \preceq \mu' \preceq \mu$;
    \item $q$ separates $\mu'$ from its entire forward orbit;
\label{mu'_iter} 
    \item \label{mu'_dist} 
$\diff(\mu',\nu) \ge \diff(\mu,\nu)-r-1$.
\end{enumerate}
\end{lemma}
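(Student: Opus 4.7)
The proof splits into the existence of the characteristic periodic $q$ and the construction of $\mu'$ with uniform loss $r$.

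For $q$, my plan is to combine non-recurrence with the internal-address construction of Lemma~\ref{Lem:int_add}. By non-recurrence there is some $D\in\N$ with $\diff(\nu,\sigma^i\nu)\le D$ whenever $\sigma^i\nu\neq \nu$ (automatic if $\nu$ is $\star$-periodic, since the orbit is then finite). Following the reasoning at the start of the proof of Lemma~\ref{Lem:renormalizable_nonperiodic}, density of precritical points together with $\nu$ being an endpoint of $\Hub(\nu)$ produces a precritical point $p\in\Hub(\nu)$ that lies on $[\nu,\sigma^i\nu]$ for every $i\ge 1$. Lemma~\ref{Lem:int_add} then yields an infinite sequence of characteristic $\star$-periodic $q_n\prec\nu$ on the critical path with $\diff(q_n,\nu)$ tending to $\infty$. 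I choose $n$ so that $\diff(q_n,\nu)>\diff(p,\nu)$; using that $\diff(\cdot,\nu)$ grows monotonically along the critical path as one approaches $\nu$ (Theorem~\ref{Thm:Hubbard_tree}~\eqref{Item:Diff}), this places $q:=q_n$ on the sub-arc $[p,\nu]$, so $q\in[p,\nu]\subset[\sigma^i\nu,\nu]$ for every $i\ge 1$, and $q$ separates $\nu$ from its forward orbit.

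For $\mu'$, given a (pre)periodic characteristic $\mu\in[q,\nu]$, I distinguish two cases. If the orbit of $\mu$ already avoids the component $K_1$ of $\Hub(\nu)\setminus\{q\}$ containing $\mu$, then $\mu'=\mu$ works with zero loss. Otherwise, I apply the argument of the first part to $\mu$ inside its own Hubbard tree $\Hub(\mu)$: the orbit of $\mu$ there is finite, and the internal address of $\mu$ supplies a characteristic $\star$-periodic $\hat\mu\prec\mu$ in $\Hub(\mu)$ that separates $\mu$ from its $\Hub(\mu)$-orbit. The Inclusion Theorem~\ref{Thm:InclusionTrees}\eqref{item:branch},~\eqref{it1},~\eqref{it2} transports $\hat\mu$ to a characteristic (pre)periodic $\mu'$ in $\Hub(\nu)$ with $q\preceq\mu'\preceq\mu$; orbit points of $\mu'$ in $\Hub(\nu)$ agree, up to a bounded number of iterations spent in $[\mu,\nu]$, with orbit points of $\hat\mu$ in $\Hub(\mu)$, so the separation of $\mu'$ by $q$ transfers. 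The constant $r$ will be taken as the maximum number of iterations $[q,\nu]$ needs to expand past $p$, a quantity depending only on $\nu$ and $q$ and not on $\mu$.

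The main obstacle will be the uniformity of $r$ in property~\eqref{mu'_dist}: while each individual $\mu$ admits a characteristic substitute $\mu'$, the bound $\diff(\mu',\nu)\ge\diff(\mu,\nu)-r-1$ must be uniform in $\mu$. This requires a quantitative version of the inclusion theorem that tracks how itineraries resynchronize with $\nu$ after transported orbit points cross the segment $[q,\nu]$. Showing that the resynchronization length is bounded by a constant depending only on $\nu$ and $q$ ultimately hinges on non-recurrence of $\nu$: the orbit of $\nu$ returns to a neighborhood of $\nu$ only with $\diff$ bounded by $D$, and the same bound governs how deeply any intermediate orbit can enter $K_1$, up to a universal additive correction.
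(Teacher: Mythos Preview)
Your construction of $q$ is roughly in the right spirit, though sloppier than the paper's: the paper first fixes the precritical point $\rho$ of depth $r:=\diff(q_1,\nu)$ where $q_1$ is chosen past both the last branch point on $[\star\nu,\nu]$ and the closest postcritical approach to $\nu$, and only then takes $q$ periodic on $[\rho,\nu]$. The number $r$ is fixed \emph{here}, once and for all, as the depth of $\rho$.

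The construction of $\mu'$, however, has a genuine gap. Passing to $\Hub(\mu)$ and picking a characteristic $\hat\mu$ from the internal address of $\mu$ gives you no control tied to the three required properties. First, nothing forces $q\preceq\hat\mu$: internal-address entries of $\mu$ can be small regardless of how close $\mu$ is to $\nu$, so $\hat\mu$ may land below $q$. Second, even if $q\preceq\hat\mu\preceq\mu$, characteristicity of $\hat\mu$ only says the orbit of $\hat\mu$ stays on the $\star\nu$-side of $\hat\mu$; it may still enter $[q,\hat\mu)$, so property~(2) does not follow. Third---and this is the point you yourself flag as ``the main obstacle'' without resolving---there is no mechanism linking $\diff(\hat\mu,\mu)$ to a quantity independent of $\mu$. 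Your proposed $r$ (``the maximum number of iterations $[q,\nu]$ needs to expand past $p$'') is not the quantity that governs the loss in $\diff$ for your $\hat\mu$.

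The paper's argument is completely different and much more direct. Given $\mu$ whose orbit re-enters $[q,\nu]$, let $m$ be the \emph{first} such return time. The geometric observation is that $\rho\in[\sigma^m\mu,\nu^m]$, whence $\diff(\nu,\mu)\le m+r$: this is the whole source of uniformity, and it uses $r=\diff(q_1,\nu)$ rather than an expansion time. One then constructs $\mu'$ as a preperiodic \emph{preimage of $q$} on $[\mu,q]$, chosen via a short case analysis on how many injective iterations $[\mu,q]$ admits (either $m$, or $m-1$, or fewer). In every case $\mu'$ agrees with $\mu$ for at least $m-1$ entries, so $\diff(\mu',\nu)\ge m-1\ge\diff(\mu,\nu)-r-1$; and property~(2) is automatic because after $\sim m$ iterations $\mu'$ lands on $q$ and thereafter follows the orbit of $q$, which is characteristic and was chosen behind all postcritical approaches. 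The Inclusion Theorem plays no role in this step.
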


\begin{proof}
On the critical path $[\star\nu,\nu]$ of $\Hub(\nu)$, let $b$ be the branch point closest to $\nu$; if there is no such branch point, let $b=\star\nu$. Non-recurrence implies that $\nu$ has a neighborhood without branch points (there are only finitely many precritical points of given depth, so branches that branch off from the critical path very close to $\nu$ must have endpoints that are postcritical and close to $\nu$). 
Therefore, $[b,\nu]$ is a non-trivial interval on the critical path without branch points on its interior.

Let $q_0:=\sup\{\nu^i\cap[\star\nu,\nu]\}$ and $q_1=\max\{q_0,b\}$. Since $\nu$ is non-recurrent, we have $q_1\neq\nu$. Let $r:=\diff(q_1,\nu)$ and let $\rho$ be the unique precritical point of depth $r$ on $[q_1,\nu]$. Finally, let $q$ be a characteristic periodic point on $[\rho,\nu]$ of minimal period, but at least period $r$. Then $\rho$ and $q$ have at least $r$ common entries. Since $\nu$ is not a bifurcation, there are infinitely many precritical points close to $\nu$, so $\rho$ and $q$ exist. 

Now we show that this choice of $q$ and $r$ makes it possible to choose for every $\mu$ a sequence $\mu'$ with the required properties.

If the orbit of $\mu$ never jumps behind $q$, simply choose $\mu'=\mu$ and all properties are satisfied. Otherwise, choose $m>0$ minimal such that $\sigma^m \mu\in[q,\nu]$.
The order on  $\Hub(\nu)$ between the points $\nu$, $\mu$, $\sigma^m \mu$, $q$, $\rho$, and $\nu^m$ is as indicated in Figure~\ref{Fig:RelativeOrderHubbNu} because $\rho$ is constructed so that it separates $\nu$ from all orbit points of $\nu$, while $\sigma^m \mu\in[\nu,q]$ by construction, and $\mu\in[\nu,\sigma^m\mu]$ because $\mu$ is characteristic.

Since $\rho\in[\sigma^m \mu,\nu^m]$, hence $\diff(\sigma^m\mu,\nu^m)\le r$, we have
\begin{equation}
\label{eq:mr}
\diff(\nu, \mu)\le m+r.
\end{equation}

We will distinguish three cases. In all of these our point $\mu'$ will be a preperiodic preimage of $q$. The three conditions \eqref{mu'_loc}--\eqref{mu'_dist} that $\mu'$ should satisfy all have geometric interpretations: the first describes an interval where $\mu'$ should be located, the third requires that $\mu'$ should be sufficiently close to $\mu$, and the seond implies that $\mu'$ cannot be too close to $\mu$ because otherwise its $m$-th iterate would violate condition~\eqref{mu'_iter}.

\medskip 
\emph{Case 1: the map $\sigma^{m}$ is injective on $[\mu,q]$}. In this case, we have a homeomorphism $\sigma^{ m}\colon[\mu,q]\to[\sigma^m \mu,\sigma^m q]\ni q$, so there is a unique preimage $\mu':=(\sigma^{ m})^{-1}(q)$ on $[\mu,q]$. It satisfies condition \eqref{mu'_loc} by construction, and it satisfies condition \eqref{mu'_iter}  because the first $m-1$ iterates are obvioulsy before $q$, and from then on the claim follows because $q$ is characteristic. Condition \eqref{mu'_dist} follows from \eqref{eq:mr} and $\diff(\mu, \mu')\ge m$.
\medskip

\emph{Case 2: the maximal number of injective iterations on $[\mu,q]$ is $k\le m-2$}. Then $\sigma^{ k}\colon[\mu,q]\to[\sigma^k \mu,\sigma^k q]\ni\star\nu$ and there is a unique precritical point $\zeta\in[\mu,q]$ of depth $k+1$, so $\sigma^{(k+1)}\colon[\mu,\zeta]\to [\sigma^{k+1}\mu,\nu]$ is a homeomorphism. Since $k+1<m$, we have $q\in[\sigma^{k+1} \mu,\nu]$. There is a unique $\mu'\in[\mu,\zeta]$ with $\sigma^{(k+1)}(\mu')=q$, and this choice satisfies again conditions \eqref{mu'_loc} and \eqref{mu'_iter} for the same reasons as in case 1. In this case our choice of $\mu'$ is not closest possible to $\mu$, so if $\diff(\mu, \mu')\ge m-1$ does not hold, simply repeat the argument for $\zeta$ in place of $q$ (note that we have not used periodicity of $q$ in the proof).

\medskip

\emph{Case 3: the maximal number of injective iterations on $[\mu,q]$ is $k= m-1$}. In this case we again have $\sigma^{ k}\colon[\mu,q]\to[\sigma^k \mu,\sigma^k q]\ni\star$ and there is a unique precritical point $\zeta\in[\mu,q]$ of depth $k+1=m$. Now we consider the homeomorphism $\sigma^{m}\colon[\zeta,q]\to [\nu,\sigma^m q]\ni q$ and the unique point $\mu'\in[\zeta,q]$ with $\sigma^{m}(\mu')=q$. It again satisfies conditions \eqref{mu'_loc} and \eqref{mu'_iter} by construction, and condition \eqref{mu'_dist}   follows from $\diff(\mu, \mu')=m-1$. 
\end{proof}

\begin{figure}
\begin{picture}(280,20)
\put(3,13){\line(1,0){280}}
\put(0,3){$\nu$}
\put(76,3){$\mu$}
\put(135,3){$\sigma^m \mu$}
\put(177,3){$q$}
\put(207,3){$\rho$}
\put(235,3){$\nu^m$}
\put(3,13){\circle*{4}}
\put(80,13){\circle*{4}}
\put(140,13){\circle*{4}}
\put(180,13){\circle*{4}}
\put(210,13){\circle*{4}}
\put(240,13){\circle*{4}}
\end{picture}
\caption{Relative position of points of Lemma \ref{Lem:mu} (the order increases to the left). The points $\zeta$ and $\mu'$ lie between $\mu$ and $q$, but possibly at either side of $\sigma^m \mu$. }
\label{Fig:RelativeOrderHubbNu}
\end{figure}
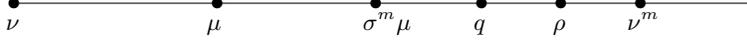

\begin{theorem}[H\"older continuity, non-renormalizable case]
\label{Thm:HoelderNonRenorm}
Let $\nu$ be non-recurrent, non-renormalizable, and tree-finite,  and such that $h=h(\nu)>0$. Then for every $\eps>0$ there is a neighborhood $U$ of $\nu$ such that all $\nu'\in U$ satisfy 
\[
|h(\nu)-h(\nu')|\le  e^{-(h-\eps) k}
\]
where $k=\Diff(\nu,\nu')$. 
\label{Thm:Hoelder_continuity}
\end{theorem}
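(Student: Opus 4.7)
The plan is to combine the uniform-expansion framework already set up in Section~\ref{Sec:estimates} with the Weak Branch Theorem and the refined characteristic-point construction of Lemma~\ref{Lem:mu}. First I would package the hypotheses on $\nu$: Lemmas~\ref{Lem:renormalizable_nonperiodic} and \ref{Lem:renormalizable_periodic} show that a non-recurrent, non-renormalizable $\nu$ (periodic or not) is uniformly expanding with some parameter $\lambda$, and together with tree-finiteness $\kappa := \kappa(\nu) < \infty$, Lemma~\ref{Lem:UpperBoundEnya} yields a uniform bound $N_H^\nu(n) \le C e^{hn}$ with $C := \kappa\, e^{h(\lambda + \kappa)}$. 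This immediately settles the direction $\nu' \succeq \nu$: Lemma~\ref{Lem:estimate_from_above} gives $h(\nu') - h(\nu) \le 2C e^{-hk}$, which is at most $e^{-(h-\eps)k}$ on the neighborhood $U := \{\nu' : \Diff(\nu, \nu') \ge k_0\}$ once $k_0$ is chosen large enough to absorb $2C$ into the exponent.

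For the harder direction, I would apply the Weak Branch Theorem~\ref{Thm:branch_weak} to produce a $\star$-periodic common predecessor $\mu$ with $\mu \preceq \nu$, $\mu \preceq \nu'$, and $\Diff(\mu, \nu),\Diff(\mu, \nu') \ge k$. Since non-renormalizability of $\nu$ excludes it from being a bifurcation, Lemma~\ref{Lem:mu} applies and furnishes a characteristic (pre)periodic refinement $\mu'$ on $[q, \mu]$ whose entire forward orbit is separated from $\mu'$ by the characteristic periodic point $q$, with $\diff(\mu', \nu) \ge \diff(\mu, \nu) - r - 1 \ge k - r - 1$. Transitivity (Theorem~\ref{Thm:transitive}) gives $\mu' \preceq \nu$ and $\mu' \preceq \nu'$, and the $\diff$-triangle inequality propagates the same lower bound to $\diff(\mu', \nu')$. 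If $\mu'$ admits its own bound $N_H^{\mu'}(n) \le C^* e^{h(\mu') n}$ with $C^*$ independent of $\nu'$, then two applications of Lemma~\ref{Lem:estimate_from_above} with $\mu'$ as the base give $h(\nu) - h(\mu'),\, h(\nu') - h(\mu') \le 2 C^* e^{-h(\mu')(k - r - 1)}$, and the inequality $|h(\nu) - h(\nu')| \le (h(\nu) - h(\mu')) + (h(\nu') - h(\mu'))$ produces the target bound $|h(\nu) - h(\nu')| \le e^{-(h-\eps)k}$ for $k$ large, once one also checks $h(\mu') \ge h - \eps/2$ for $k$ large—which follows from $\mu' \preceq \nu$ combined with the initial-segment approximation of the internal address of $\nu$ afforded by Lemma~\ref{Lem:int_add}.

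The main obstacle will be exactly the uniform control on $C^*$. Monotonicity (Lemma~\ref{Lem:KappaMonotone}) already gives $\kappa(\mu') \le \kappa$, but invoking Lemma~\ref{Lem:UpperBoundEnya} for $\mu'$ also requires a bound on its uniform-expansion parameter $\lambda(\mu')$ that does not depend on $\nu'$. The hope is that the $q$-separation property singled out by Lemma~\ref{Lem:mu} does exactly this: because $q$ blocks every forward iterate of $\mu'$ from approaching $\mu'$ too closely, the expansion dynamics of $\mu'$ essentially mirror those of $\nu$ past $q$, so $\lambda(\mu')$ can be bounded in terms of $\lambda(\nu)$ and the finite combinatorics of $\Hub(\nu)$; moreover the same separation forces $\mu'$ to be non-renormalizable, so that Lemma~\ref{Lem:renormalizable_periodic} is applicable in the first place. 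Making this intuition rigorous—verifying non-renormalizability of $\mu'$ and extracting a $\nu$-dependent but $\nu'$-independent $\lambda^*$ with $\lambda(\mu') \le \lambda^*$—is the technical heart of the argument.
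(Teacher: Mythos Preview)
Your plan matches the paper's proof almost exactly: Weak Branch Theorem to produce $\mu \preceq \nu,\nu'$, then Lemma~\ref{Lem:mu} to refine to a $\mu'$ whose forward orbit is separated from $\mu'$ by $q$, then Lemma~\ref{Lem:UpperBoundEnya} for $\mu'$ followed by two applications of Lemma~\ref{Lem:estimate_from_above}. You have also correctly located the crux, namely a $\nu'$-independent bound on $\lambda(\mu')$.

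The paper resolves this obstacle by inserting one auxiliary object: a non-renormalizable $\star$-periodic $q'$ with $q \prec q' \prec \nu$. The \emph{strong} form of Lemma~\ref{Lem:renormalizable_periodic} gives a $Q_1$ with $\sigma^{Q_1}([q,q']) = \Hub(q')$ inside $\Hub(q')$; the Inclusion Theorem then transports this to any tree above $q'$: in $\Hub(\mu')$ (with $\mu' \succ q'$, which holds once $U$ is small enough) one gets $\sigma^{Q_1}([q,q']) \supset \Hub_{q'}(\mu') \ni \star\mu'$. Since the $q$-separation forces $[q,q'] \subset [\mu',(\mu')^i]$ for every iterate $(\mu')^i$, this yields uniform expansion of $\mu'$ with the fixed parameter $Q = Q_1 + 1$, bypassing any separate verification of non-renormalizability of $\mu'$. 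One small correction to your outline: the lower bound $h(\mu') \ge h - \eps/2$ does not follow from Lemma~\ref{Lem:int_add}; the paper instead invokes continuity of core entropy (from \cite{DimaEntropy,TiozzoContinuity}) to choose $U$ so small that every $\nu' \in U$, and hence also $\mu'$, already satisfies $|h(\cdot) - h(\nu)| < \eps$.
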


\begin{remark}
We may drop locality and say that the statement holds for all $\nu'$, at the expense of introducing a multiplicative constant $C_\nu=e^{hK}\log 2$ where $K=\max_{\nu'\not \in U} \Diff(\nu,\nu')$. 
\end{remark}

\begin{proof}
Using non-recurrence, choose $q\prec\nu$ and $r$ according to Lemma~\ref{Lem:mu}. Choose a periodic, characteristic, non-renormalizable sequence  $q'\in(q,\nu)$ (remember that periodic, characteristic sequences exist arbitrarily close to $\nu$ by Lemma~\ref{Lem:int_add}; since $\nu$ is non-renormalizable, there are sufficiently nearby sequences that are not renormalizable either). Let $L:=\Diff(q',\nu)$.

We want to show that $\nu$ is uniformly expanding with a parameter that is dynamically controlled by $q$ and $q'$. By Lemma~\ref{Lem:renormalizable_periodic}, the kneading sequence $q'$ is uniformly expanding in the stronger sense that there is a parameter $Q_0>0$ such that $\sigma^{Q_0}([q',\sigma^k(q')])=\Hub(q')$ for every $k$ with $\sigma^k(q')\neq q'$. Since the interval $[q',q]\subset\Hub(q')$ contains a precritical point of $q'$, we then find $Q_1\ge Q_0$ such that $\sigma^{Q_1}([q',q])=\Hub(q')$. 

Then by Theorem~\ref{Thm:InclusionTrees}, the interval $[q,q']$ can be refound in $\Hub(\nu)$ such that $\sigma^{Q_1}([q,q'])\supset \Hub_{q'}(\nu)\ni\star\nu$. Thus after $Q:=Q_1+1$ iterations it covers the critical path $[\star\nu, \nu]$. 

By the choice of $q$, we have $[\nu,\nu^i]\supset [q',q]$ for any iterate $\nu^i$ of $\nu$ distinct of $\nu$. Hence, the $Q$-th iterate of $[\nu,\nu^i]$ must contain the entire critical path. So indeed, $\nu$ is uniformly expanding with parameter $Q$. 

Suppose that $\eps<h(\nu)$. 
Let $U$ be a neighborhood of $\nu$ such that all $\nu'\in U$ have $|h(\nu')-h(\nu)|<\eps$; by continuity of entropy \cite{DimaEntropy,TiozzoContinuity}, there is indeed such a neighborhood. Restrict $U$ if necessary so that all $\nu'\in U$ satisfy $\Diff(\nu',\nu)>2(L+r)$.

Now consider a particular $\nu'\in U$ and set $k:=\Diff(\nu,\nu')$. Let $\mu$ be the periodic point from Theorem~\ref{Thm:branch_weak} which satisfies $\mu\preceq \nu$ and $\mu \preceq \nu'$. That theorem yields
\[
\Diff(\nu,\mu)\ge \Diff(\nu,\nu')=k
>2(L+r)>L=\Diff(q',\nu)\;.
\] 
Since $\nu$, $\mu$, $q'$, and $q$ are linearly ordered, this implies  $\mu\succ q'\succ q$.  Lemma~\ref{Lem:mu} provides a sequence $\mu'$ with $q \prec \mu'\prec\mu$ such that $q$ separates $\mu'$ from its forward orbit and such that
\[
 \Diff(\mu',\nu) \ge \Diff(\mu,\nu)-(r+1)\ge k-(r+1) \ge 2L+r-1 >L
\;,
\] 
and this implies $\mu'\succ q'$. 

Similar as for $\nu$, the sequence $\mu'$ is uniformly expanding with parameter $Q$ because of Theorem~\ref{Thm:InclusionTrees} and the fact $[\mu',(\mu')^i]\supset [q',q]$, where $(\mu')^i$ denotes any iterate of $\mu'$ distinct from $\mu'$. 

Since $\mu'\prec\nu$, we have $\kappa(\mu')\le\kappa(\nu)<\infty$ (Lemma~\ref{Lem:KappaMonotone}). Therefore, Lemma~\ref{Lem:UpperBoundEnya} implies that in the dynamics of $\Hub(\mu')$, we have 
\[
N_H(n) \le Ce^{h(\mu')n} \text{ with } C= \kappa(\mu') e^{h(\mu')(Q+\kappa(\mu'))}\le  \kappa(\nu) e^{h(Q+\kappa(\nu))}
\;. 
\]

We can thus apply Lemma~\ref{Lem:estimate_from_above} to $\nu\succ\mu'$  and obtain 
\[
0\le h(\nu)-h(\mu')\le 2C e^{-h(\mu')\Diff(\nu,\mu')} 
\le 2C e^{-h(\mu')(k-(r+1))} 
\;.
\]
The analogous bound holds for the entropy difference between $\nu'\succ\mu'$, using $\Diff(\mu',\nu')\ge \min\{\Diff(\mu',\nu),\Diff(\nu,\nu'\}\ge \min\{k-(r+1),k\}=k-r-1$. 
We thus have
\[
|h(\nu)-h(\nu')| \le  2C e^{-h(\mu')(k-(r+1))} =  2Ce^{h(\mu')(r+1)} e^{-h(\mu')k}
\;.
\]
By construction, we have $h(\nu)\ge h(\mu')\ge h(\nu)-\eps$, so with
\[
C_\nu=  2Ce^{h(\mu')(r+1)}\le 2\kappa(\nu) e^{h(Q+\kappa(\nu))} e^{h(r+1)} = 2\kappa(\nu) e^{h(Q+\kappa(\nu)+(r+1))}
\]
we have 
\begin{equation}
\label{eq:hoelderwithC}
|h(\nu)-h(\nu')|\le C_\nu  e^{-(h-\eps) k}
\;.
\end{equation}
To get rid of the constant $C_\nu$ in \eqref{eq:hoelderwithC}, one adjusts $\eps$ and shrinks the neighborhood $U$ by enlarging $k$ such that $C_\nu e^{-\eps k}<1$; this gives

\[
|h(\nu)-h(\nu')|\le C_\nu  e^{-(h-2\eps+\eps) k} = C_\nu e^{-\eps k} e^{-(h-2\eps)k}<e^{-(h-2\eps) k}
\;.
\]
\end{proof}

\section{H\"older continuity --- the renormalizable case}
\label{Sec:hoelder_r}

We start this section with a characterization of renormalizable kneading sequences.

\begin{lemma}[Renormalizable kneading sequence]
\label{Lem:LittleHubbardTree}
A kneading sequence $\nu$ is $p$-renormalizable (with $p\in\N$, $p\ge 2$)  if and only if there exists a characteristic periodic sequence $\mu_\circ\in\{\0,\1\}^\N$ of period $p$ such that $\diff(\sigma^{kp}(\nu),\mu_\circ)\ge p$ for all $k\ge 0$, but  $\sigma^{p}(\nu)\neq \nu$.          
\end{lemma}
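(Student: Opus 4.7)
The plan is to prove the two implications separately, using the little Hubbard tree $K$ on one side and the characteristic $p$-periodic point $\mu_\circ$ on the other.

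For the forward direction, suppose $\nu$ is $p$-renormalizable with little Hubbard tree $K$. Since $\star\nu\notin K$ (as noted after Definition~\ref{Def:Renormalizable}) and $\nu\in K$, the subtree $K$ has a unique ``root'' $\mu_\circ\in K$ where the arc from $\star\nu$ first meets $K$, and $\mu_\circ$ lies on the critical path $[\star\nu, \nu]$. I would then establish: (i) Lemma~\ref{Lem:lht_cap} forces each $\sigma^j(K)$ for $1\le j\le p-1$ to meet $K$ only at $\mu_\circ$ itself (if at all), so the iterates $\sigma^j\mu_\circ$ lie on the $\star\nu$-side of $\mu_\circ$, which makes $\mu_\circ$ characteristic; (ii) $\mu_\circ$ has exact period $p$, because the same intersection pattern shows $\mu_\circ,\sigma\mu_\circ,\ldots,\sigma^{p-1}\mu_\circ$ to be pairwise distinct while $\sigma^p\mu_\circ\in K$ must coincide with the unique root $\mu_\circ$; (iii) the bound $\diff(\sigma^{kp}\nu,\mu_\circ)\ge p$ follows because the interval $[\sigma^{kp}\nu,\mu_\circ]\subset K$ maps injectively under each of $\sigma,\ldots,\sigma^{p-1}$ (its iterates lie in $\sigma^j(K)$ and so avoid $\star\nu$), so Theorem~\ref{Thm:Hubbard_tree}~\eqref{Item:Diff} combined with the $p$-periodicity of $\mu_\circ$ gives the inequality. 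Finally $\sigma^p\nu\ne\nu$ is immediate from Definition~\ref{Def:Renormalizable}~\eqref{Item:RenormPeriodic} (and trivial if $\nu$ is non-periodic).

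For the reverse direction, assume a characteristic $p$-periodic $\mu_\circ$ with the stated properties is given. Let $K$ denote the subtree of $\Hub(\nu)$ spanned by $\{\sigma^{kp}\nu\}_{k\ge 0}$; it contains more than one point because $\sigma^p\nu\ne\nu$. I would first argue that every $\sigma^{kp}\nu$ lies on the $\nu$-side of $\mu_\circ$: the local arms at $\mu_\circ$ are distinguished by the initial entries of their itineraries, and among the arms lying in the $\star\nu$-component of $\Hub(\nu)\sm\{\mu_\circ\}$ the starting itineraries are cyclic shifts of the period word of $\mu_\circ$, so their $\diff$ with $\mu_\circ$ is strictly less than $p$, contradicting $\diff(\sigma^{kp}\nu,\mu_\circ)\ge p$ by Theorem~\ref{Thm:Hubbard_tree}~\eqref{Item:Diff}. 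Hence $K$ lies on the $\nu$-side of $\mu_\circ$, so $\star\nu\notin K$ and $K$ is a proper subtree. The invariance $\sigma^p(K)\subset K$ then follows because the spanning endpoints $\sigma^{kp}\nu$ map into the spanning set and the connecting intervals of $K$ avoid $\star\nu$, so $\sigma^p$ acts as a tree map from $K$ into itself. Applying Lemma~\ref{Lem:renorm} yields $p$-renormalizability; the exceptional case of $\nu$ being $\star$-periodic of period dividing $p$ is excluded by $\sigma^p\nu\ne\nu$.

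The main obstacle is the forward direction's claim that $\mu_\circ$ has exact period $p$: identifying $\mu_\circ$ as the combinatorial root of $K$ and ruling out smaller periods via Lemma~\ref{Lem:lht_cap} requires care, as does verifying that $\sigma^p$ actually fixes $\mu_\circ$ rather than mapping it to some other interior point of $K$. A secondary subtlety appears in the reverse direction when $\nu$ is $\star$-periodic: the wildcard convention at positions dividing the period of $\nu$ must be tracked carefully when one asserts that only the $\nu$-arm admits starting itineraries matching $\mu_\circ$ for $p-1$ entries.
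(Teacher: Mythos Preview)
Your reverse direction is essentially the paper's argument: once one knows that $\nu$ and every $\sigma^{kp}\nu$ share their first $p-1$ entries (which is exactly the hypothesis $\diff(\sigma^{kp}\nu,\mu_\circ)\ge p$), each arc $[\nu,\sigma^{kp}\nu]$ maps under $\sigma^p$ into $[\nu^p,\nu^{(k+1)p}]$ or $[\nu,\nu^p]\cup[\nu,\nu^{(k+1)p}]$, and Lemma~\ref{Lem:renorm} finishes. Your extra remark that characteristicness of $\mu_\circ$ places all $\sigma^{kp}\nu$ behind $\mu_\circ$, hence $K$ is proper, is a useful point the paper leaves implicit.

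Your forward direction, however, takes a genuinely different route from the paper and has a real gap. The paper does \emph{not} define $\mu_\circ$ geometrically as the root of $K$. Instead it first observes that $\sigma^p(K)\subset K$ forces the itinerary of $\nu$ to be $p$-periodic except at positions divisible by $p$; it then \emph{defines} the $\star$-periodic sequence $\mu$ symbolically by $\Diff(\nu,\mu)\ge p$, and appeals to the internal-address machinery (Lemma~\ref{Lem:int_add}) to produce a characteristic point with itinerary $\up\mu$ or $\lo\mu$ in $\Hub(\nu)$. The geometric ``root'' never appears.

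The gap in your approach is exactly the obstacle you flag: you have no argument that the root $\mu_\circ$ is $\sigma^p$-fixed. Knowing $\sigma^p\mu_\circ\in K$ does not force $\sigma^p\mu_\circ$ to be the point of $K$ nearest $\star\nu$; there is no a priori reason $\sigma^p$ should respect that extremality. Relatedly, your step~(i) overstates Lemma~\ref{Lem:lht_cap}: that lemma says $K\cap\sigma^j(K)$ is at most a single point, not that this point is $\mu_\circ$, so the conclusion ``the iterates $\sigma^j\mu_\circ$ lie on the $\star\nu$-side of $\mu_\circ$'' does not follow. Your geometric picture is correct a posteriori (the root does coincide with the paper's $\mu_\circ$), but establishing this directly would require, for instance, producing a $\sigma^p$-fixed point in $K$ via a tree fixed-point argument, identifying its itinerary, and then showing it is the root and is characteristic --- none of which is shorter than the paper's symbolic detour through Lemma~\ref{Lem:int_add}.
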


The condition $\diff(\sigma^{kp}(\nu),\mu_\circ)\ge p$ for all $k\ge 0$ means that the sequence $\nu$ differs from the periodic sequence $\nu_*$ only at positions that are multiples of $p$.

We thus have a $\star$-periodic kneading sequence $\mu$ such that $\mu_\circ=\up\mu$ or $\mu_\circ=\lo\mu$. The proof will imply that the latter case can only apply if $\nu$ is not complex admissible. 

Clearly, $\mu$ is uniquely determined by $\nu$ and $p$ (if it exists). We call $\mu$ the \emph{base sequence} of the renormalization and $\mu_\circ$ its \emph{dynamical sequence}. The base sequence may itself be renormalizable (with period a strict divisor of $p$). Therefore, there is a minimal period $p\ge 2$ such that $\nu$ is $p$-renormalizable. The base sequence for this minimal period is called the \emph{maximal base sequence}; then $\mu$ itself is not renormalizable. 

\begin{proof}

Suppose $\nu$ is $p$-renormalizable and let $K$ be its little Hubbard tree. Since $K$ contains more than one point, we clearly have $\sigma^{p}(\nu)\neq \nu$. Moreover, $\sigma^p(K)\subset K$ implies that the itinerary of $\nu$ is periodic of period $p$, except that the entries at positions that are multiples of $p$ may be arbitrary. Let $\mu$ thus be the unique $\star$-periodic sequence of period $p$ satisfying $\Diff(\nu, \mu)\ge p$. If the internal address of $\mu$ is an initial segment of the internal address of $\nu$, then Lemma~\ref{Lem:int_add} implies $\mu \prec \nu$ and $\mu_\circ=\up\mu$ satisfies $\diff(\sigma^{kp}(\nu),\mu_\circ)\ge p$. Otherwise, the first $p$ entries of $\nu$ coincide with $\lo\mu$. Then the internal address of a non-standard bifurcation $\eta$ of $\mu$ must be contained in the internal address of $\nu$. By Lemma~\ref{Lem:int_add}, we thus have $\eta \preceq \nu$ and therefore the sequence $\mu_\circ=\lo\mu$ is characteristic in $\Hub(\nu)$. It clearly satisfies $\diff(\sigma^{kp}(\nu),\mu_\circ)\ge p$. 

For the converse, suppose $\Hub(\nu)$ contains a $p$-periodic sequence $\mu\prec\nu$ such that 
$
\diff(\sigma^{kp}(\nu),\mu_\circ)\ge p  $  for all $k\ge 0$ and $\sigma^p(\nu)\neq \nu$. Let $K$ be the connected hull of the points $\nu^{kp}$ for $k\ge 0$. By assumption,
$K$ contains more than one point, and $\nu$ and $\nu^{kp}$ coincide for at least $p-1$ entries. So $\sigma^p$ maps the interval $[\nu,\nu^{kp}]$ onto $[\nu^p,\nu^{2p}]$ or onto $[\nu,\nu^p]\cup [\nu,\nu^{2p}]\subset K$. So indeed we have $\sigma^p(K)= K$ and therefore $\nu$ is $p$-renormalizable by Lemma~\ref{Lem:renorm}.\end{proof}

\begin{definition}[Little Mandelbrot set]
The \emph{little Mandelbrot set} $\LM(\mu)$ of a $\star$-periodic kneading sequence $\mu$ is defined as the set of renormalizable kneading sequences $\nu$ with base sequence $\mu$. 
\end{definition}

\begin{lemma}[Little Mandelbrot set connected]
\label{Lem:lilmandel}
If $\mu \prec \eta \prec \nu$ and $\nu \in \LM(\mu)$, then $\eta \in \LM(\mu)$.
\end{lemma}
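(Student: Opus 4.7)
The plan is to verify the characterization of $\eta\in\LM(\mu)$ provided by Lemma~\ref{Lem:LittleHubbardTree}: I must exhibit a characteristic $p$-periodic sequence $\mu_\circ\in\{\up\mu,\lo\mu\}$ in $\Hub(\eta)$ with $\diff(\sigma^{kp}\eta,\mu_\circ)\ge p$ for every $k\ge0$, plus $\sigma^p\eta\neq\eta$. Existence of the characteristic $\mu_\circ$ comes for free, since $\mu\prec\eta$ puts $\up\mu$ (and therefore $\lo\mu$ as well, by Theorem~\ref{Thm:InclusionTrees}\eqref{lomu}) as a characteristic point of $\Hub(\eta)$; by transitivity (Theorem~\ref{Thm:transitive}), $\mu\prec\nu$, and the $\mu_\circ$ witnessing $\nu\in\LM(\mu)$ is the same one.

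To establish the $\diff$ condition, I would work inside $\Hub(\nu)$. Let $K$ denote the little Hubbard tree of $\nu$; then $K\cap[\star\nu,\nu]=[\mu_\circ,\nu]$. Since $\mu\prec\eta\prec\nu$, the characteristic point $\tilde\eta$ lies on the critical path of $\Hub(\nu)$, and the $\prec$-relation $\mu\prec\eta$ places $\mu_\circ$ between $\star\nu$ and $\tilde\eta$ on this path; hence $\tilde\eta\in[\mu_\circ,\nu]\subset K$. By $\sigma^p$-invariance of $K$, each $\sigma^{kp}\tilde\eta$ also lies in $K$.

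The crux is then the following sub-lemma: for every non-precritical $y\in K$, the first $p-1$ entries of the itinerary of $y$ equal $(\mu_\circ)_1,\ldots,(\mu_\circ)_{p-1}$. The $i$-th symbol of $y$ is determined by which component of $\Hub(\nu)\setminus\{\star\nu\}$ contains $\sigma^{i-1}(y)\in\sigma^{i-1}(K)$; this set is connected and contains $\nu^{i-1}$, which lies on the $(\mu_\circ)_i$-side of $\star\nu$ because $\nu\in\LM(\mu)$ forces $\nu_j=(\mu_\circ)_j$ whenever $p\nmid j$. So it suffices to show $\sigma^{i-1}(K)\not\ni\star\nu$ for $i=1,\ldots,p-1$. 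Were $\star\nu$ in that image, there would exist $z\in K$ with $\sigma^{i}(z)=\nu$, hence $\nu\in K\cap\sigma^i(K)$. By Lemma~\ref{Lem:lht_cap}, this intersection is at most a single point; but a careful analysis (analogous to the preliminary argument in the proof of Lemma~\ref{Lem:renorm}) shows that this single point cannot be the critical value $\nu$, since $\nu$ is an endpoint of $\Hub(\nu)$ and the critical orbit of $\nu$ returns to $K$ only at multiples of $p$. Hence no precritical point of depth smaller than $p$ lies in $K$, and the sub-lemma follows. Applying it to each $\sigma^{kp}\tilde\eta\in K$ yields $\diff(\sigma^{kp}\tilde\eta,\mu_\circ)\ge p$; translating back from $\tilde\eta$ to $\eta$ gives the required condition.

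Finally, $\sigma^p\eta\neq\eta$ is immediate: a $\star$-periodic $\eta$ with period dividing $p$ would, in conjunction with $\mu\prec\eta$, force the internal address of $\mu$ (ending in $p$) to be an initial segment of a shorter address, contradicting strictness. The main obstacle I anticipate is the sub-lemma's claim that $K$ contains no precritical point of depth less than $p$; everything else is a direct assembly of results already available from the preceding sections.
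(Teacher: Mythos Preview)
Your argument is largely sound, but the step you treat as routine is actually the shaky one, while the step you flag as the ``main obstacle'' is fine.

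\medskip
\textbf{The actual gap.} You assert $K\cap[\star\nu,\nu]=[\mu_\circ,\nu]$ and deduce $\tilde\eta\in K$. Neither direction of that equality is justified. The little Hubbard tree $K$ is the hull of the orbit $\{\sigma^{kp}\nu\}$; there is no a priori reason the $p$-periodic point $\mu_\circ$ should lie in $K$, nor that $K$ meets the critical path exactly down to $\mu_\circ$. Without this, you do not know that the characteristic point $\tilde\eta$ sits inside $K$, and your sub-lemma (which is stated only for $y\in K$) does not apply to it. By contrast, your sub-lemma itself --- that $\sigma^j(K)$ avoids $\star\nu$ for $j=0,\dots,p-2$ --- follows cleanly from Lemma~\ref{Lem:lht_cap} together with the observation (borrowed from the proof of Lemma~\ref{Lem:renorm}) that a single intersection point cannot be the endpoint~$\nu$; this part is not the obstacle you anticipate.

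\medskip
\textbf{Comparison with the paper.} The paper sidesteps the issue entirely by working on the interval $[\mu_\circ,\nu]$ rather than on $K$. The point $\tilde\eta$ lies on $[\mu_\circ,\nu]$ for free, since both $\mu_\circ$ and $\tilde\eta$ are characteristic with $\mu\prec\eta$. The paper then argues symbolically: $\nu$ agrees with $\mu_\circ$ off multiples of~$p$, the recursive construction of precritical points on $[\mu_\circ,\nu]$ preserves this property (each new precritical depth is forced to be a multiple of~$p$), and density of precritical points pushes the property to every itinerary on the interval, in particular to $\tilde\eta$. Your topological route via $K$ and Lemma~\ref{Lem:lht_cap} is a genuinely different and more geometric argument; it can be repaired either by proving $\tilde\eta\in K$ directly, or more simply by running your injectivity argument on $[\mu_\circ,\nu]$ instead of $K$ (since $\diff(\mu_\circ,\nu)\ge p$, this interval maps injectively for $p-1$ steps, and $\sigma^p$ sends it into $[\mu_\circ,\nu]\cup K$, which is $\sigma^p$-invariant). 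Finally, your justification of $\sigma^p\eta\neq\eta$ via internal addresses is loose; it is cleaner to note that if $\tilde\eta$ were $p$-periodic and agreed with $\mu_\circ$ in the first $p-1$ entries, then $\tilde\eta\in\{\up\mu,\lo\mu\}$, forcing $\eta=\mu$ and contradicting strictness of $\mu\prec\eta$.
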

\begin{proof}
By Lemma~\ref{Lem:LittleHubbardTree}, all entries of $\nu$ that are not at a position that is a multiple of the period of $\mu$, coincide with $\mu$. Recursively, one can see that the same property holds for all precritical points on $[\mu, \nu]\subset \Hub(\nu)$. The density properties of precritical points imply that this property holds for all itineraries on this interval, in particular for $\eta$. Hence, $\eta$ has $\mu$ as base sequence of renormalization by Lemma~\ref{Lem:LittleHubbardTree}.
\end{proof}

Lemma~\ref{Lem:LittleHubbardTree} gives rise to the following definition:

\begin{definition}[De-renormalized sequence]
For a $p$-renormalizable sequence $\nu$ define a map $\rho_p\colon \{\0,\1,\star\}^\infty \to \{\0,\1,\star\}^\infty$ such that $s=s_1s_2s_3\dots$ maps to $\rho_p(s)=s_ps_{2p}s_{3p}\dots$. We call $\rho_p(\nu)$ the \emph{de-renormalized sequence} of $\nu$ (with respect to $p$-renormalization).
\end{definition}

In other words, $\rho_p(s)$ is the sub-sequence with positions that are multiples of $p$: these are exactly the entries that are not fixed by the renormalization. 

\hide{
Let $\nu$ be a $p$-renormalizable kneading sequence and $\rho_p\colon \{\0,\1,\star\}^\infty \to \{\0,\1,\star\}^\infty$ be the map on sequences $s=s_1s_2s_3\dots$ to the sequences $\rho_p(s)=s_ps_{2p}s_{3p}\dots$ taking only the entries not predetermined by the base sequence. We then call $\rho_p(\nu)$ the \emph{de-renormalized sequence} of $\nu$ (with respect to $p$-renormalization).
}

The following lemma explains why we speak of a little Mandelbrot set.

\begin{lemma}[Self-similarity of renormalization]
\label{Lem:fractal}
Let $\nu$ and $ \{\nu_k\}_{k\ge 1}$ be renormalizable kneading sequences, all with the same base sequence $\mu$ of period $p$. Let $\eta$ and $\{\eta_k\}_{k\ge 1}$ be the corresponding de-renormalized sequences and let $K$ and $\{K_k\}_{k\ge 1}$ be the corresponding little Hubbard trees. Then we have the following:
\begin{enumerate}
\item \label{fractal} $\rho_p(K)=\Hub(\eta)$\;;
\item \label{dorder} $\nu_1 \prec \nu_2$ if and only if $\eta_1 \prec \eta_2$\;;
\item \label{dconv} $\nu_k \to \nu$ if and only if $\eta_k \to \eta$.
\end{enumerate}
\end{lemma}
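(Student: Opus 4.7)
The plan is to first establish part \eqref{fractal} by showing that the restriction of $\sigma^p$ to the little Hubbard tree $K$ is topologically conjugate, via the natural projection $\rho_p$, to the shift $\sigma$ on $\Hub(\eta)$; parts \eqref{dorder} and \eqref{dconv} will then follow from this self-similarity by combining it with the Inclusion Theorem and the definition of the topology on $\K$ from Section~\ref{Sec:parameter}.

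For \eqref{fractal}, I would argue as follows. By Lemma~\ref{Lem:LittleHubbardTree}, every itinerary $s\in K\cap X_\nu$ has the property that its entries at positions not divisible by $p$ are predetermined by the dynamical sequence $\mu_\circ$, while the entries at positions $p, 2p, \dots$ are free. Hence $\rho_p$ is a well-defined injective map from $K\cap X_\nu$ into $\{\0,\1,\star\}^\infty$, and a direct calculation gives $\rho_p\circ\sigma^p=\sigma\circ\rho_p$ on this set. One extends $\rho_p$ to Fatou intervals by collapsing the $\mu$-cycle and keeping only those Fatou intervals that survive (i.e.\ those abutting an endpoint whose itinerary lies in $X_\eta$). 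The image $\rho_p(K)$ is then a connected, $\sigma$-invariant subset of $X_\eta\cup F_\eta$ containing $\eta=\rho_p(\nu)$ and $\star\eta=\rho_p(\sigma^{p-1}\nu)$ as its critical value and critical point. The seven axioms \eqref{item:tree}--\eqref{Item:Hubcrit} of Theorem~\ref{Thm:Hubbard_tree} carry over one by one from the corresponding properties of $K$ inside $\Hub(\nu)$: endpoints of $K$ are precisely the points $\sigma^{kp}\nu$, which map to $\sigma^k\eta$ under $\rho_p$; injectivity of $\sigma^p$ on any subset of $K$ not containing $\star\nu$ becomes injectivity of $\sigma$ on any subset of $\rho_p(K)$ not containing $\star\eta$; and density of precritical points on the critical path transfers because $\rho_p$ sends the precritical points of $\nu$ on $[\star\nu,\nu]\cap K$ bijectively to the precritical points of $\eta$ on $[\star\eta,\eta]$. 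Uniqueness in Theorem~\ref{Thm:Hubbard_tree} then forces $\rho_p(K)=\Hub(\eta)$.

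Statement \eqref{dorder} follows by applying \eqref{fractal} to both $\nu_1,\nu_2$ and using that $\mu\prec\nu_1,\nu_2$ by Lemma~\ref{Lem:lilmandel}, so both little Hubbard trees $K_1,K_2$ are constructed with respect to the same base $\mu$. If $\nu_1\prec\nu_2$, the Inclusion Theorem gives a characteristic copy of (the relevant representative of) $\nu_1$ in $\Hub(\nu_2)$; this copy must lie in the little Hubbard tree $K_2$ because everything outside $K_2$ consists of $\mu$-orbit pieces of period at most $p-1$, whereas $\nu_1$ has its own renormalization structure of period $p$. Applying $\rho_p$ and using the conjugation established above, the characteristic point with itinerary $\eta_1$ appears inside $\Hub(\eta_2)=\rho_p(K_2)$, giving $\eta_1\prec\eta_2$. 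The converse direction reverses this: a characteristic copy of $\eta_1$ in $\Hub(\eta_2)$ lifts under $\rho_p^{-1}$ to a characteristic copy of $\nu_1$ inside $K_2\subset\Hub(\nu_2)$.

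For \eqref{dconv}, the relation between $\Diff$ on $\K$ and $\rho_p$ is explicit: since $\nu$ and $\nu_k$ agree on all positions not divisible by $p$ (both projections $\pi_\e$ being constrained by $\mu_\circ$), we have $\Diff(\nu_k,\nu)\ge pm-1$ if and only if $\Diff(\eta_k,\eta)\ge m$. The neighborhood bases defined in Section~\ref{Sec:parameter} then immediately give the equivalence of $\nu_k\to\nu$ and $\eta_k\to\eta$. The main obstacle in the whole proof is the careful verification of \eqref{fractal}, in particular checking that Fatou intervals transfer correctly and that the two possibilities $\mu_\circ=\up\mu$ and $\mu_\circ=\lo\mu$ arising in Lemma~\ref{Lem:LittleHubbardTree} can be treated uniformly; once \eqref{fractal} is in place, parts \eqref{dorder} and \eqref{dconv} are essentially bookkeeping.
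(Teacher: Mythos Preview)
Your overall strategy coincides with the paper's: part \eqref{fractal} is the key observation (all itineraries in $K$ agree except at positions divisible by $p$, so $\rho_p$ conjugates $(K,\sigma^p)$ with $(\Hub(\eta),\sigma)$), part \eqref{dconv} is trivial, and part \eqref{dorder} is deduced by transferring the characteristic property through $\rho_p$. Your treatment of \eqref{fractal} is considerably more detailed than the paper's one-line remark, but this is not a deviation in approach, just in level of explicitness.

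There is, however, a genuine gap in the converse direction of \eqref{dorder}. You write that a characteristic copy of $\eta_1$ in $\Hub(\eta_2)$ ``lifts under $\rho_p^{-1}$ to a characteristic copy of $\nu_1$ inside $K_2\subset\Hub(\nu_2)$''. What the lift actually gives you is that $\nu_1$ is characteristic \emph{for the $\sigma^p$-dynamics on $K_2$}: no iterate $\sigma^{kp}(\nu_1)$ lies behind $\nu_1$. But $\nu_1\prec\nu_2$ requires $\nu_1$ to be characteristic for the full $\sigma$-dynamics on $\Hub(\nu_2)$, i.e.\ you must also rule out that some $\sigma^m(\nu_1)$ with $m$ \emph{not} a multiple of $p$ lies behind $\nu_1$. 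This does not come for free from the conjugacy. The paper closes this gap by invoking Lemma~\ref{Lem:lht_cap}: the images $\sigma^t(K_2)$ for $t\in\{1,\dots,p-1\}$ meet $K_2$ in at most one point (an image of $\mu_\circ$), so $\sigma^m(\nu_1)\in\sigma^m(K_2)$ cannot lie strictly behind $\nu_1\in K_2$. You should insert this step.

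A minor point: you cite Lemma~\ref{Lem:lilmandel} for $\mu\prec\nu_1,\nu_2$, but that lemma \emph{assumes} this relation rather than proving it; the correct reference is Lemma~\ref{Lem:LittleHubbardTree}, which establishes that the dynamical sequence $\mu_\circ$ is characteristic in $\Hub(\nu)$.
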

\begin{proof}
\eqref{dconv} is trivial and \eqref{fractal} follows directly from the fact that all itineraries in $K$ coincide except at every $p$-th entry. For \eqref{dorder}, if $\nu_1 \prec \nu_2$, then $K_1$ can be found inside $K_2$ and so $\Hub(\eta_1)$ is found inside $\Hub(\eta_2)$ with $\eta_1$ being characteristic. Conversely, if $\eta_1 \prec \eta_2$, then $\nu_1 \in K_2$ and no iterated $\sigma^p$-image of $\nu_1$ maps behind it. Since the images of the little Hubbard tree intersect only at images of $\mu$ or not at all, no other iterates of $\nu_1$ map behind it either, thus $\nu_1$ is characteristic in $\Hub(\nu_2)$. 
\end{proof}

Parts of the following results are mild extensions and improvements of \cite[Lem\-ma~6.20]{DimaEntropy}, showing that entropy is constant in little Mandelbrot sets of positive entropy. Note that here, unlike in \cite{DimaEntropy}, we include kneading sequences that are not complex admissible, so our ``little Mandelbrot sets'' are not necessarily subsets of the standard Mandelbrot set (not even in a combinatorial sense).

\begin{lemma}[Entropy, lower bound for periodic sequence]
\label{Lem:periodic_bound}
If $\nu$ is $\star$-periodic with some period $p$ and $h(\nu)>0$, then $h(\nu)\ge \log 2/(p-1)$ and $N(n)\ge 2^{[n/(p-1)]}$ for infinitely many $n$ (even for all $n$ if $\nu$ is not renormalizable).
\end{lemma}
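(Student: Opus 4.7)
The plan is to establish an exponential lower bound on the number of precritical points on the critical path $C=[\star\nu,\nu]$ by exhibiting a doubling every $p-1$ iterations. The structural role of $\star$-periodicity, through the identity $\sigma^{p-1}(\nu)=\star\nu$, is the engine of the argument.

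The first step is to show that if $h(\nu)>0$, then there exists a precritical point $q_1$ in the interior of $C$ of depth $d_1\le p-1$. I would iterate $C$ under $\sigma$: as long as no prior iterate contains $\star\nu$ in its interior, $\sigma^k(C)$ is just the tree-geodesic $[\sigma^{k-1}\nu,\sigma^k\nu]$. At iteration $p-1$ the endpoint $\sigma^{p-1}\nu=\star\nu$ sits at an endpoint of $[\sigma^{p-2}\nu,\star\nu]$, not in its interior, and one more iterate then returns $C$ to itself with $\sigma^p|_C$ fixing both endpoints, hence equal to the identity on this tree-interval. But then the recursive construction of precritical points on $C$ never produces interior points, so $N(n)\le 2$ for all $n$ and $h(\nu)=0$, contradicting the hypothesis. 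Therefore at some iteration $k_0\le p-2$ the critical point must first appear in the interior, yielding $q_1$ of depth $d_1=k_0+1\le p-1$.

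The second step is the doubling mechanism. Once $q_1$ is in place, the two subintervals $[\star\nu,q_1]$ and $[q_1,\nu]$ are each mapped by $\sigma^{d_1}$ onto an interval of $\Hub(\nu)$ with $\nu$ as one endpoint --- structurally like the critical path itself. I would apply the same periodicity argument to these image intervals: after at most $p-1$ further iterations each acquires a new interior precritical point, unless the image interval gets caught in a periodic cycle. Iterating this doubling $k$ times yields $\ge 2^k$ interior precritical points on $C$ of depth at most $d_1+k(p-1)\le(k+1)(p-1)$, giving $N((k+1)(p-1))\ge 2^k$. Evaluating at $n=(k+1)(p-1)$ gives $N(n)\ge 2^{\lfloor n/(p-1)\rfloor}$ for infinitely many $n$, from which $h(\nu)\ge\log 2/(p-1)$ follows by taking $\limsup$ in the definition of core entropy.

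The stronger conclusion that $N(n)\ge 2^{\lfloor n/(p-1)\rfloor}$ holds for \emph{all} sufficiently large $n$ in the non-renormalizable case would use Lemma~\ref{Lem:renormalizable_periodic}: uniform expansion guarantees that every interval covers the tree in uniformly bounded time, so the doubling proceeds at every scale and no subinterval is ever trapped. The hard part will be making the doubling rigorous, in particular certifying that the image intervals under $\sigma^{d_1}$ admit the same ``first interior precritical point within $p-1$ iterations'' dichotomy as $C$ itself. For renormalizable $\nu$ a subinterval of $C$ can behave like a renormalized critical path with its own longer internal periodicity, which delays the generation of new precritical points and is precisely why the ``for all $n$'' version breaks down in the renormalizable setting.
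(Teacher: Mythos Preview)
Your Step~1 is essentially correct, though the clause ``hence equal to the identity on this tree-interval'' is unnecessary and not obviously true; what you actually need, and what your argument does give, is that $\sigma^p$ maps $C$ bijectively to itself fixing both endpoints, so by periodicity no iterate of $C$ ever contains $\star\nu$ in its interior, forcing $N(n)=0$ for $n\ge 2$ and $h(\nu)=0$.

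Step~2 is where the argument has a real gap. You want each of the $2^k$ subintervals after $k$ rounds to split again within $p-1$ further iterates, but your own caveat---``unless the image interval gets caught in a periodic cycle''---already signals that this can fail. After $\sigma^{d_1}$, each half of $C$ becomes an interval $[\nu,\nu^j]$ whose endpoints are both $\sigma^p$-fixed, and such an interval may well satisfy $\sigma^p([\nu,\nu^j])=[\nu,\nu^j]$ homeomorphically. You suggest this forces renormalizability, but Lemma~\ref{Lem:renorm} explicitly excludes the case where $\nu$ is $\star$-periodic of period dividing $p$, which is exactly our situation. So even in the non-renormalizable case you have not shown that \emph{both} halves split; if only one does at each stage you get linear, not exponential, growth. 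A second issue: even granting the doubling, your count produces $2^k$ precritical points of depth \emph{at most} $(k+1)(p-1)$, spread over various depths. This suffices for the entropy bound $h(\nu)\ge\log 2/(p-1)$ via $\limsup$, but not for the pointwise claim $N(n)\ge 2^{[n/(p-1)]}$.

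The paper takes a quite different route. It works with the fixed point $\alpha$ and its other preimage $-\alpha$. If $-\alpha\in\Hub(\nu)$, then $-\alpha\in[\alpha,\nu^j]$ for some $j\le p-2$; since $\sigma$ sends $[\alpha,-\alpha]$ two-to-one onto $[\alpha,\nu]$ and $\sigma^j$ pushes $[\alpha,\nu]$ over $[\alpha,\nu^j]\supset[\alpha,-\alpha]$, one has a genuine $2$-horseshoe for $\sigma^{j+1}$, which gives the entropy and the $N(n)$ bound directly. If $-\alpha\notin\Hub(\nu)$, the branches at $\alpha$ are permuted cyclically, so $\nu$ is renormalizable, and one reduces to the de-renormalized sequence. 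The horseshoe mechanism avoids all the bookkeeping of subinterval depths and sidesteps the trapped-subinterval problem entirely.
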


\begin{remark}
It is well known that the $\star$-periodic kneading sequences with zero entropy are exactly the ones constructed by repeated bifurcations of the trivial kneading sequence (see for example \cite[Proposition~2.11]{HenkBiaccDim}). 
\end{remark}

\begin{proof}
First suppose that the point $-\alpha$ is contained in the Hubbard tree $T:=T(\nu)$, so there is a postcritical point $\nu_j$ with $-\alpha\in[\alpha,\nu^j]$. Since $\nu$ has period $p$, we clearly have $j\le p-2$.

Then $\sigma$ sends $[\alpha,-\alpha]$ two-to-one to $[\alpha,\nu]$, and $\sigma^{j}$ sends $[\alpha,\nu]$ to its image that must contain $[\alpha,\nu^j]$, homeomorphic or not. Thus $\sigma^{j+1}$ has a complete $2$-horseshoe on $[\alpha,-\alpha]$, which implies $h(\sigma^{j+1})\ge \log 2$ and thus $h(\sigma)\ge \log 2/(j+1)\ge\log 2/(p-1)$. We even obtain $N(n)\ge 2^{[n/j]}>2^{[n/(p-1)]}$.

Now suppose $-\alpha \not\in \Hub(\nu)$. Let $b$ denote the number of edges branching off at $\alpha$  and let $K_0$ be the connected component of $\Hub(\nu)\setminus \alpha$ containing the critical point. When iterating $K_0$, we can only land in one component branching off at $\alpha$ because otherwise we would have found a preimage of $\alpha$ different from $\alpha$. Hence, the iterates of  $K_0$ cycle along the branches and $\sigma^b(K_0)\subseteq K_0$. Thus, $\nu$ is renormalizable. If $\nu$ is a bifurcation of a sequence $\nu'$, replace $\nu$ with $\nu'$. It is known that bifurcations have the same entropy, so the bounds only get better for $\nu$ if we show them for $\nu'$. Since $\nu$ has positive entropy, repeating this process does not terminate at the trivial kneading sequence. 

Choose $q$ maximal such that $\nu$ is $q$-renormalizable. Let $\eta$ be the de-renormalized sequence of period $q':=p/q$. Let $N'(n)$ count the precritical points of depth $n$ on the critical path of $\eta$. By Lemma~\ref{Lem:fractal}\eqref{fractal} we know $N(qn)\ge N'(n)$. Since $q$ was chosen maximal, the sequence $\eta$ is not renormalizable. The internal address of $\eta$ is not of the form $1\to b$ either because we assumed that $\nu$ is not a bifurcation. Hence, $\eta$ has positive entropy  (see remark above) and by the first part of the proof, $N'(n)\ge 2^{[n/(q'-1)]}$. Hence, $N(n)\ge 2^{[n/(q(q'-1))]}\ge 2^{[n/(p-1)]}$. 
\end{proof}

\begin{lemma}[Entropy on $\LM(\mu)$ when $h(\mu)=0$]
\label{Lem:lilmandel_entropy_zero}
Let $\nu$ be $p$-renormalizable with positive core entropy but such that the base sequence $\mu$ has entropy zero. Let $\eta$ be the associated de-renormalized sequence and assume $\eta$ is non-renormalizable and tree-finite. Then $h(\nu)=h(\eta)/p$ and for some growth constant $C>0$
\begin{equation}
\label{ineq:denorm}
N(n)\le C e^{(h(\eta)/p)n}.
\end{equation}
The  constant $C$ only depends on $h(\eta)$, $p$ and an analogous growth constant of $N_\eta$. 
\end{lemma}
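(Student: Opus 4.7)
The plan is to exploit the conjugacy from Lemma~\ref{Lem:fractal}\eqref{fractal} between $\sigma^p|_K$ and $\sigma|_{\Hub(\eta)}$ in order to transfer the entropy estimates from $\eta$ to $\nu$, while controlling the ``non-renormalized'' contributions from $\mu$ through its vanishing entropy. First I would check that $\Hub(\nu)$ is a finite tree: since $\eta$ is tree-finite the little Hubbard tree $K$ has $\kappa(\eta)$ endpoints by Lemma~\ref{Lem:fractal}\eqref{fractal}, and together with its $\sigma$-iterates $K,\sigma(K),\ldots,\sigma^{p-1}(K)$ and the finitely many further branches arising from the finite Hubbard tree of the $\star$-periodic base $\mu$ via Theorem~\ref{Thm:InclusionTrees}, we get $\kappa(\nu)<\infty$. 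Hence Lemma~\ref{Lem:FiniteTrees} provides $h(\nu)=h_H(\nu)$, which lets us work with the transition matrix of $\sigma$ on the edges of $\Hub(\nu)$.

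For the lower bound $h(\nu)\ge h(\eta)/p$, I would push forward precritical points of $\eta$: each precritical point of $\eta$ at depth $m$ on $\Hub(\eta)$ lifts via $\rho_p^{-1}$ to a precritical point of $\nu$ at depth $pm$ in $K$, and the part of the critical path of $\eta$ corresponds to a subinterval of $K\cap[\star\nu,\nu]=[\mu_\circ,\nu]$, giving $N(pm)\ge c\cdot N_\eta(m)$ for a constant $c>0$ and therefore $h(\nu)\ge h(\eta)/p$.

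For the upper bound and the quantitative estimate, I would bring in the transition matrix $A$ of $\sigma$ on the edges of $\Hub(\nu)$. Since the cycle $\bigcup_{i=0}^{p-1}\sigma^i(K)$ is $\sigma$-invariant, $A$ takes block-triangular form $\begin{pmatrix}A_K & *\\ 0 & A_\mu\end{pmatrix}$, where $A_K$ records transitions among edges inside the cycle and $A_\mu$ records those among edges outside. By Lemma~\ref{Lem:fractal}\eqref{fractal}, $A_K^p$ is conjugate to the transition matrix of $\eta$; since $\eta$ is non-renormalizable and tree-finite, Lemma~\ref{Lem:renormalizable_nonperiodic} or Lemma~\ref{Lem:renormalizable_periodic} provides uniform expansion and hence primitivity, so $A_K$ has simple leading eigenvalue $e^{h(\eta)/p}$. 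The block $A_\mu$ is inherited from the dynamics of $\Hub(\mu)$ via Theorem~\ref{Thm:InclusionTrees}, so its spectral radius is at most $e^{h(\mu)}=1<e^{h(\eta)/p}$. Consequently the leading eigenvalue of $A$ is $e^{h(\eta)/p}$ with a spectral gap, and Jordan normal form yields $(A^n)_{ij}\le C\,e^{(h(\eta)/p)n}$; since $N(n)$ is bounded by the entries of $A^n$ in the column of the edge adjacent to $\star\nu$, we obtain the claimed bound, with $C$ depending only on $h(\eta)$, $p$ and the growth constant of $N_\eta$ supplied by Lemma~\ref{Lem:UpperBoundEnya}.

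The hard part will be the precise identification of $A_\mu$ with the transition matrix of the dynamics on $\Hub(\mu)$ and the verification that $\rho(A_\mu)\le 1$: this amounts to checking edge-by-edge, using Theorem~\ref{Thm:InclusionTrees}, that edges outside the renormalization cycle either map within themselves with entropy-zero dynamics inherited from $\mu$ or escape into $A_K$. A secondary but important point is to rule out any Jordan blocks at the leading eigenvalue, which would spoil the constant $C$; this will follow from the spectral gap between $A_K$ and $A_\mu$ together with the primitivity of $A_K$ guaranteed by the uniform expansion of $\eta$.
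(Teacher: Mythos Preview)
Your approach via the block-triangular transition matrix is correct in outline and genuinely different from the paper's argument. The paper avoids spectral theory entirely: it decomposes each precritical orbit on the critical path of $\nu$ into two phases --- an initial segment that agrees with a precritical point of $\mu$ until it first lands in $[\mu,\nu]$, followed by a terminal segment inside $I\cup K$ where $I:=[\mu,\nu]\setminus K$ --- and bounds each phase separately. For the terminal phase it uses that $\sigma^p$ is injective and expanding on $I$ with $\sigma^p(I)\subset I\cup K$, so that $N_*(n)\le \sum_{j\ge 0} N_K(n-jp)$; for the initial phase it exploits $h(\mu)=0$ to get $N_\mu(n)\le C_\mu e^{(h(\eta)/2p)n}$, and the convolution of the two bounds is a convergent geometric series. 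This is elementary and, crucially, makes the dependence of $C$ on $h(\eta)$, $p$ and the growth constant of $N_\eta$ completely explicit.

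Your route recovers $h(\nu)=h(\eta)/p$ cleanly, but there is a gap at the last step. The Jordan-form constant you obtain depends a priori on the full matrix $A$ --- its size, the off-diagonal block, and the distance of the sub-leading eigenvalues from $e^{h(\eta)/p}$ --- not merely on $h(\eta)$, $p$ and the growth constant of $N_\eta$. This matters, because the lemma is invoked in Theorem~\ref{Thm:Hoelder_low} uniformly over a family of renormalizable sequences $\tau'$ sharing the base $\mu$ but with varying de-renormalized sequence. Also note that your block $A_\mu$ is not literally the transition matrix of $\Hub(\mu)$: the complement of the $K$-cycle in $\Hub(\nu)$ contains, besides $\Hub_{\mu}(\nu)$, the iterates $\sigma^i(I)$ of the connecting arc $I=[\mu,\nu]\setminus K$, and you must argue separately that those edges contribute spectral radius at most $1$ (they do, since $\sigma^p|_I$ is injective). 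Tracking these points carefully would close the gap, but at that stage the bookkeeping is comparable to the paper's direct combinatorial count.
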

\begin{proof}
Let $N_K(n)$ count the precritical points on the little Hubbard tree $K$ and let $N'(n)$ count the precritical points on $\Hub(\eta)$. Then we have $N_K(pn)=N'(n)$ and $N_K(m)=0$ if $m$ is not a multiple of $p$. Since $\eta$ is tree-finite, so is $\nu$ and thus $h^H(\nu)=h(\nu)\ge h(\eta)/p$. Since $\eta$ is also non-renormalizable, a bound for $N_K(n)$ analogous to \eqref{ineq:denorm} holds by Lemmas~ \ref{Lem:UpperBoundEnya}, \ref{Lem:renormalizable_nonperiodic} and \ref{Lem:renormalizable_periodic}.

Set $I=[\mu, \nu]\setminus K$. Then the interval $I$ is expanding under $\sigma^p$ and $\sigma^p I \subseteq I\cup K$. Let $N_*(n)$ count the precritical points on $[\mu, \nu]$. Then we find
\[
N_*(n)\le N_K(n)+N_K(n-p)+N_K(n-2p)+\dots,
\]
and thus \eqref{ineq:denorm} holds for $N_*(n)$ and a modified constant, say $C_*$. Since $h(\mu)=0$, we have a trivial bound of the form $N_{\mu}(n)\le C_\mu e^{(h(\eta)/2p)n}$ for some constant $C_\mu$. Any precritical point on the critical path of $\nu$ will coincide with a precritical point of $\mu$ until they land in $[\mu, \nu]$, hence
\begin{align*}
N(n) &\le \sum_{k=0}^n 2N_{\mu}(k)N_*(n-k) \le \sum_{k=0}^n 2C_\mu C_*e^{(h(\eta)/2p)k}e^{(h(\eta)/p)(n-k)} \\
&< \left( 2C_\mu C_* \sum_{k=0}^\infty e^{-(h(\eta)/2p)k} \right) e^{(h(\eta)/p)n}=: C e^{(h(\eta)/p)n}. 
\end{align*}
Since we have already established the lower bound, this implies $h(\nu)=h(\eta)/p$. 
\end{proof}

\begin{lemma}[Entropy on $\LM(\mu)$ when $h(\mu)>0$]
\label{Lem:lilmandel_entropy}
Suppose a base sequence $\mu$ has positive entropy. Then entropy of all kneading sequences within $\LM(\mu)$ is constant and coincides with $h(\mu)$.

Moreover, for all $\nu\in \LM(\mu)$ we have the upper bound
\[
N_\nu^H(n)\le C_\mu e^{h(\mu)n}
\]
where the constant $C_\mu$ only depends on $\mu$.
\end{lemma}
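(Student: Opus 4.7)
The approach is to mimic the proof of Lemma~\ref{Lem:lilmandel_entropy_zero} (the $h(\mu)=0$ case), the crucial new input being the numerical inequality $h(\mu)>\log 2/p$. This inequality is automatic whenever $h(\mu)>0$, because Lemma~\ref{Lem:periodic_bound} gives $h(\mu)\ge \log 2/(p-1)>\log 2/p$. Informally, the renormalization contribution grows at rate at most $\log 2/p$ and so cannot compete with the base contribution at rate $h(\mu)$. The lower bound $h(\nu)\ge h(\mu)$ is immediate: $\nu\in\LM(\mu)$ forces $\mu$ (or an entropy-preserving bifurcation of $\mu$) to appear as a characteristic sequence in $\Hub(\nu)$ by Lemma~\ref{Lem:LittleHubbardTree}, and Lemma~\ref{Lem:entropybasics}(3) applies.

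For the upper bound, let $K$ be the little Hubbard tree of $\nu$ and let $\eta$ be the corresponding de-renormalized sequence. By Lemma~\ref{Lem:fractal}\eqref{fractal}, $\rho_p$ identifies $K$ with $\Hub(\eta)$, so $N_K(pm)=N_\eta^H(m)\le 2^m$ and $N_K(\ell)=0$ for $p\nmid \ell$; hence $N_K(m)\le e^{(\log 2/p)m}$ uniformly in the choice of $\eta$. As in Lemma~\ref{Lem:lilmandel_entropy_zero}, the $\sigma^p$-expansivity of $[\mu,\nu]\setminus K$ yields $N_*(n)\le \sum_{k\ge 0}N_K(n-kp)\le C_1 e^{(\log 2/p)n}$ for precritical points on $[\mu,\nu]$. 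Since $\mu$ is $\star$-periodic and hence tree-finite, a Perron--Frobenius analysis of its transition matrix (as in Lemma~\ref{Lem:LowerBoundEnya}) gives a bound $N_\mu^H(k)\le C_\mu^\# e^{h(\mu)k}$ with constant depending only on $\mu$. A precritical point on the critical path of $\nu$ agrees with one on $\Hub(\mu)$ until it first enters $[\mu,\nu]$, so a convolution (with a factor $2$ for the two sides of $\star\nu$) gives
\[
N_\nu(n)\le 2\sum_{k=0}^n N_\mu^H(k)\,N_*(n-k)\le C\, e^{(\log 2/p)n}\sum_{k=0}^n e^{(h(\mu)-\log 2/p)k}\le C_\mu\, e^{h(\mu)n},
\]
the last inequality holding because $h(\mu)-\log 2/p>0$ forces the finite geometric sum to concentrate at its top term.

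To upgrade $N_\nu$ to $N_\nu^H$, I use the structural decomposition $\Hub(\nu)=\Hub(\mu)\cup\bigcup_{i=0}^{p-1}\sigma^i(K)$, with the iterates of $K$ essentially disjoint by Lemma~\ref{Lem:lht_cap}: the $\Hub(\mu)$-skeleton contributes at rate $h(\mu)$ and each of the finitely many copies of $K$ contributes only at the suboptimal rate $\log 2/p<h(\mu)$, so the total still satisfies $N_\nu^H(n)\le C_\mu e^{h(\mu)n}$ with $C_\mu$ depending only on $\mu$. Combined with the lower bound, this yields $h(\nu)=h(\mu)$. The principal technical obstacle is the tree-infinite case (when $\eta$, hence $\nu$, has infinitely many endpoints), where Lemma~\ref{Lem:FiniteTrees} cannot be invoked to transfer a bound from $N_\nu$ to $N_\nu^H$ directly. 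The resolution is the rigidity of renormalization: every endpoint of $\Hub(\nu)$ beyond the finite $\Hub(\mu)$-skeleton lies inside a copy of $K$, where counts grow only at rate $\log 2/p$, so the $\mu$-skeleton governs the exponential asymptotics; all $\nu$-dependence enters only through the trivial bound $N_\eta^H(n)\le 2^n$, which ensures uniformity of $C_\mu$ over $\nu\in\LM(\mu)$.
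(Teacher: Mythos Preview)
Your overall strategy---convolve the base-$\mu$ contribution (rate $h(\mu)$) against the renormalizable contribution (rate $\log 2/p$), and win because $h(\mu)>\log 2/p$ by Lemma~\ref{Lem:periodic_bound}---is exactly the paper's. The difference is in where you run the convolution and how you justify the $N_\mu^H$ bound.

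The paper first reduces $\mu$ to a base sequence that is either non-renormalizable or whose own base sequences all have entropy zero; this is what makes the bound $N_\mu^H(n)\le Ce^{h(\mu)n}$ available via Lemmas~\ref{Lem:UpperBoundEnya}/\ref{Lem:renormalizable_periodic} or Lemma~\ref{Lem:lilmandel_entropy_zero}. Your Perron--Frobenius shortcut is plausible but is not something the paper has established for arbitrary $\star$-periodic $\mu$, so you are implicitly importing an extra lemma. More seriously, the paper runs the convolution \emph{directly on $N_\nu^H$}: it introduces the notion of a \emph{renormalizable} precritical point (one whose itinerary agrees with some $\nu^t$ except at positions $\equiv -t\pmod p$), observes via Lemma~\ref{Lem:lilmandel} that every precritical point outside $\Hub_{\mu}(\nu)$ is renormalizable, and concludes that \emph{every} precritical point on $\Hub(\nu)$ has the form $w_\mu\,e\,w_r\star\nu$ with $w_\mu$ a word of $\Hub(\mu)$ and $w_r\star\nu$ renormalizable. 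One convolution then bounds $N_\nu^H$ in one stroke.

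Your two-step route (bound $N_\nu$, then ``upgrade'' via a geometric decomposition $\Hub(\nu)=\Hub_\mu(\nu)\cup\bigcup_i\sigma^i(K)$) has a genuine gap at the upgrade. The assertion ``the $\Hub(\mu)$-skeleton contributes at rate $h(\mu)$'' is not correct as a statement about precritical points of $\nu$ lying in $\Hub_\mu(\nu)$: such a point may well leave $\Hub_\mu(\nu)$ under iteration (Theorem~\ref{Thm:InclusionTrees}\eqref{item:it_subtree} gives $\sigma(\Hub_\mu(\nu))=\Hub_\mu(\nu)\cup[\mu,\nu]$), wander through the copies of $K$, and only then hit $\nu$. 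So the count on the skeleton is \emph{not} $N_\mu^H$ but already involves the convolution you did for the critical path; the geometric splitting of the tree does not yield an additive splitting of $N_\nu^H$. The fix is precisely the paper's move: decompose \emph{itineraries} rather than the tree, noting that once a precritical orbit exits $\Hub_\mu(\nu)$ it is renormalizable (Lemma~\ref{Lem:lilmandel}), and run the single convolution for $N_\nu^H$. With that change your argument and the paper's coincide.
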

\begin{proof}
If the base sequence $\mu$ is renormalizable such that the base sequence of this renormalization, say $\mu'$, also has positive entropy, then replace $\mu$ by $\mu'$. Repeat this argument as long as possible; it must terminate because the periods decrease in every step. Therefore, we may assume that either $\mu$ is non-renormalizable, or that $\mu$ is renormalizable and none of its base sequences have positive entropy.

Let $p$ be the period of $\mu$. We call a precritical point $\rho\in\Hub(\nu)$ \emph{renormalizable} if there exists $t\in \{0,...,p-1\}$ such that the sequences of $\rho$ and $\nu^t$ coincide except at positions $kp-t$ for $k\in\N$. Denoting the number of renormalizable precritical points of depth $n$ on $\Hub(\nu)$ by $N_r^H(n)$, we thus have $N_r^H(n)\le p 2^{n/p}$.  

If $\mu$ is non-renormalizable, then $\mu$ is uniformly expanding by Lemma~\ref{Lem:renormalizable_periodic}. Thus, the upper bound $N_{\mu}^H(n)\le C e^{h(\mu) n}$ follows from Lemma \ref{Lem:UpperBoundEnya}. In the other case, we obtain the same bound from Lemma~\ref{Lem:lilmandel_entropy_zero} instead (choosing the maximal period of renormalization for $\mu$ ensures that the de-renormalized sequence is itself not renormalizable). 

Now let $\rho$ be any precritical point on $\Hub(\nu)$. If $\rho$ never leaves the subtree spanned by the orbit of the dynamical sequence $\mu_\circ$, it corresponds to a precritical point of $\Hub(\mu)$ by Theorem~\ref{Thm:InclusionTrees}. On the other hand, Lemma~\ref{Lem:lilmandel} implies that all precritical points outside this subtree are renormalizable. Similarly as in the proof of Theorem~\ref{Thm:InclusionTrees}~\eqref{item:combinatorics}, we thus have that every precritical point on $\Hub(\nu)$ is of the form $w_\mu e w_r\star\nu$, where $w_\mu$ is a word of a precritical point of $\Hub(\mu)$, the symbol $e\in\{\0,\1\}$, and $w_r\star\nu$ is renormalizable. Organizing the count by the number $k=|w_r|+1$ such that the precritical point becomes renormalizable after $n-k$ iterations, we can compute

\begin{align*}
N_\nu^H(n) &\le \sum_{k=0}^n 2N^H_\mu(n-k)N^H_r(k) \le \sum_{k=0}^n 2C e^{h(\mu)(n-k)}p e^{(\log 2/p)k}\\&< \left(2Cp \sum_{k=0}^\infty e^{-(h(\mu)-\log 2/p)k}\right)e^{h(\mu)n} =: C_\mu e^{h(\mu)n} 
\;.
\end{align*}
The geometric series converges by Lemma~\ref{Lem:periodic_bound}, so we obtain the desired upper bound. This implies $h(\nu)\le h(\mu)$, while monotonicity of entropy (as consequence of Theorem~\ref{Thm:InclusionTrees}) yields $h(\mu)\le h(\nu)$, so we have equality.
\end{proof}

In particular, we have shown the following.

\begin{corollary}[Entropy identity]
Let $\nu$ be renormalizable with base sequence $\mu$ of period $p$ and de-renormalized sequence $\eta$. Then
\[
h(\nu)=\max(h(\mu), h(\eta)/p).
\]
\end{corollary}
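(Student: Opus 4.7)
The plan is to derive the identity directly from the two preceding lemmas by splitting into cases according to whether $h(\mu)=0$ or $h(\mu)>0$, using in each case that $\mu \prec \nu$ and that the little Hubbard tree $K$ corresponds to $\Hub(\eta)$ via the map $\rho_p$ of Lemma~\ref{Lem:fractal}\eqref{fractal}. The monotonicity bound $h(\mu) \le h(\nu)$ holds automatically from Lemma~\ref{Lem:entropybasics}, and $h(\eta)/p \le h(\nu)$ follows because precritical points of $\eta$ of depth $n$ correspond bijectively under $\rho_p$ to precritical points of $K$ of depth $pn$, and these are in particular precritical points of $\Hub(\nu)$. So $\max(h(\mu), h(\eta)/p) \le h(\nu)$ is automatic, and the work is in the matching upper bound.

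In the case $h(\mu) > 0$, Lemma~\ref{Lem:lilmandel_entropy} supplies $h(\nu) = h(\mu)$ as well as the explicit upper bound $N_\nu^H(m) \le C_\mu e^{h(\mu) m}$. It remains only to check that the other entry of the max is at most this value. Plugging $m = pn$ into the bound and using $N'(n) = N_K(pn) \le N_\nu^H(pn)$ from the depth-preserving correspondence, I get $h(\eta) \le p\,h(\mu)$, so indeed $\max(h(\mu), h(\eta)/p) = h(\mu) = h(\nu)$.

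In the case $h(\mu) = 0$, I appeal directly to Lemma~\ref{Lem:lilmandel_entropy_zero}, which gives $h(\nu) = h(\eta)/p$ under its standing hypotheses that $\eta$ is non-renormalizable and tree-finite (the assumptions in which the renormalizable part of the paper operates). Since $0 = h(\mu) \le h(\eta)/p$, this again gives $\max(h(\mu), h(\eta)/p) = h(\eta)/p = h(\nu)$. If $\eta$ itself is renormalizable, the cleanest move is to induct on the depth of successive renormalizations: the composition of $p$-renormalization of $\nu$ with $p'$-renormalization of $\eta$ exhibits $\nu$ as $pp'$-renormalizable with a deeper base sequence, and one recovers the identity by applying the inductive hypothesis to $\eta$ and combining with the already-treated cases.

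The main point to get right is the depth bookkeeping in the correspondence $\rho_p\colon K \to \Hub(\eta)$: because renormalizability of $\nu$ forces all entries of $\nu$ not at positions divisible by $p$ to be pinned down by $\mu$, the only ``free'' entries are those at multiples of $p$, and a precritical point at depth $n$ in $\Hub(\eta)$ is exactly a precritical point at depth $pn$ in $K\subset\Hub(\nu)$. Once this is clear, the two upper bounds from Lemmas~\ref{Lem:lilmandel_entropy} and~\ref{Lem:lilmandel_entropy_zero} combine mechanically with the trivial lower bounds to yield the stated equality.
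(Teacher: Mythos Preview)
Your approach matches the paper's: the corollary is stated immediately after Lemmas~\ref{Lem:lilmandel_entropy_zero} and~\ref{Lem:lilmandel_entropy} with the phrase ``In particular, we have shown the following'' and no further proof, so the intended argument is precisely your case split on $h(\mu)$. Your handling of both cases is correct, and you are right to flag that Lemma~\ref{Lem:lilmandel_entropy_zero} carries the extra hypotheses that $\eta$ be non-renormalizable and tree-finite --- the paper is tacitly working under those assumptions here, and your induction sketch for renormalizable $\eta$ is the natural way to extend beyond them.
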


\begin{theorem}[H\"older continuity, renormalizable case]
\label{Thm:HoelderRenorm}
Let $\nu$ be renormalizable so that one of its base sequence has positive core entropy. Then there is a neighborhood $U$ of $\nu$ such that all $\nu'\in U$ satisfy 
\begin{equation*}
|h(\nu)-h(\nu')|\le C e^{-h(\nu) k}
\end{equation*}
where $k=\Diff(\nu,\nu')$, and $C>0$ is a constant that depends only on $\nu$. \newline
\end{theorem}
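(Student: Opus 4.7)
The strategy is to reduce the renormalizable case to Lemma~\ref{Lem:estimate_from_above} applied at a $\star$-periodic common ancestor $\mu^\dag$ lying in the little Mandelbrot set $\LM(\mu)$ of a base sequence $\mu$ of $\nu$ with positive entropy. The key reason this works is that Lemma~\ref{Lem:lilmandel_entropy} provides a uniform growth bound on $N^H$ throughout $\LM(\mu)\cup\{\mu\}$ with a common constant $C_\mu$ and the \emph{exact} exponent $h(\mu)=h(\nu)$; this is precisely the hypothesis that Lemma~\ref{Lem:estimate_from_above} demands, and it allows us to avoid the $\eps$-loss that complicated the non-renormalizable case.

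First I would fix such a $\mu$: by hypothesis some base sequence of $\nu$ has positive entropy, and if \emph{its} own base sequence still has positive entropy I descend further; this terminates because periods strictly decrease. Let $p$ be the resulting period. Lemma~\ref{Lem:lilmandel_entropy} (including the part of its proof that establishes the bound for $\mu$ itself) then yields $h(\tilde\nu)=h(\mu)=:h$ and $N^H_{\tilde\nu}(n)\le C_\mu e^{hn}$ for every $\tilde\nu\in\LM(\mu)\cup\{\mu\}$; in particular $h(\nu)=h$. Next I would shrink $U$ so that $k:=\Diff(\nu,\nu')>p$ for every $\nu'\in U$, and invoke Theorem~\ref{Thm:branch_weak} to find a $\star$-periodic $\mu^\dag$ with $\mu^\dag\preceq\nu$, $\mu^\dag\preceq\nu'$ and $\Diff(\mu^\dag,\nu),\Diff(\mu^\dag,\nu')\ge k$.

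The crucial step is to verify $\mu\preceq\mu^\dag$. Because $\mu\prec\nu$ with $\mu$ of period $p$, the internal address of the appropriate upper/lower sequence $\chi$ of $\nu$ has the internal address of $\mu$ as an initial segment ending in $p$ (the combinatorial characterization of $\prec$ via internal addresses underlying Lemma~\ref{Lem:int_add}). The corresponding sequence $\chi'$ of $\nu'$ agrees with $\chi$ in at least $k>p$ entries, so the internal address entries of $\chi'$ that are $\le p$ match those of $\chi$; in particular $p$ belongs to the internal address of $\chi'$ as well. Inspecting the construction of $\mu^\dag$ in the proof of Theorem~\ref{Thm:branch_weak}, the internal address of $\mu^\dag$ is the common initial segment of the addresses of $\chi$ and $\chi'$, which therefore begins with that of $\mu$. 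Lemma~\ref{Lem:int_add} now gives $\mu\preceq\mu^\dag$, and combined with $\mu^\dag\preceq\nu$ and $\nu\in\LM(\mu)$, Lemma~\ref{Lem:lilmandel} yields $\mu^\dag\in\LM(\mu)\cup\{\mu\}$. Consequently $h(\mu^\dag)=h$ and $N^H_{\mu^\dag}(n)\le C_\mu e^{hn}$.

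The plan concludes by applying Lemma~\ref{Lem:estimate_from_above} at $\mu^\dag$ to the two comparisons $\nu\succ\mu^\dag$ and $\nu'\succ\mu^\dag$:
\[
0\le h(\nu)-h(\mu^\dag)\le 2C_\mu e^{-h\,\Diff(\mu^\dag,\nu)}\le 2C_\mu e^{-hk},
\]
\[
0\le h(\nu')-h(\mu^\dag)\le 2C_\mu e^{-h\,\Diff(\mu^\dag,\nu')}\le 2C_\mu e^{-hk}.
\]
The first is automatic since $h(\nu)=h(\mu^\dag)$; the second then gives $|h(\nu)-h(\nu')|\le 2C_\mu e^{-hk}$, which is the required bound with $C:=2C_\mu$ and exponent $h=h(\nu)$. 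The main obstacle in this outline is the internal-address step $\mu\preceq\mu^\dag$: the bookkeeping with upper/lower sequences is slightly delicate when $\nu$ or $\nu'$ is itself $\star$-periodic, and one must check that the common prefix of internal addresses really reaches at least up to the entry $p$. Once this is arranged, no further machinery beyond Lemmas~\ref{Lem:int_add}, \ref{Lem:lilmandel} and~\ref{Lem:lilmandel_entropy} together with Lemma~\ref{Lem:estimate_from_above} enters the proof.
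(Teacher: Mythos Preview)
Your proposal follows essentially the same route as the paper: fix a base sequence $\mu$ with positive entropy (descending until either $\mu$ is non-renormalizable or all of its own base sequences have zero entropy), use the Weak Branch Theorem to produce a $\star$-periodic $\tau=\mu^\dag$ with $\tau\preceq\nu$ and $\tau\preceq\nu'$, place $\tau$ in $\LM(\mu)\cup\{\mu\}$ via Lemma~\ref{Lem:lilmandel}, invoke the uniform bound $N^H_\tau(n)\le C_\mu e^{h(\mu)n}$ from Lemma~\ref{Lem:lilmandel_entropy}, and finish with Lemma~\ref{Lem:estimate_from_above}. The paper even makes the same observation you do, that because the exponent in the growth bound is exactly $h(\mu)=h(\nu)$ there is no $\eps$-loss.

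The only visible difference is in how the key step $\mu\preceq\mu^\dag$ is justified. You argue by inspecting the internal-address construction inside the proof of Theorem~\ref{Thm:branch_weak}; the paper argues more tersely by taking $U=\{\nu':\Diff(\nu,\nu')>2p\}$ and using the $\Diff$-comparison $\Diff(\tau,\nu)\ge\Diff(\mu,\nu)$ to conclude $\mu\preceq\tau\preceq\nu$. One small caution on your version: the phrase ``the combinatorial characterization of $\prec$ via internal addresses underlying Lemma~\ref{Lem:int_add}'' overstates what that lemma provides --- it is only one implication, not a characterization, and $\mu\prec\nu$ alone does \emph{not} in general force the internal address of $\mu$ to be an initial segment of that of $\nu$. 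The reason it \emph{is} an initial segment here is that $\nu\in\LM(\mu)$ (so $\nu$ agrees with $\mu_\circ$ off multiples of $p$; cf.\ the proof of Lemma~\ref{Lem:LittleHubbardTree}), not merely that $\mu\prec\nu$. This is precisely the delicacy you already flag at the end, including the non-standard case $\mu_\circ=\lo\mu$.
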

Note that $\nu$ is allowed to be recurrent and tree-infinite and its maximal base sequence may still have zero entropy. Moreover, just as in the remark after Theorem~\ref{Thm:Hoelder_continuity},  locality can be dropped by enlarging the constant. Observe also that in this theorem, the H\"older exponent is $h(\nu)$, not up to an $\eps$. This implies that we cannot get rid of the constant as we did before.
 
\begin{proof}
We can choose a base sequence $\mu$ that is maximal in the sense that it itself is not renormalizable or all of its base sequences have zero entropy while $h(\mu)>0$. 

Let $p$ be the period of $\mu$ and let $U$ be the neighborhood consisting of 
all kneading sequences $\nu'$ with $\Diff(\nu,\nu')> 2p$. Now consider $\nu'\in U$ and let $\tau$ be the (weak) branch point of $\nu$ and $\nu'$ from Theorem~\ref{Thm:branch_weak} 
(where we employ the convention $\tau=\nu$ if $\nu \prec \nu'$ and $\tau=\nu'$ if $\nu' \prec \nu$). By Lemma~\ref{Lem:lilmandel} and $\Diff(\tau, \nu)\ge \Diff(\mu,\nu)$, we have $\tau \in \LM(\mu)$.  

By Lemma~\ref{Lem:lilmandel_entropy}, we have $h(\tau)=h(\mu)=h(\nu)$ and $N_\tau^H(n)\le C_\mu e^{h(\mu)n}=C_\mu e^{h(\tau)n}$, so we can apply Lemma \ref{Lem:estimate_from_above} and obtain
\[
0\le  h(\nu')-h(\tau) \le 2 C_\mu e^{-h(\tau)\Diff(\tau,\nu')}
\;.
\]
The claim follows because $h(\tau)=h(\nu)$ and $k=\Diff(\nu,\nu')\le \Diff(\tau,\nu')$.
\end{proof}

\begin{theorem}[H\"older continuity, renormalizable case, close to zero entropy]
\label{Thm:Hoelder_low}
Let $\nu$ be non-recurrent, tree-finite and renormalizable with $h:=h(\nu)>0$ but such that all base sequences have entropy zero. Then for every $\eps>0$ there exists a neighborhood $U$ of $\nu$ such that all $\nu' \in U$ satisfy
\[
|h(\nu)-h(\nu')|\le e^{-(h-\eps)k}.
\]
where $k=\Diff(\nu, \nu')$. 
\end{theorem}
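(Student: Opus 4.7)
The plan is to reduce to Theorem~\ref{Thm:HoelderNonRenorm} applied to an iterated de-renormalization of $\nu$. Since $h(\nu)>0$ while every base sequence arising in the renormalization of $\nu$ has entropy zero, I would iterate Lemma~\ref{Lem:lilmandel_entropy_zero}: starting from $\nu$, de-renormalize through each entropy-zero base. This process cannot terminate at the trivial kneading sequence (whose core entropy is zero, contradicting $h(\nu)>0$), so it must terminate at a non-renormalizable sequence $\eta$. Writing $p$ for the total period of the composite renormalization and $\mu$ for the composite base sequence, iteration of Lemma~\ref{Lem:lilmandel_entropy_zero} yields $h(\nu)=h(\eta)/p$.

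One then checks that $\eta$ satisfies the hypotheses of Theorem~\ref{Thm:HoelderNonRenorm}: non-renormalizability is by construction; tree-finiteness follows from the conjugacy $\rho_p\colon K\to\Hub(\eta)$ of Lemma~\ref{Lem:fractal}~\eqref{fractal}; non-recurrence follows from the identity $\diff(\eta,\sigma^m\eta)=\diff(\nu,\sigma^{mp}\nu)/p$, which uses that $\nu$ and $\sigma^{mp}\nu$ agree at all non-multiples of $p$; and positive entropy is immediate from $h(\eta)=p\,h(\nu)$. Theorem~\ref{Thm:HoelderNonRenorm} then provides, for every $\eps'>0$, a neighborhood $V$ of $\eta$ such that $|h(\eta)-h(\eta'')|\le e^{-(h(\eta)-\eps')k''}$ for $\eta''\in V$ with $k''=\Diff(\eta,\eta'')$.

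To transfer the bound back to $\nu$, I would follow the skeleton of the proof of Theorem~\ref{Thm:HoelderNonRenorm}. Given $\nu'$ close to $\nu$ with $k=\Diff(\nu,\nu')$, apply Theorem~\ref{Thm:branch_weak} to produce a $\star$-periodic $\tau$ with $\tau\preceq\nu$, $\tau\preceq\nu'$, and $\Diff(\tau,\nu),\Diff(\tau,\nu')\ge k$. Taking $k>\Diff(\mu,\nu)$, Lemma~\ref{Lem:lilmandel} forces $\tau\in\LM(\mu)$, and Lemma~\ref{Lem:fractal}~\eqref{dorder} gives $\tau^*:=\rho_p(\tau)\preceq\eta$ with $\Diff(\tau^*,\eta)\ge\lfloor k/p\rfloor$. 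In analogy with the choice of $q,q'$ in Theorem~\ref{Thm:HoelderNonRenorm}, I would fix a non-renormalizable characteristic periodic sequence $q'\prec\eta$ very close to $\eta$ and shrink $U$ so that $q'\prec\tau^*$ throughout; this produces a uniform expansion parameter for the $\tau^*$ as in Lemma~\ref{Lem:renormalizable_periodic}. Combined with Lemma~\ref{Lem:lilmandel_entropy_zero} applied to $\tau$ and with tree-finiteness via Lemma~\ref{Lem:FiniteTrees}, this yields a uniform bound $N_\tau^H(n)\le C\,e^{h(\tau)n}$ with $C$ depending only on $\nu$.

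With this uniform bound in hand, Lemma~\ref{Lem:estimate_from_above} applied to $\nu\succ\tau$ and to $\nu'\succ\tau$ gives
\[
|h(\nu)-h(\tau)|,\ |h(\nu')-h(\tau)|\le 2Ce^{-h(\tau)k}.
\]
Continuity of core entropy~\cite{DimaEntropy,TiozzoContinuity} ensures $h(\tau)\ge h(\nu)-\eps/2$ for $\tau$ sufficiently close to $\nu$, and the constant $2C$ is absorbed by further shrinking $U$ exactly as in the final paragraph of the proof of Theorem~\ref{Thm:HoelderNonRenorm}. This yields the desired estimate $|h(\nu)-h(\nu')|\le e^{-(h-\eps)k}$. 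The main obstacle will be establishing the uniform bound $N_\tau^H(n)\le Ce^{h(\tau)n}$: one must ensure that the de-renormalization tower of $\tau$ terminates in a non-renormalizable, tree-finite sequence with controlled expansion parameter, and that the intermediate base sequences have entropy zero. This is ultimately guaranteed by the entropy-zero assumption on base sequences of $\nu$ together with the inheritance of characteristic periodic points from $\Hub(\tau)$ to $\Hub(\nu)$ via Theorem~\ref{Thm:InclusionTrees}, but it requires a careful propagation argument to rule out new renormalizations with positive-entropy bases appearing in $\tau$.
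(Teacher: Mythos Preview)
Your overall strategy is the paper's: take the maximal renormalization period $p$ so that $\eta=\rho_p(\nu)$ is non-renormalizable, non-recurrent and tree-finite; push the weak branch point $\tau$ of $\nu,\nu'$ down to $\tau^*=\rho_p(\tau)$; control precritical growth there; and lift back via Lemma~\ref{Lem:lilmandel_entropy_zero}. The gap is the sentence ``shrink $U$ so that $q'\prec\tau^*$ throughout; this produces a uniform expansion parameter for the~$\tau^*$.'' Knowing $q'\prec\tau^*$ only guarantees that $\tau^*$ is uniformly expanding with \emph{some} parameter $\lambda(\tau^*)$ (Lemma~\ref{Lem:renormalizable_periodic}), not with one independent of $\nu'$. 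As $\tau^*$ approaches $\eta$, orbit points of $\tau^*$ may well lie between $q$ and $\tau^*$ --- $\tau^*$ being characteristic forces its orbit to stay before $\tau^*$, not before $q$ --- and then $[\tau^*,(\tau^*)^i]$ need not contain the fixed interval $[q',q]$ that drives the uniform bound in the proof of Theorem~\ref{Thm:HoelderNonRenorm}.

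The paper repairs exactly this by inserting Lemma~\ref{Lem:mu}: apply it in the $\eta$-dynamics to $\rho_p(\tau)$, obtaining a $\star$-periodic $\mu'$ with $q\preceq\mu'\preceq\rho_p(\tau)$, with $q$ separating $\mu'$ from its entire forward orbit, and with $\diff(\mu',\eta)\ge\diff(\rho_p(\tau),\eta)-r-1$. This is what gives $[\mu',(\mu')^i]\supset[q',q]$ for every $i$, hence a common expansion parameter and the uniform bound $N_{\mu'}(n)\le C'_\nu e^{h(\mu')n}$ with $C'_\nu$ independent of $\nu'$. One then lifts $\mu'$ back to $\tau'\in\LM(\mu)$ with $\rho_p(\tau')=\mu'$ and applies Lemma~\ref{Lem:lilmandel_entropy_zero} to $\tau'$, not to $\tau$. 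Your closing worry about new positive-entropy bases appearing in the tower of $\tau$ is a red herring: $\tau'$ has base sequence $\mu$ by construction, and $h(\mu)=0$ by hypothesis. The missing ingredient is precisely the Lemma~\ref{Lem:mu} replacement of $\tau^*$ by a nearby $\mu'$ whose orbit is uniformly separated from $\eta$.
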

\begin{proof}
The proof follows the main steps as for Theorem~\ref{Thm:HoelderNonRenorm}. Just like there, it suffices to find constants $C_\nu, R$ and a neighborhood $U$ only depending on $\nu$ and $\epsilon$ such that for any $\nu' \in U$ we can find $\tau' \in U$ such that
\begin{enumerate}
\item $\tau'\prec \nu$, $\tau'\prec \nu'$ and $h(\tau')\ge h(\nu)-\eps$
\item \label{it:taubound} $N_{\tau'}(n)\le C_\nu e^{h(\tau')n}$ and
\item $\min(\Diff(\tau',\nu),\Diff(\tau', \nu))\ge \Diff(\nu, \nu')-R$. 
\end{enumerate}
We claim that there exists $p$ maximal such that $\nu$ is $p$-renormalizable. If not, we would have a sequence of base sequences converging to $\nu$, but since they are all assumed to have zero entropy, this contradicts continuity of entropy. 
Let $\mu$ thus be the $p$-periodic base sequence of $\nu$ and $\eta$ the associated de-renormalized sequence. By maximality, the sequence $\eta$ is not renormalizable. By Lemma~\ref{Lem:fractal}~\eqref{fractal}, it inherits non-recurrence and tree-finiteness from $\nu$. 

Given $\nu'$, let $\tau$ be its branch point with $\nu$ (again in the sense of Theorem~\ref{Thm:branch_weak}). If $\nu'$ is close enough to $\nu$, we can ensure that $\tau$ is also renormalizable with base sequence $\mu$. Let us now apply Lemma~\ref{Lem:mu} with $\eta$ and $\rho_p(\tau)$ to yield a $\star$-periodic sequence $\mu'$ with the properties listed in the lemma. Hence, we find a constant $C_\nu'$ (that does not depend on $\nu'$) such that $N_{\mu'}(n)\le C_\nu' e^{h(\mu')n}$. Let $\tau'$ be the renormalizable sequence with base sequence $\mu$ such that $\rho_p(\tau')=\mu'$. By  Lemma~\ref{Lem:fractal}~\eqref{dorder}, we have $\mu \prec \tau' \preceq \tau \preceq \nu$. By Lemma~\ref{Lem:lilmandel_entropy_zero}, we then have \eqref{it:taubound} as the constant can be computed from $C_\nu', p, \eps$ and $h(\nu)$. The other properties also follow as their equivalences hold for $\mu'$ after adjusting $U$. 
\end{proof}

We conclude this section by discussing kneading sequences with entropy zero: here the conjecture is that one does not have H\"older continuity (for any positive exponent). We confirm this conjecture for the Feigenbaum sequence; there is a similar argument in greater generality.

\begin{corollary}[No H\"older continuity at zero entropy]
\label{Cor:zero_entropy}
There exists a kneading sequence $\nu$ with $h(\nu)=0$ such that core entropy is \emph{not} locally H\"older continuous at $\nu$ for any exponent. 
\end{corollary}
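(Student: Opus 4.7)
The plan is to take $\nu$ to be the Feigenbaum kneading sequence, namely the limit in $\K$ of the period-doubling cascade $\mu_0=\overline\star\prec\mu_1=\overline{\1\star}\prec\mu_2=\overline{\1\0\1\star}\prec\cdots$, where each $\mu_{k+1}$ is the bifurcation of $\mu_k$ with period $2^{k+1}$. Since every $\mu_k$ is obtained by iterated bifurcation from the trivial sequence, the remark after Lemma~\ref{Lem:periodic_bound} gives $h(\mu_k)=0$; continuity of core entropy applied to $\mu_k\to\nu$ then yields $h(\nu)=0$. Observe that by construction $\nu$ is $2^k$-renormalizable with base sequence $\mu_k$ for every $k$.

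In order to rule out H\"older continuity at every positive exponent, I would exhibit a sequence $\nu_k\to\nu$ along which $h(\nu_k)$ decays only geometrically in $k$ while $\Diff(\nu,\nu_k)$ grows like $2^k$. Fix once and for all a $\star$-periodic, non-renormalizable, tree-finite kneading sequence $\eta$ with $h(\eta)>0$ — for instance $\eta=\overline{\1\0\star}$, whose positive entropy follows from the remark after Lemma~\ref{Lem:periodic_bound} since a period-$3$ sequence cannot be produced by binary bifurcation. Define $\nu_k$ as the $\star$-periodic sequence of period $3\cdot 2^k$ obtained by placing the symbols of $\eta$ at the positions left free by the $\star$'s of $\mu_k$: explicitly, $(\nu_k)_i=(\up{\mu_k})_i$ whenever $2^k\nmid i$, and $(\nu_k)_{j\cdot 2^k}=\eta_j$ for $j=1,2,3$. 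By Lemma~\ref{Lem:LittleHubbardTree}, $\nu_k$ is $2^k$-renormalizable with base $\mu_k$ and de-renormalized sequence $\eta$.

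Since $\mu_k$ has entropy zero while $\eta$ is tree-finite and non-renormalizable with $h(\eta)>0$, Lemma~\ref{Lem:lilmandel_entropy_zero} applies and delivers $h(\nu_k)=h(\eta)/2^k$. On the combinatorial side, both $\nu$ and $\nu_k$ are $2^k$-renormalizable with the same base sequence $\mu_k$, so Lemma~\ref{Lem:LittleHubbardTree} forces each of them to agree with $\up{\mu_k}$ at every position not divisible by $2^k$; in particular they agree at positions $1,\dots,2^k-1$, giving $\Diff(\nu,\nu_k)\ge 2^k$. If core entropy were locally H\"older continuous at $\nu$ with any exponent $\alpha>0$ and constant $C$, one would have $h(\eta)/2^k\le Ce^{-\alpha\,2^k}$ for all large $k$, equivalently $e^{\alpha\,2^k}\le C\cdot 2^k/h(\eta)$, which is absurd since the left-hand side grows doubly exponentially while the right-hand side grows linearly.

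The main delicate point is verifying the existence of $\nu_k$ with the claimed renormalization structure and the hypotheses of Lemma~\ref{Lem:lilmandel_entropy_zero}: one has to check that $\up{\mu_k}$ is genuinely the characteristic itinerary in $\Hub(\nu_k)$ (so that Lemma~\ref{Lem:LittleHubbardTree} certifies $\mu_k$ as base sequence) and that $\eta$ meets the non-renormalizability and tree-finiteness requirements. Both reduce to combining Lemma~\ref{Lem:int_add}, the self-similarity statement Lemma~\ref{Lem:fractal}, and the fact that $\mu_k$ lies in the bifurcation cascade of the trivial sequence; the remaining bookkeeping is a direct comparison of scales.
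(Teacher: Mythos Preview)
Your argument is correct and shares the same skeleton as the paper's: both take $\nu$ to be the Feigenbaum sequence, establish $h(\nu)=0$ via continuity along the period-doubling cascade, and then exhibit approximants with entropy of order $c/2^k$ while $\Diff(\nu,\cdot)\ge 2^k$, which kills H\"older continuity at every positive exponent.

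The difference lies in the choice of approximants. The paper takes the $\star$-periodic sequences $\nu_n$ with internal address $1\to 2\to\cdots\to 2^n\to(2^n+1)$; these are \emph{non}-renormalizable, so the crude lower bound of Lemma~\ref{Lem:periodic_bound} already gives $h(\nu_n)>\log 2/2^n$, and nothing more is needed. You instead take renormalizable $\nu_k$ with base $\mu_k$ and fixed de-renormalized sequence $\eta=\overline{\1\0\star}$, then invoke Lemma~\ref{Lem:lilmandel_entropy_zero} to obtain the exact value $h(\nu_k)=h(\eta)/2^k$. This buys you a precise entropy computation rather than a bare inequality, at the cost of checking the hypotheses of Lemma~\ref{Lem:lilmandel_entropy_zero} (that $\mu_k$ really is the base, that $\eta$ is tree-finite and non-renormalizable, and that $h(\nu_k)>0$; the last follows from the lower-bound half of the lemma's own proof, so there is no circularity). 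The paper's route is shorter and uses less machinery; yours is equally valid and illustrates the self-similarity of entropy under renormalization more explicitly.
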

\begin{proof}
Consider $\nu$ to be the so-called Feigenbaum point with internal address $1-2-4-8-16-\ldots  $. Note that all the approximating periodic sequences with internal addresses $1-2,1-2-4,1-2-4-8,\ldots  $ are bifurcations of $\overline{\star}$. As such, precritical points only grow linearly and they have entropy zero. By continuity of entropy, $\nu$ has zero entropy, too. Next, we want to look at the sequence $\nu_n$ with internal addresses $1-2-\ldots  -2^n-2^n+1$. They are not renormalizable, so by Lemma \ref{Lem:periodic_bound}, $h(\nu_n)>\log 2/ 2^n$. Since $\Diff(\nu_n,\nu)\ge 2^n+1$, we obtain:
\[
|h(\nu_n)-h(\nu)|> \frac{\log 2}{2^n} > \log 2 \frac{1}{\Diff(\nu_n,\nu)}.
\]
Hence, core entropy is not H\"older continuous at $\nu$. 
\end{proof}

\Newpage

\section{External angles and kneading sequences}
\label{Sec:angles}

The conjecture about H\"older continuity of core entropy is often formulated in terms of external angles. Douady and Hubbard showed that the Mandelbrot set $\M$ is compact, connected and full (i.e. $\C\setminus \M$ is connected), and there is a unique conformal isomorphism $\Phi: \Cbar \sm \M \to\Cbar\sm\diskbar$ 
with $\lim_{c \to \infty} \Phi(c)/c \to 1 $. Under the assumption that the Mandelbrot set is locally connected, $\Phi^{-1}$ extends to the boundaries as a continuous surjection. Hence, every point $c \in  \partial \M$ can be described by one (or several) \emph{external angles} $\theta\in \R/\Z =\Circle$ such that $\Phi^{-1}(e^{2\pi i\theta})=c$. For every $c$ there may be finitely many choices for $\theta$, but this does not change the dynamics described below: all such angles $\theta$ are known to have the same kneading sequence. The assumption of local connectivity of $\M$ is not really necessary: every $c\in\partial\M$ is still associated to at least one and at most finitely many external angles $\theta$ \cite{Fibers2,IntAddr}.

Every external angle $\theta\in\Circle$ has an associated kneading sequence defined as follows:
we divide the unit circle $\Circle$ at the points $\theta/2$ and $(\theta+1)/2$ (the preimages of $\theta$ under angle doubling); see Figure~\ref{Fig:KneadingAngle}. Let $A_\1$ be the open arc of $\Circle$ containing $\theta$ and let $A_\0$ be the other arc. The kneading sequence $\nu(\theta)=\nu_1\nu_2\ldots $ is defined as follows: for $k\ge 1$ and $i\in \{\0,\1\}$, if $2^{k-1}\theta \in A_i$, then $\nu_k=i$. If $2^{k-1}\theta \in \{\theta/2, (\theta+1)/2\}$, then $\nu_k=\star$; this implies that $\nu(\theta)$ is $\star$-periodic with period $k$. 

\begin{figure}[htbp]
\framebox{
\begin{picture}(220,220)
\includegraphics[width=.6\textwidth]{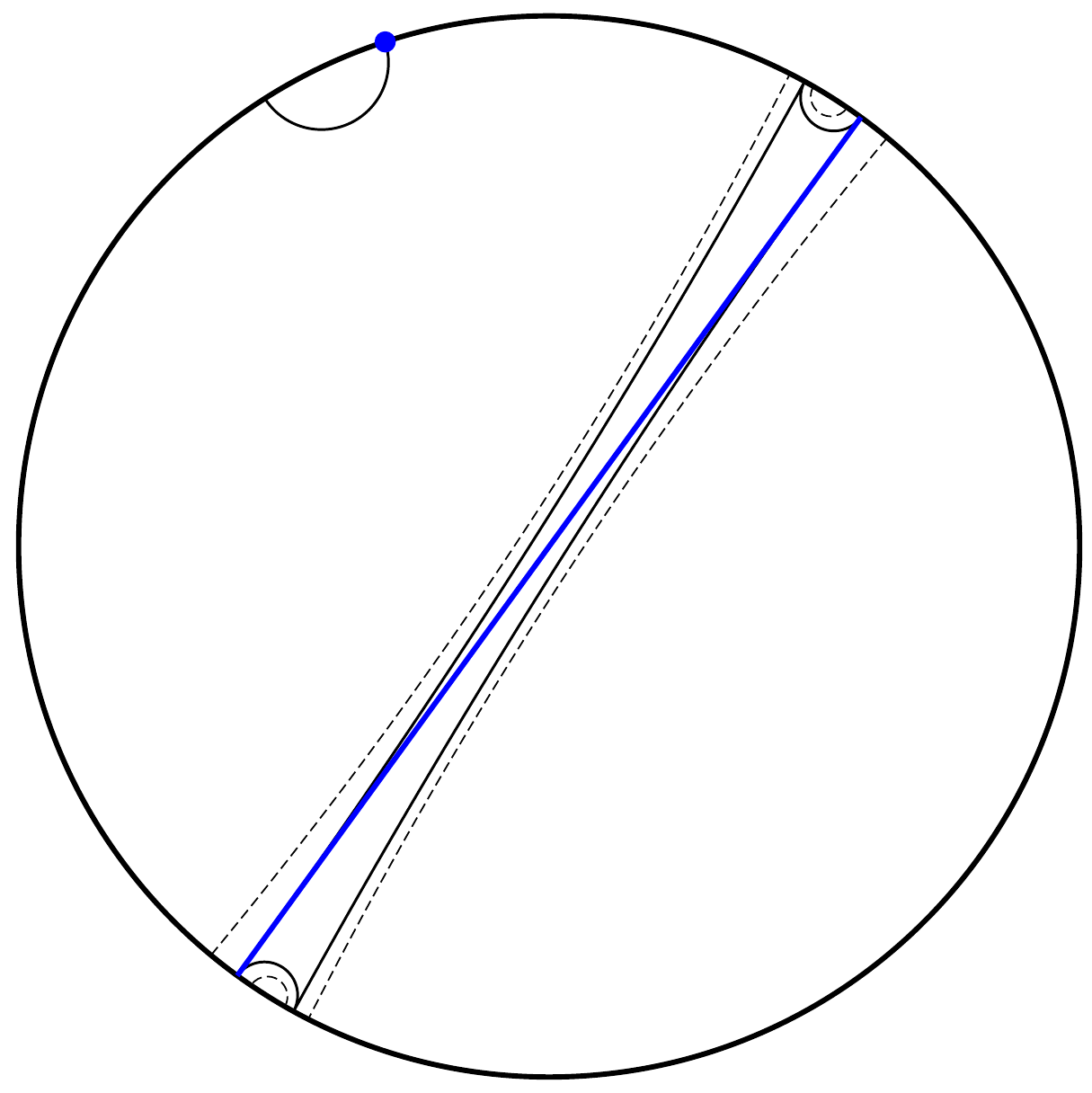}
\put(-145,212){$\theta$}
\put(-168,203){$\theta'$}
\put(-160,160){$A_{\1}$}
\put(-60,60){$A_{\0}$}
\end{picture}
}
\framebox{
\begin{picture}(220,140)(0,105)
\includegraphics[width=.6\textwidth]{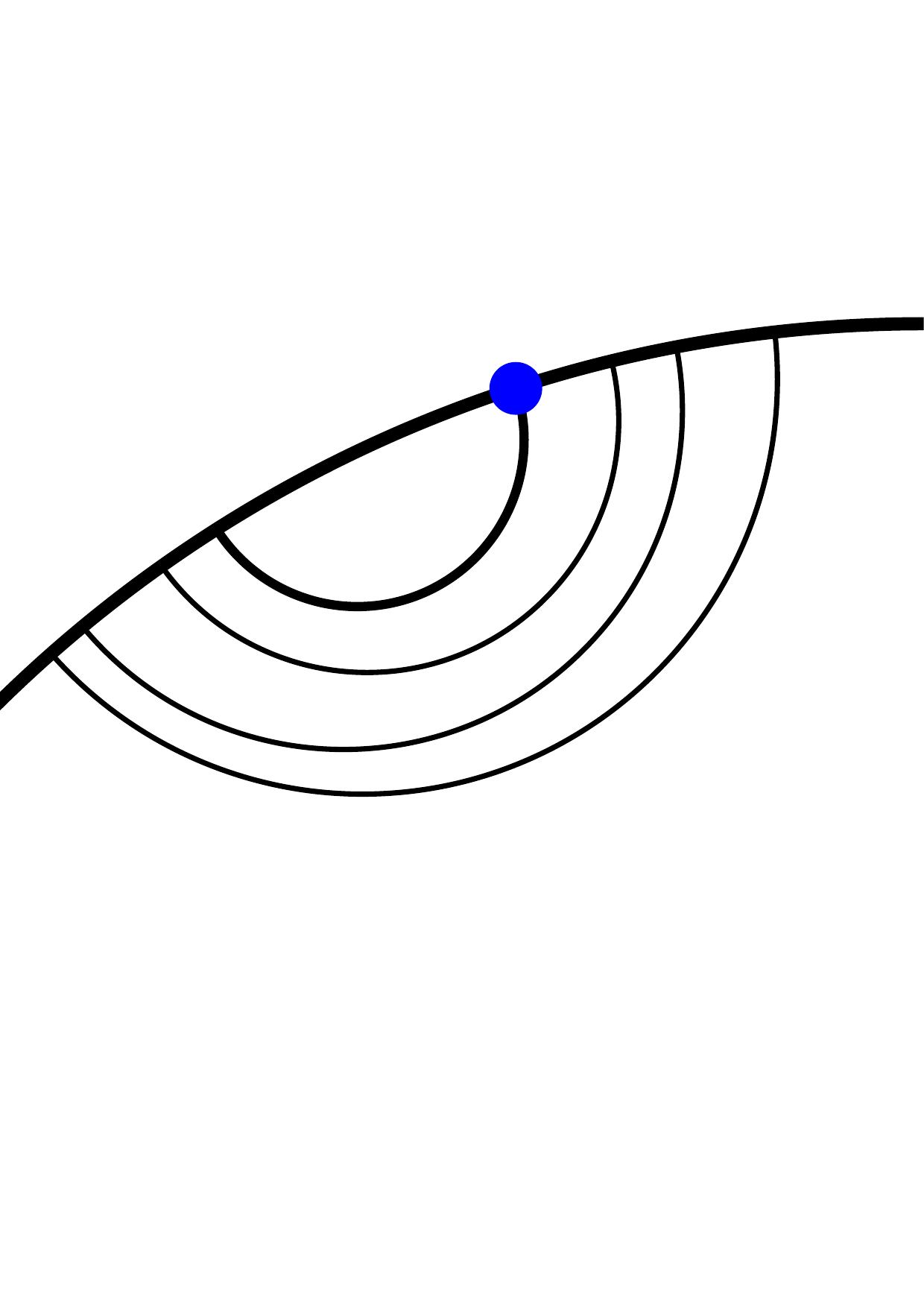}
\put(-105,222){$\theta$}
\put(-173,185){$\theta'$}
\put(-82,228){$\theta'_k$}
\put(-190,179){$\theta_k$}
\put(-70,231){$\theta_{n+k}$}
\put(-213,169){$\theta'_{n+k}$}
\put(-43,234){$\theta'_n$}
\put(-218,156){$\theta_n$}
\end{picture}
}
\caption{The kneading sequence as defined in terms of an external angle $\theta$ (marked by a blue dot). Top: the two preimages of $\theta$ form the precritical diameter (marked in blue); the two complementary domains (on $\Circle$ or $\disk$) are called $A_\0$ and $A_\1$. The companion angle $\theta'$ of $\theta$ is marked as well. The four preimages of $\theta$ and $\theta'$ form the central quadrilateral (drawn in solid lines); it is approximated by precritical leaves (representative leaves are indicated as dashed lines). Bottom: detail around the minor leaf with some approximating leaves as constructed in the proof.}
\label{Fig:KneadingAngle}
\end{figure}

\begin{lemma}[Recurrent kneading sequences]
\label{Lem:recurrent_angles}
For every angle $\theta\in\Circle$, the kneading sequence is a recurrent sequence in $\{\0,\1\}^\infty$ if and only if $\theta$ is recurrent under angle doubling on $\Circle$.
\end{lemma}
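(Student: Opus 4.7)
The plan is to relate both notions of recurrence through the fact that the kneading sequence $\nu(\theta)$ is precisely the itinerary of $\theta$ under angle doubling with respect to the partition $\{A_\0, A_\1\}$. For any $N \geq 1$, let $P_N$ be the finite set of angles $\alpha \in \Circle$ whose forward orbit reaches $\{\theta/2, (\theta+1)/2\}$ within $N-1$ steps, equivalently, the angles $\alpha$ with $2^j \alpha = \theta$ for some $j \in \{1, \dots, N\}$. The connected components of $\Circle \setminus P_N$ are exactly the \emph{cylinder sets} on which the first $N$ entries of the itinerary are constant. Denote by $C_N(\theta)$ the component containing $\theta$. Since $\theta$ recurrent (in either sense) forces $\theta$ non-periodic, we have $\theta \notin P_N$ for every $N$, so each $C_N(\theta)$ is an open arc containing $\theta$ in its interior. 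The key observation is that $\text{diam}(C_N(\theta)) \to 0$: because $P_N$ consists of the iterated preimages of $\theta/2$ under $f(x) = 2x$, and because backward orbits under doubling are uniformly dense in $\Circle$ (since $f^{-N}$ of a point forms a $2^{-N}$-net), for every neighborhood $U$ of $\theta$ one eventually finds points of $P_N$ in both components of $U \setminus \{\theta\}$, pinning $C_N(\theta) \subset U$.

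For the forward direction, assume $\theta$ is recurrent under doubling, so in particular $\theta$ is not periodic and there is a sequence $n_k \to \infty$ with $2^{n_k} \theta \to \theta$. Since $\theta$ is not periodic, $\nu(\theta)$ contains no $\star$ and is not a periodic sequence (otherwise two distinct angles would share the same non-$\star$-periodic itinerary, contradicting uniqueness). For any $N$, the cell $C_N(\theta)$ contains an open neighborhood of $\theta$, so $2^{n_k} \theta \in C_N(\theta)$ for all sufficiently large $k$. This means $\nu(\theta)$ and $\sigma^{n_k} \nu(\theta)$ share their first $N$ entries, i.e.\ $\diff(\nu(\theta), \sigma^{n_k} \nu(\theta)) \geq N$. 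As $N$ was arbitrary, $\diff(\nu(\theta), \sigma^n \nu(\theta))$ is unbounded and $\nu(\theta)$ is recurrent.

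For the reverse direction, assume $\nu(\theta)$ is recurrent, so it is non-periodic and for every $N$ there exists $n = n_N \geq N$ with $\diff(\nu(\theta), \sigma^{n} \nu(\theta)) \geq N$. Non-periodicity of $\nu(\theta)$ excludes $\star$, which forces $\theta$ non-periodic (otherwise the kneading sequence would be $\star$-periodic). The agreement condition on the first $N$ symbols translates to $2^{n_N}\theta \in C_N(\theta)$, and since $\theta$ is non-periodic we have $2^{n_N}\theta \neq \theta$. Because $\text{diam}(C_N(\theta)) \to 0$, the iterates $2^{n_N}\theta$ converge to $\theta$ while remaining distinct from it, establishing that $\theta$ is recurrent (non-periodic with $\theta$ as an accumulation point of its forward orbit).

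The only nontrivial step is the shrinking of the cells $C_N(\theta)$, which I expect to be the main technical obstacle to phrase carefully, but it reduces to the elementary and classical fact that preimages of any angle under the doubling map are $2^{-N}$-dense in $\Circle$. Everything else is a direct translation between the combinatorics of cylinders in the kneading partition and topological proximity in $\Circle$.
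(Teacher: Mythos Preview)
Your argument for the reverse direction has a genuine gap, and it is precisely the obstacle that forces the paper's more elaborate lamination argument.

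You assert that ``the connected components of $\Circle\setminus P_N$ are exactly the cylinder sets on which the first $N$ entries of the itinerary are constant,'' and then in the reverse direction you conclude that agreement of the first $N$ symbols forces $2^{n_N}\theta\in C_N(\theta)$. But the components of $\Circle\setminus P_N$ are \emph{not} in bijection with length-$N$ itinerary words: already $|P_N|=2^{N+1}-2$ for non-periodic $\theta$, while there are only $2^N$ words, so several arcs share the same $N$-itinerary. Agreement on the first $N$ symbols only places $2^{n_N}\theta$ in \emph{some} arc with that itinerary, not necessarily the one containing $\theta$. Your shrinking estimate $\operatorname{diam}C_N(\theta)\to 0$ is correct, but it controls only the component through $\theta$; the other components with the same word need not be close to $\theta$.

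This is not a technicality that can be patched by a density argument. In general there is a \emph{companion angle} $\theta'$ with exactly the same infinite itinerary as $\theta$ (the two endpoints of Thurston's minor leaf), and the orbit $2^n\theta$ can a priori accumulate on $\theta'$ rather than on $\theta$. The paper confronts this head-on: it uses the quadratic lamination to show that angles whose itinerary matches $\nu(\theta)$ for many entries must be close to $\theta$ or to $\theta'$, and then runs a separate argument (minimality of the minor-leaf length along its forward orbit, plus a homeomorphic return) to show that accumulation on $\theta'$ still forces accumulation on $\theta$. Your proposal skips this second angle entirely, so the reverse implication is not established.
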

\begin{proof}
The statement excludes angles that are periodic under angle doubling: their associated kneading sequences are periodic sequences that involve the symbol $\star$, and we do not view them as recurrent.

If an angle $\theta$ is recurrent, then the associated kneading sequence is obviously recurrent as well. The converse is a bit more interesting. It is based on Thurston's theory of quadratic minor laminations \cite{Thurston}: this lamination extends the dynamics of angle doubling on the circle to parts of the disk bounded by the circle. 

Fix the angle $\theta$ of which we compute the kneading sequence. Then every angle $\phi\in\Circle$ has an associated \emph{itinerary}: this is the sequence $\tau_n\in\{\0,\1, \star\}^\infty$ so that the sequence $2^n\phi$ visits the arc $A_{\tau_n}$ at time $n$ (the $\star$ symbol occurs when one of the two preimages of $\theta$ is hit; this happens at most once except when $\theta$ is periodic). The 
itinerary of the partition angle $\theta$ itself is the kneading sequence). 

The problem is that the subset of angles  $\phi\in\Circle$ that generate the same initial $n$ entries in their itineraries is not connected. We obtain, however, a connected subset of the disk $\disk$ inside $\Circle$, as follows: the angle $\theta$ has preimages on $\Circle$ under angle doubling, $\phi/2$ and $(\phi+1)/2$. Connect these two by a diameter in $\disk$. The angle $\theta$ is associated to the critical value of a quadratic polynomial (in terms of a dynamic ray that lands or accumulates there); in the same sense, the diameter is associated to the critical point, which is the landing point of the two preimage rays. 

We continue to construct precritical points by induction: each of these is associated to a pair of external angles that are connected in $\disk$ (by a geodesic with respect to the Euclidean metric of $\disk$, or more customarily with respect to the hyperbolic metric). The two angles associated to each precritical point have a total of four preimages, and these can be connected in pairs within $\disk$ in exactly one way by a geodesic so as to not cross the connections generated by previously constructed precritical points; see for instance \cite{Thurston}. Each such geodesic is called a \emph{(precritical) leaf} in the lamination associated to the angle $\theta$ (in the language of Thurston, this lamination is not ``clean''). 

We define the \emph{central gap} as follows: we construct all precritical leaves of depths $2$ and more (that is, all except the diameter corresponding to the critical point); the complementary component in $\disk$ containing the origin (and thus the critical diameter) is called the central gap. Thurston \cite{Thurston} shows that the central gap is either a diameter or a quadrilateral, except if the kneading sequence periodic --- a case that is excluded for us. 

This implies that every angle on $\Circle$, except the two or four vertices of the central gap, are separated from the central gap by at least one precritical leaf. Let us first consider the case that the central gap is a quadrilateral (it has four vertices). Then for every $\eps>0$ there are four precritical leaves so that every angle in $\Circle$ outside the $\eps/2$-neighborhood of the four vertices of the central quadrilateral is separated from the central quadrilateral by one of these four precritical leaves. Taking images by angle doubling, the four vertices of the central quadrilateral map to $\theta$ and another angle, say $\theta'$ called the \emph{companion angle of $\theta$}. In this case 
there are two precritical leaves, say $\ell_\eps$ and $\ell'_\eps$, that separate $\theta$ and its companion angle from every angle outside their $\eps$-neighborhood. 

If the minimal depth of $\ell_\eps$ and $\ell'_\eps$ is $n$, then this implies that angles for which the itinerary coincides with that of $\theta$ for more than $n$ entries are $\eps$-close to either $\theta$ or its companion angle. 

Therefore, if the kneading sequence is recurrent, infinitely many angles $\theta_n=2^n\theta$ are $\eps$-close to $\theta$ or to $\theta'$. If infinitely many are $\eps$-close to $\theta$, then we are done (since $\eps>0$ was arbitrary). It remains to discuss the case that infinitely many $\theta_n$ are $\eps$-close to $\theta'$. 

First observe that the \emph{minor leaf} connecting $\theta$ to $\theta'$ is  
disjoint from all precritical leaves: if some precritical leaf were to intersect the minor leaf, then it would also intersect one of the precritical leaves that approximate the minor leaf (the image of a precritical close to the central gap), but precritical leaves are disjoint by construction. 

Let $\ell_n$ be the length of the image leaf $(2^n\theta,2^n\theta')$, for $n\ge 0$. Our next claim is that  $\ell_n\ge \ell_0$ for all $n$. If not, let $m$ be minimal such that $l_m<l_0$. Then either $\ell_{m-1}=\ell_m/2$ or $\ell_{m-1}=1-\ell_m/2$ by the standard formula for lengths of image leaves. The first case is excluded by minimality of $m$, and the second one implies that $\ell_{m-1}$ must intersect the central gap, which is impossible also. 

This implies that for $n\ge 1$, no image angle $\theta_n$ or $\theta'_n$ can be in $(\theta,\theta')$: otherwise, both $\theta_n$ and $\theta'_n$ must be in that interval, a contradition to the minimal length of the minor leaf.

By compactness, the angles $\theta_n'$ for which $\theta_n$ is close to $\theta'$ need to accumulate somewhere on the circle. Since the minor leaf is approximated by precritical leaves and has the shortest length among its forward orbit, the only possible accumulation point is $\theta$. 

Fix a particular $n$ for which the leaf $(\theta'_n,\theta_n)$ is $\eps$-close to $(\theta, \theta')$ in this order. A sufficiently small neighborhood, say $U$, of $(\theta,\theta')$ maps homeomorphically to a neighborhood of $(\theta'_n,\theta_n)$ under $n$ iterations. Another image leaf $(\theta'_k,\theta_k)$ must lie in $U$ between the two other leaves. In particular, if the vector from $\theta$ to $\theta'_k$ points clockwise, so will its homeomorphic image under $2^n$. Hence, the leaf $(\theta_{n+k}, \theta'_{n+k})$ lies between $(\theta'_n,\theta_n)$ and the minor leaf. In particular, it is $\eps$-close to the minor leaf and recurrence of $\theta$ follows. 

The other case is that the central gap is degenerate in the sense that it consists of a diameter alone, made up of the two preimage angles of $\theta$. The argument then simplifies as $\theta'=\theta$.\end{proof}

\begin{lemma}[H\"older continuity in terms of external angles]
Suppose an external angle $\theta$ is such that core entropy is H\"older continuous with exponent $h>0$ for the kneading sequence $\nu(\theta)$, locally in a neighborhood of $\nu(\theta)$. Suppose also that $\theta$ is non-recurrent with respect to angle doubling. Then core entropy as a function of angles is H\"older continuous at $\theta$ with exponent $(h-\eps)/\log 2$ for every $\eps>0$.
\label{Lem:angles}
\end{lemma}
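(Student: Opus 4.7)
The strategy is to compare two natural metrics on parameter space: the angular metric on $\Circle$ and the combinatorial metric induced by $\Diff$ on $\K$. The key local estimate I would establish is
\[
\Diff(\nu(\theta),\nu(\theta'))\ge \log_2\frac{1}{|\theta-\theta'|}-C
\]
for every $\theta'$ sufficiently close to a non-recurrent $\theta$, with $C=C(\theta)$. Once this inequality is in hand, the conclusion follows by plugging into the hypothesized kneading-sequence H\"older bound (of the shape furnished by Theorems~\ref{Thm:HoelderNonRenorm}, \ref{Thm:HoelderRenorm}, and \ref{Thm:Hoelder_low}) and exponentiating, turning $e^{-(h-\eps')\Diff}$ into $|\theta-\theta'|^{(h-\eps')/\log 2}$ times a constant.

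For the main estimate I first exploit non-recurrence. By Lemma~\ref{Lem:recurrent_angles}, non-recurrence of $\theta$ under angle doubling corresponds to non-recurrence of $\nu(\theta)$, so the forward orbit of $\theta$ on $\Circle$ is bounded away from $\theta$, hence (since preimages of $\theta$ are $\{\theta/2,(\theta+1)/2\}$) also from the partition $P(\theta)$: there exists $d=d(\theta)>0$ so that $\operatorname{dist}(2^n\theta,P(\theta))\ge d$ for all $n\ge 1$ (with the obvious amendment for periodic $\theta$ discussed below). Now fix $\theta'$ with $|\theta-\theta'|<d/3$, and let $n$ be the largest integer with $2^{n-1}|\theta-\theta'|<d/3$, so that $n\ge \log_2(1/|\theta-\theta'|)-O(1)$. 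For every $k\le n$, both $|2^{k-1}\theta-2^{k-1}\theta'|<d/3$ and the Hausdorff distance $\operatorname{dist}_H(P(\theta),P(\theta'))\le |\theta-\theta'|/2<d/6$; hence $2^{k-1}\theta'$ lies on the same arc of $\Circle\setminus P(\theta')$ as $2^{k-1}\theta$ does of $\Circle\setminus P(\theta)$. Consequently, the first $n$ symbols of $\nu(\theta)$ and $\nu(\theta')$ agree, giving the desired inequality.

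To translate, plug into the kneading H\"older bound $|h(\nu(\theta))-h(\nu(\theta'))|\le C_1 e^{-(h-\eps')\Diff}$ to obtain
\[
|h(\nu(\theta))-h(\nu(\theta'))|\le C_1 e^{(h-\eps')C}\cdot |\theta-\theta'|^{(h-\eps')/\log 2}.
\]
Choosing $\eps'<\eps$ and shrinking the neighborhood so as to absorb the multiplicative constant (exactly as in the remark following Theorem~\ref{Thm:Hoelder_continuity}, using that an extra factor of $|\theta-\theta'|^{\eps/\log 2}$ can be forced below $1$) yields H\"older continuity at $\theta$ with exponent $(h-\eps)/\log 2$.

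The main obstacle is the case where $\theta$ is periodic under doubling (of period $p$), so that $\nu(\theta)$ is $\star$-periodic and $\Diff$ is computed through the $\pi_\0$ and $\pi_\1$ projections rather than via $\diff$ directly. Here the orbit of $\theta$ does return to $P(\theta)$ at steps $p-1,2p-1,\dots$, so the orbit-avoidance bound must be stated as $\operatorname{dist}(2^n\theta,P(\theta))\ge d$ for $n$ outside these returns. The argument above then ensures that outside of positions $jp$ the symbols of $\nu(\theta')$ match a fixed $\{\0,\1\}$-substitution of the $\star$-entries of $\nu(\theta)$; and at each return, Thurston's lamination analysis from the proof of Lemma~\ref{Lem:recurrent_angles} (the minor leaf is the shortest in its forward orbit, and is approximated by precritical leaves separating $\theta$ from its companion) shows that for $\theta'$ in either of the two one-sided neighborhoods of $\theta$, the symbol of $\nu(\theta')$ at position $jp$ stabilizes to the $\0$ or $\1$ determined by the side. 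Choosing the matching substitution to compute $\Diff$ on that side yields the same lower bound up to an additive constant depending only on $p$, and the rest of the argument goes through unchanged.
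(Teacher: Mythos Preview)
Your argument is correct in the non-periodic case and takes a genuinely different route from the paper. The paper does not bound the orbit of $\theta$ away from the partition; instead it observes that whenever $\diff(\nu(\theta),\nu(\phi))=k$, there is a periodic angle $\theta_*$ of period $k$ between $\theta$ and $\phi$, and then estimates $|2^k\theta-\theta|\le (2^k+1)|\theta-\theta_*|<2^{k-n+1}$. This yields only the multiplicative relation $k\ge(1-\eps/h)\,n$ (else $\theta$ would be recurrent), whereas your direct partition-distance argument gives the sharper additive bound $k\ge n-C$. Both suffice for the stated exponent $(h-\eps)/\log 2$; your route is more elementary and avoids invoking the periodic intermediate angle, while the paper's route has the virtue that it never needs to compare the two moving partitions $P(\theta)$ and $P(\theta')$.

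One caution: your handling of the periodic case is the weakest part. When $\theta$ has period $p$, the orbit literally hits $P(\theta)$ at times $p-1,2p-1,\dots$, so your gap $d$ does not exist, and you fall back on a lamination argument to pin down the symbol at positions $jp$. This sketch is plausible but considerably less detailed than your non-periodic argument; in particular you should justify that for $\theta'$ on a fixed side of $\theta$, the iterate $2^{jp-1}\theta'$ lies on a fixed side of $P(\theta')$ for \emph{all} $j\le n/p$ (not just $j=1$), uniformly in how close $\theta'$ is. The paper's contrapositive argument sidesteps this by never isolating the periodic positions, so if you want a clean write-up you might borrow that device for periodic $\theta$ while keeping your direct estimate for the aperiodic case.
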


\begin{proof}
By hypothesis $\nu=\nu(\theta)$ has an index $N>0$ so that if $k:=\diff(\nu,\mu)>N$, then $|h(\nu)-h(\mu)|<e^{-hk}$. 

Consider an angle $\phi$ such that $2^{-n-1}\le |\phi-\theta|<2^{-n}$ for $n>N$ and set $\mu:=\nu(\phi)$. Let $k:=\diff(\mu,\nu)$. 
Then there is a periodic angle $\theta_*$ between $\theta$ and $\phi$ of period $k$, and $|\theta-\theta_*|<2^{-n}$. It follows
\begin{align*}
 |2^k\theta-\theta| &\le |2^k(\theta-\theta_*)| + |2^k\theta_*-\theta_*|+|\theta_*-\theta| \le  (2^k+1) |\theta_*-\theta| \\&<(2^k+1)2^{-n} =2^{k-n}+2^{-n}
\;.
\end{align*}

Now fix $\eps>0$. If there are angles $\phi$ arbitrarily close to $\theta$ with associated numbers $n=n(\phi)$ and $k=k(\phi)$ as above such that $k(\phi)\le (1-\eps/h)n(\phi)$, then we have $k(\phi)-n(\phi)\le -(\eps/h) n(\phi)$ and thus
\[
|2^{k(\phi)}\theta-\theta| \le 2^{-(\eps/h) n(\phi)}+2^{-n(\phi)}
\;;
\]
since $\phi$ close to $\theta$ means $n(\phi)$ is large, this implies that $\theta$ is recurrent.

By hypothesis, $\theta$ is not recurrent, so there must be a neighborhood of $\theta$ in which all angles $\phi\neq\theta$ satisfy $k(\phi)\ge (1-\eps/h)n(\phi)$. In this neighborhood, we have
\begin{align*}
|h(\phi)-h(\theta)|&< e^{-hk} \le e^{-h(1-\eps/h)n} = e^{-(h-\eps)n}=\\
2^{-((h-\eps)/\log 2)n}&=  2^{(h-\eps)/\log 2} (2^{-n-1})^{(h-\eps)/\log 2} <  2 \cdot |\phi-\theta|^{(h-\eps)/\log 2}
\;. 
\end{align*}
\end{proof}

\begin{proof}[Proof of the Main Theorem]
By Lemma~\ref{Lem:recurrent_angles}, non-recurrent angles have non-recurrent kneading sequences. If the kneading sequence $\nu(\theta)$ (or equivalently the associated polynomial if it exists) is non-renormalizable and has a finite Hubbard tree, then by Theorem~\ref{Thm:Hoelder_continuity} core entropy as a function of kneading sequences is locally H\"older continuous with exponent $h(\nu)-\eps/2$.
If $\nu$ is renormalizable with a base sequence with positive entropy, then Theorem~\ref{Thm:HoelderRenorm} yields H\"older continuity for kneading sequences with exponent $h(\nu)$. If $\nu$ is renormalizable and tree-finite such that all base sequences have zero entropy, H\"older continuity for kneading sequences with exponent $h(\nu)-\eps/2$ follows from Theorem~\ref{Thm:Hoelder_low}. In each case, Lemma~\ref{Lem:angles} yields local H\"older continuity for angles with exponent $(h(\theta)-\eps)/\log 2$. 
\end{proof}

\newpage

\end{document}